\newcommand{\harxiv}[1]{\href{http://arxiv.org/abs/#1}{\texttt{arXiv:#1}}}
\newcommand{\hyref}[2]{\hyperref[#2]{#1~\ref*{#2}}}
\newcommand{\hypageref}[1]{\hyperref[#1]{page~\pageref*{#1}}}
\def\clap#1{\hbox to 0pt{\hss#1\hss}}
\theoremstyle{plain}
\newtheorem{theorem}{Theorem}[section]
\newtheorem*{theorem*}{Theorem}
\newtheorem{lemma}[theorem]{Lemma}
\newtheorem{corollary}[theorem]{Corollary}
\newtheorem{proposition}[theorem]{Proposition}
\theoremstyle{definition}
\newtheorem{remark}[theorem]{Remark}
\newtheorem{example}[theorem]{Example}
\newtheorem*{naive-algorithm}{Na\"ive algorithm}
\newtheorem*{refined-algorithm}{Refined algorithm}
\newtheorem{definition}[theorem]{Definition}
\newcommand{\begintabularhammock}{ \smallskip\noindent\hspace{0.05\textwidth}
                                   \begin{tabular}{@{} p{0.15\textwidth} @{} p{0.80\textwidth} @{}} }
\newcommand{\IC}{{\mathbb{C}}}
\newcommand{\IR}{{\mathbb{R}}}
\newcommand{\IZ}{{\mathbb{Z}}}
\newcommand{\IN}{{\mathbb{N}}}
\renewcommand{\mod}[1]{\mathsf{mod}(#1)}
\DeclareMathOperator{\rep}{\mathsf{rep}}
\DeclareMathOperator{\Hom}{\mathrm{Hom}}
\DeclareMathOperator{\Ext}{\mathrm{Ext}}
\renewcommand{\setminus}{\backslash}
\DeclareMathAlphabet{\mathpzc}{OT1}{pzc}{m}{it}
\newcommand{\arrd}{ \ar@{-}[r] \ar@{=}[d] }
\newcommand{\too}{\longrightarrow}
\renewcommand{\iff}{~\Longleftrightarrow~}
\newcommand{\colmat}[2]{{\big(\genfrac{.}{.}{0pt}{1}{#1}{#2}\big) }}
\newcommand{\rowmat}[2]{\big(#1 \ #2\big)}
\renewcommand{\phi}{\varphi}
\renewcommand{\epsilon}{\varepsilon}
\newcommand{\arr}{\ar@{~}[r]}
\newcommand{\arrr}{\ar@{~}[rr]}
\newcommand{\arudd}{\ar[ur] \ar@{.}[dr]}
\newcommand{\aruu}{\ar@{.>}[uuurrr]}
\newcommand{\arurr}{\ar@{.>}[ur] \ar@{.}[rr]}
\newcommand{\bib}[6]{{\bibitem{#2} #3: {\emph{#4},} #5#6.}}
\tikzset{vertex/.style={circle,fill=black,inner sep=1pt,outer sep=2pt},
         tinyvertex/.style={font=\scriptsize,minimum size=6pt},
         smallvertex/.style={inner sep=1pt, font=\small},
         >=stealth',
         leadsto/.style={-angle 90,decorate,decoration=snake,very thick},
         cut/.style={decorate,decoration=saw,very thick}}
\tikzset{
    partial ellipse/.style args={#1:#2:#3}{
        insert path={+ (#1:#3) arc (#1:#2:#3)}
    }
}
\pgfplotsset{compat=1.15}
\def\temp{&} \catcode`&=\active \let&=\temp
\begin{document}

\title{An isometry theorem for persistent homology of circle-valued functions}

\author{Nathan Broomhead}
\author{Mariam Pirashvili}

\begin{abstract}
This paper explores persistence modules for circle-valued functions, presenting a new extension of the interleaving and bottleneck distances in this setting. We propose a natural generalisation of barcodes in terms of arcs on a geometric model associated to the derived category of quiver representations. The main result is an isometry theorem that establishes an equivalence between the interleaving distance and the bottleneck distance for circle-valued persistence modules.\end{abstract}

\begingroup\renewcommand\thefootnote{}%
\footnote{MSC 2020: 55N31, 16G20, 16G70}
%
\addtocounter{footnote}{-2}\endgroup

\maketitle

\vspace{-2ex}

{\small
\setcounter{tocdepth}{1}
\tableofcontents
}

\vspace{-4ex}

\addtocontents{toc}{\protect{\setcounter{tocdepth}{-1}}}  

\section*{Introduction}
\addtocontents{toc}{\protect{\setcounter{tocdepth}{1}}}   

Persistent homology, introduced in \cite{Edel}, is a central concept in topological data analysis, providing a robust framework for understanding the shape and structure of data through the lens of topology. By examining how topological features emerge and persist across different scales, it can capture important patterns in complex, high-dimensional data. It is particularly useful in situations where data is non-linear and so linear approximations are not appropriate. For this reason, it has found widespread use in a variety of disciplines, ranging from biology and medicine to materials science, offering insight into geometric features of data that may not be immediately obvious through traditional statistical methods.
\subsection*{Classical persistence}
In classical persistent homology, a set of topological spaces $\{X_t\}_{t \in\IR}$, is constructed from the data, with the property that there exist maps $X_t \hookrightarrow X_{t'}$ whenever $t\leq t'$. For example, if there is a continuous function $f:X \to \IR$ from some topological space $X$, then we might consider the sublevel filtration where $X_t = f^{-1}(-\infty,t]$ and the map $X_t \hookrightarrow X_{t'}$ is the natural subspace inclusion. This induces a filtration of homology groups in each degree, with coefficients in a field $\mathbf{k}$. Under certain tame conditions, there are only finitely many `singular' values of the real parameter $t$ at which the homology groups change, and so for each $n \in \IZ_{\geq 0}$ there is a finite sequence of vector spaces with linear maps \[ H_n(X_{t_1}, \mathbf{k}) \to  H_n(X_{t_2}, \mathbf{k}) \to  H_n(X_{t_3}, \mathbf{k}) \to \dots \to H_n(X_{t_{k-1}}, \mathbf{k}) \to H_n(X_{t_k}, \mathbf{k}). \]
This is called a \textit{persistence module} and is a representation of the equioriented quiver of type~$A_k$:
\begin{center}
\definecolor{ttqqqq}{rgb}{0,0,0}
\begin{tikzpicture}[line cap=round,line join=round,>=triangle 45,x=1.3cm,y=1.3cm]

\begin{scope}[ decoration={
    markings,
    mark=at position 0.6 with {\arrow{>}}}
    ] 
\draw[postaction={decorate}] (3,2) -- (4,2);
\draw[postaction={decorate}] (4,2) -- (5,2);
\draw[postaction={decorate}] (7,2) -- (8,2);
\end{scope}

\draw [fill=ttqqqq] (3,2) circle (2.5pt);
\draw[color=ttqqqq] (3.,2.42) node {$1$};
\draw [fill=ttqqqq] (4,2) circle (2.5pt);
\draw[color=ttqqqq] (4.,2.42) node {$2$};
\draw [fill=ttqqqq] (5,2) circle (2.5pt);
\draw[color=ttqqqq] (5.,2.42) node {$3$};
\draw [fill=ttqqqq] (7,2) circle (2.5pt);
\draw[color=ttqqqq] (7.,2.42) node {$k-1$};
\draw [fill=ttqqqq] (8,2) circle (2.5pt);
\draw[color=ttqqqq] (8.,2.42) node {$k$};
\draw [fill=ttqqqq] (6,2) node {$\dots$};
\end{tikzpicture}
\end{center}

An important result due to Gabriel \cite{Gabriel}, shows that such a representation decomposes uniquely as a direct sum of `interval representations'. We can therefore associate to any persistence module a multi-set of intervals of the form $[a,b)$ for some $1 \leq a < b \leq k+1$. The longer intervals correspond to topological features that are said to `persist' longer. This multi-set is referred to as the barcode of the persistence module and it is a computer-friendly summary of the topological structure of a data set. See Figure~\ref{a3 figure} for an example of a barcode in the context of sublevel persistence. A persistence diagram displays the same information in a graphical format. There is a (pseudo)-metric on the space of persistence modules called the interleaving distance $d_I$ introduced in \cite{Chaz} and a metric on the space of barcodes called the bottleneck distance $d_B$ (see \cite{Bottle}). In applications, these allow the topological features of two data sets to be compared in terms of a scalar. There are a number of `algebraic stability' results comparing these distances (see \cite{Chaz, BotLes, Les}) culminating in an isometry theorem which proved that the map $\mathcal{B}$ that takes a persistence module $M$ to its corresponding barcode $\mathcal{B}(M)$ is an isometry. That is, for any persistence modules $M,N$ then \[ d_I(M,N) = d_B(\mathcal{B}(M), \mathcal{B}(N)).\] In practice, this is important, as the bottleneck distance is easier to compute, while the interleaving distance can be considered more fundamental as it has a universality property \cite{Les}.

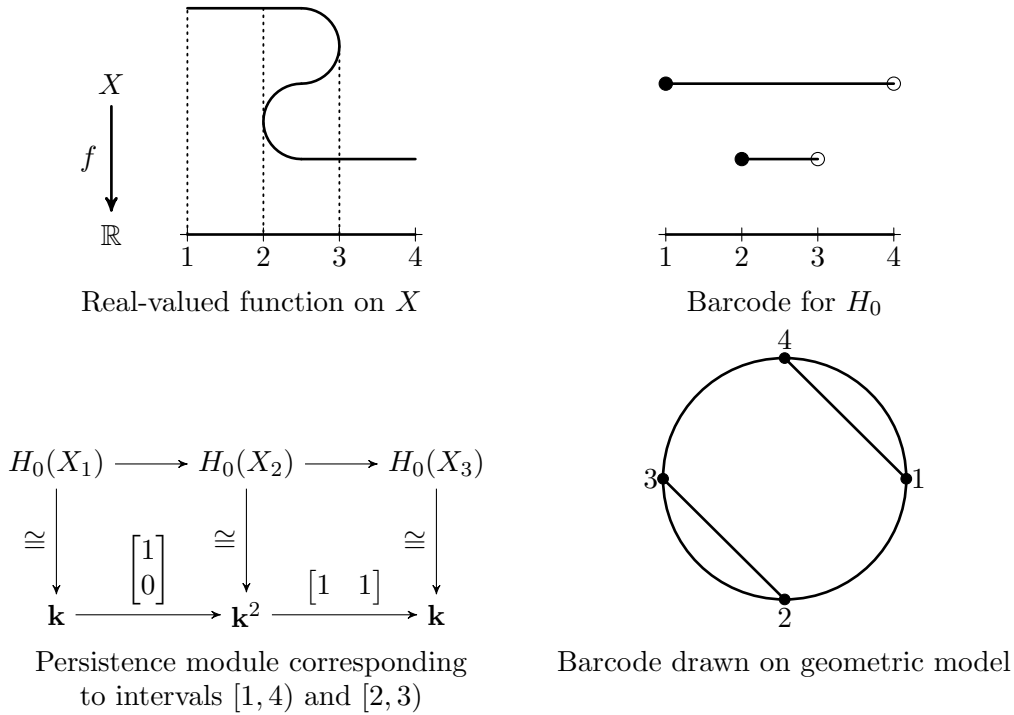
\begin{figure}[ht]
 \begin{center}
 \begin{tabular}{ccc}
 \definecolor{ttqqqq}{rgb}{0,0,0}
\begin{tikzpicture}[line cap=round,line join=round,x=1cm,y=1cm]
\draw [line width=1pt,color=ttqqqq] (2,3)-- (5,3);
\draw [line width=1pt,color=ttqqqq] (2,6)-- (3.5,6);
\draw [shift={(3.5,5.5)},line width=1pt,color=ttqqqq]  plot[domain=-1.5707963267948966:1.5707963267948966,variable=\t]({1*0.5*cos(\t r)+0*0.5*sin(\t r)},{0*0.5*cos(\t r)+1*0.5*sin(\t r)});
\draw [shift={(3.5,4.5)},line width=1pt,color=ttqqqq]  plot[domain=1.5707963267948966:4.71238898038469,variable=\t]({1*0.5*cos(\t r)+0*0.5*sin(\t r)},{0*0.5*cos(\t r)+1*0.5*sin(\t r)});
\draw [line width=1pt,color=ttqqqq] (3.5,4)-- (5,4);
\draw [->,line width=1pt,color=ttqqqq] (1.,4.7) -- (1.,3.3);
\draw [line width=0.7pt,dotted,color=ttqqqq] (2,3)-- (2,6);
\draw [line width=0.7pt,dotted,color=ttqqqq] (3,6)-- (3,3);
\draw [line width=0.7pt,dotted,color=ttqqqq] (4,5.5)-- (4,3);
\draw [color=ttqqqq] (2,3)-- ++(-2.5pt,0 pt) -- ++(5pt,0 pt) ++(-2.5pt,-2.5pt) -- ++(0 pt,5pt);
\draw[color=ttqqqq] (2.,2.7) node {$1$};
\draw [color=ttqqqq] (3,3)-- ++(-2.5pt,0 pt) -- ++(5pt,0 pt) ++(-2.5pt,-2.5pt) -- ++(0 pt,5pt);
\draw[color=ttqqqq] (3.,2.7) node {$2$};
\draw [color=ttqqqq] (4,3)-- ++(-2.5pt,0 pt) -- ++(5pt,0 pt) ++(-2.5pt,-2.5pt) -- ++(0 pt,5pt);
\draw[color=ttqqqq] (4.,2.7) node {$3$};
\draw [color=ttqqqq] (5,3)-- ++(-2.5pt,0 pt) -- ++(5pt,0 pt) ++(-2.5pt,-2.5pt) -- ++(0 pt,5pt);
\draw[color=ttqqqq] (5.,2.7) node {$4$};
\draw[color=ttqqqq] (0.7,4.0) node {$f$};
\draw[color=ttqqqq] (1.0,3.0) node {$\IR$};
\draw[color=ttqqqq] (1.0,5.0) node {$X$};
\end{tikzpicture} & &\begin{tikzpicture}[line cap=round,line join=round,x=1cm,y=1cm]
\draw [line width=1pt] (1,3)-- (4,3);
\draw [line width=1pt] (1,5)-- (4,5);
\draw [line width=1pt] (2,4)-- (3,4);
\draw [color=black] (1,3)-- ++(-2.5pt,0 pt) -- ++(5pt,0 pt) ++(-2.5pt,-2.5pt) -- ++(0 pt,5pt);
\draw[color=black] (1,2.7) node {$1$};
\draw [color=black] (4,3)-- ++(-2.5pt,0 pt) -- ++(5pt,0 pt) ++(-2.5pt,-2.5pt) -- ++(0 pt,5pt);
\draw[color=black] (4,2.7) node {$4$};
\draw [color=black] (2,3)-- ++(-2.5pt,0 pt) -- ++(5pt,0 pt) ++(-2.5pt,-2.5pt) -- ++(0 pt,5pt);
\draw[color=black] (2,2.7) node {$2$};
\draw [color=black] (3,3)-- ++(-2.5pt,0 pt) -- ++(5pt,0 pt) ++(-2.5pt,-2.5pt) -- ++(0 pt,5pt);
\draw[color=black] (3,2.7) node {$3$};
\draw [fill=black] (1,5) circle (2.5pt);
\draw [color=black] (4,5) circle (2.5pt);
\draw [fill=black] (2,4) circle (2.5pt);
\draw [color=black] (3,4) circle (2.5pt);
\end{tikzpicture} \\
 Real-valued function on $X$ &  & Barcode for $H_0$ \\
\begin{tikzpicture}[scale=1]

    \node (A) at (0, 2) {$H_0(X_1)$};
    \node (B) at (2.5, 2) {$H_0(X_2)$};
    \node (C) at (5, 2) {$H_0(X_3)$};
    
    \node (D) at (0, 0) {$\mathbf{k}$};
    \node (E) at (2.5, 0) {$\mathbf{k}^2$};
    \node (F) at (5, 0) {$\mathbf{k}$};
    
    \draw[->] (A) -- (B) node[midway, above] {$\phantom{.}$};
    \draw[->] (B) -- (C) node[midway, above] {$\phantom{.}$};
    \draw[->] (D) -- (E) node[midway, above] {$\begin{bmatrix} 1 \\ 0 \end{bmatrix}$};
    \draw[->] (E) -- (F) node[midway, above] {$\begin{bmatrix} 1 & 1 \end{bmatrix}$};
    
    \draw[->] (A) -- (D) node[midway, left] {$\cong$};
    \draw[->] (B) -- (E) node[midway, left] {$\cong$};
    \draw[->] (C) -- (F) node[midway, left] {$\cong$};
\end{tikzpicture} & & \definecolor{wewdxt}{rgb}{0.,0,0.}
\begin{tikzpicture}[line cap=round,line join=round,>=triangle 45,x=0.8cm,y=0.8cm]
\draw [line width=1pt] (0,0) circle (0.8*2cm);
\draw [line width=1pt] (2,0)-- (0,2);
\draw [line width=1pt] (0,-2)-- (-2,0);
\draw [fill=wewdxt] (0,2) circle (2pt);
\draw[color=wewdxt] (0.,2.3) node {$4$};
\draw [fill=wewdxt] (0,-2) circle (2pt);
\draw[color=wewdxt] (0.,-2.3) node {$2$};
\draw [fill=wewdxt] (-2,0) circle (2pt);
\draw[color=wewdxt] (-2.2,0.) node {$3$};
\draw [fill=wewdxt] (2,0) circle (2pt);
\draw[color=wewdxt] (2.2,0.) node {$1$};
\end{tikzpicture} \\
 Persistence module corresponding  & &  Barcode drawn on geometric model \\
 to intervals $[1,4)$ and $[2,3)$ & &   
 \end{tabular}
 \end{center}
 \caption{Simple example showing sub-level persistent homology, the interval  decomposition and barcodes, for a topological space $X$ with a map to $\IR$}

 \label{a3 figure}
\end{figure}

\subsection*{Zig-zag persistence}
A key generalisation of classical persistent homology is the notion of zig-zag persistence \cite{Car}. Instead of considering a nested sequence of topological spaces, one instead considers a sequence of topological spaces in which the maps alternate in direction. This type of behaviour occurs naturally, for example when taking intersections or unions: 
\[ U \hookleftarrow U \cap V \hookrightarrow V  \qquad \text{or} \qquad U \hookrightarrow U \cup V \hookleftarrow V \]
where $U$ and $V$ are topological spaces.
Under tame conditions, persistence modules constructed in this setting are again representations of a quiver of type~$A_k$, but the quiver now has a `zig-zag' orientation of the arrows. Further generalisations allow for arbitrary orientations.

There are again algebraic stability theorems for zig-zag persistence. These require the notions of the interleaving and bottleneck distances to be defined to this setting. In the literature this has not been done directly, but there are various ways to induce distances using (higher dimensional analogues of) the classical case. For example, in \cite{BotLes}, Botnan and Lesnick introduce a version of these distances for certain multidimension persistence modules and then consider the corresponding distances induced by a fully-faithful functor from zig-zag persistence modules to $2$-d persistence modules.
They then prove an algebraic stability result, which was strengthened to an isometry theorem in \cite{Bjer}. A different approach is taken in \cite{zig}, where the authors use an equivalence between the derived category of representations of any zig-zag $A_n$ quiver and the derived category of representations of the equioriented $A_n$ quiver. This allows a derived version of an isometry theorem to be proved.

A further generalization, which includes zig-zag persistence as a special case, comes from studying ladders -- quivers with commutation relations built from copies of $A_n$  quivers stacked in layers. Such quivers with relations are representation-finite under strong size constraints \cite{EH}. 

 Zig-zag persistence and its generalisations, provide a more flexible approach than classical persistence, capturing the intricacies of topological features in settings where the data cannot be described by a simple filtration. Zig-zag persistence is also more readily generalisable, for example to circle-valued functions $f: X \to S^1$ which are the main focus of this paper.

\subsection*{Circle-valued persistence} Circle-valued functions provide a natural extension of real-valued functions, where instead of measuring values in a linear codomain such as $\IR$, the values lie on a circle. This opens up new possibilities for analysing data in settings where the underlying structure is periodic or has a direction associated to it. Circle-valued functions have found applications in areas, such as the analysis of periodic phenomena in time series data, understanding cyclic structures in biological systems (e.g., circadian rhythms), or understanding implications of wind direction in environmental sciences. 

There are existing results on circle-valued maps in the context of persistent homology. In particular, the work of Burghelea and Dey \cite{Dey} introduces barcodes and Jordan blocks for persistence modules by considering representations of a zig-zag cyclic quiver of type $\tilde{A_n}$. They propose an algorithm that computes these invariants for a given persistence module. Building on this, \cite{Haller} considers a version of the bottleneck distance and proves a geometric version of stability that relates this bottleneck distance to a metric on the space of tame maps from $X$ to $S^1$. 
We now explain this result in slightly more detail:

\noindent For tame scalar-valued functions $f, g: K\rightarrow \mathbb{R}$, the classical stability theorem asserts that for each homological degree $r$, the bottleneck distance between their persistence diagrams is bounded above by the uniform norm:

\[ d_B(Dgm_r(f), Dgm_r(g)) \leq \| f-g\|_\infty.\]
This implies that the map $f \mapsto Dgm_r(f)$ is continuous. It is a version of this statement that is generalised by Burghelea and Haller in \cite{Haller}.
To formulate their stability result, recall that any continuous map $f:K\to S^1$ determines an integral cohomology class $\xi_f\in H^1(K; \mathbb{Z})$ by pulling back a fixed generator of $H^1(S^1; \mathbb{Z})\cong\mathbb{Z}$. Homotopic maps $f_1,f_2:K\to S^1$ determine the same class $\xi_{f_1} = \xi_{f_2}$. In fact, for a simplicial complex, this assignment induces a bijection between the set of homotopy classes of maps $K\to S^1$ and $H^1(K; \mathbb{Z})$. 

Let $C(K, S^1)$ denote the space of continuous maps from $K$ to $S^1$, equipped with the compact open topology.  For a fixed class $\xi \in H^1(K; \mathbb{Z})$, let $C_\xi (K, S^1)$ be the connected component of maps corresponding to $\xi$, and let $C_{\xi,t} (K, S^1) \subseteq C_\xi (K, S^1)$ denote the subspace of tame maps.

Using level-set persistence, each circle-valued map $f$ determines a representation of a zig-zag quiver of type $\tilde{A}$ in each homological degree $r$. To this representation, the authors assign a configuration of points $C_r(f)$ on the complex plane, where each point corresponds to an indecomposable summand of the representation. They prove that the assignment

\[C_{\xi,t} (K, S^1)\ni f\mapsto C_r(f)\]

\noindent is continuous and extends to a continuous map $C_{\xi,t} (K, S^1) \to \mathbb{C}^{n-1}\times (\mathbb{C} \setminus 0)$.

They also reinterpret the classical geometric stability theorem of scalar-valued persistence in the circle-valued context. They first restrict the configurations $C_r(f)$ to consider only mixed (closed-open or open-closed) bars and denote this subset by $C_r^m(f)$. Geometric stability can then be reformulated as the statement that $f\mapsto C_r^m(f)$ is a continuous map.

In this paper, we instead look at the notion of algebraic stability. We extend the classical definition of interleaving distance to apply to persistence modules that are representations of any gentle hereditary algebra. Our new definition encompasses all orientations of the $A_n$ quiver, including both classical and zig-zag persistence, as well as all non-cyclic orientations of the $\tilde{A}_n$ quiver, which covers circle-valued persistence. We use the structure of the Auslander-Reiten quiver and generalise the $\delta$-shift using the Auslander-Reiten translation on the category.

We take a slightly different approach from \cite{Dey} and define barcodes as multisets of arcs and closed curves on the `geometric model' for the derived category.
The theory of geometric models has recently been developed for derived categories of any gentle algebra \cite{OPS} and they have attracted significant attention for their ability to concisely describe indecomposable objects and morphisms in the category. The geometric model for $A_n$ is a disc with $n+1$ marked boundary points, while that of $\tilde{A}_n$ is an annulus with marked points on both boundary components. 
In Example~\ref{ExampleBarcode}, we show that our notion of barcode naturally generalises the classical barcode, in which an interval $[a,b)$ corresponds to an arc between the vertices $a$ and $b$ on the disc (see Figure~\ref{a3 figure}). We then define a bottleneck distance on these barcodes using simple operations on the geometric model. 

Notably, our definitions of interleaving and bottleneck distances can be computed directly from persistence modules and barcodes, respectively, without the need to lift to multidimensional persistence or rely on equivalences to relate the modules to equioriented ones. Using these definitions we prove a strong isometry theorem for persistence modules of type $\tilde{A}$.

\begin{theorem*}[Theorem~\ref{isometry}]  Suppose $M,N$ are persistence modules of type $\tilde{A}$. Then 
\[ d_I(M,N) = d_B(\mathcal{B}(M), \mathcal{B}(N)).\]
    \end{theorem*}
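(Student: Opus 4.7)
The plan is to prove the isometry in the standard two-step fashion, establishing the two inequalities $d_B(\mathcal{B}(M), \mathcal{B}(N)) \leq d_I(M,N)$ and $d_I(M,N) \leq d_B(\mathcal{B}(M), \mathcal{B}(N))$ separately. The entire argument is organised around passing back and forth between algebraic data (an interleaving of representations of a non-cyclic $\tilde{A}_n$ quiver) and geometric data (arcs and closed curves on the annulus associated to the gentle algebra via the geometric model of \cite{OPS}), with the Auslander--Reiten translation $\tau$ serving as the generalised $\delta$-shift.

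For the stability direction I would start from a $\delta$-interleaving, i.e.\ morphisms $f\colon M\to \tau^{-\delta} N$ and $g\colon N\to \tau^{-\delta} M$ with the usual hexagon commutativity (formulated with the AR-translation in place of the classical shift). Since $M$ and $N$ decompose into indecomposables that correspond bijectively to arcs and closed curves on the annulus, $f$ and $g$ decompose as direct sums of morphisms between these geometric objects. Using the dictionary between $\Hom$-spaces in the derived category of a gentle algebra and geometric intersections/crossings from \cite{OPS}, I would extract from $f$ and $g$ a partial matching: each summand arc $\alpha$ of $M$ whose $f$-component is non-zero must map to a summand arc of $N$ lying within geometric distance $\delta$ on the annulus, and any summand that is unmatched this way must itself have length at most $\delta$ (otherwise the interleaving condition would force a non-trivial map back). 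This yields a matching of cost at most $\delta$.

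For the converse direction I would take a $\delta$-matching $\sigma$ between $\mathcal{B}(M)$ and $\mathcal{B}(N)$ and build an interleaving summand by summand. For each matched pair of arcs, the geometric model supplies a canonical (up to scalar) morphism from the corresponding indecomposable to the $\tau^{-\delta}$-shift of its partner; such a morphism exists precisely because the two arcs lie at geometric distance at most $\delta$ after translation, which is exactly the condition encoded in the bottleneck matching. For unmatched summands the assigned interleaving component is zero, consistent with the fact that a summand of geometric length $\leq\delta$ admits only the zero map to its own $\tau^{-2\delta}$-shift. Assembling these pieces produces morphisms $f$ and $g$, and the hexagon identity follows from naturality of $\tau$ together with the compatibility of the composition rule on the geometric model with direct sums.

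The hardest part, and the new feature not present in the classical or zig-zag $A_n$ case, will be handling the closed curves on the annulus, i.e.\ the band-type summands (the analogues of Burghelea--Dey's Jordan blocks). Under $\tau$ these families are preserved, but with an internal shift in the band parameter, so a valid matching must respect both the homotopy class (winding number) of the closed curve and the dimension/Jordan-block multiplicity data. I would need to verify, first, that the bottleneck distance defined via the geometric model assigns the correct cost to pairs of closed curves with possibly different multiplicities, and second, that any interleaving between persistence modules induces a compatible matching on the band components: this uses the fact that no morphism can shift a band summand off itself onto an arc summand, so the closed-curve part of a $\delta$-interleaving is self-contained. Once the arc case and the closed-curve case are both settled, combining them gives the two inequalities and hence the isometry.
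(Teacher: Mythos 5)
Your overall architecture is right and matches the paper's: prove the two inequalities separately, decompose both persistence modules and barcodes according to the Auslander--Reiten components (preprojective, regular, preinjective), interpret everything on the geometric model, and treat the regular (tube/band) part as the genuinely new case. The converse direction, building a $\delta$-interleaving from a $\delta$-matching by concatenating irreducible morphisms in the AR-quiver along the path prescribed by the $s,t$ operations, and using the mesh relations to verify the hexagon identity, is essentially what the paper does in Propositions~\ref{nonregisom} and its regular analogue.

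The genuine gap is in the stability direction. You write that from $f\colon M\to\tau^{-\delta}N$ and $g\colon N\to\tau^{-\delta}M$ one can ``extract a partial matching: each summand arc $\alpha$ of $M$ whose $f$-component is non-zero must map to a summand arc of $N$ lying within geometric distance $\delta$.'' Having a nonzero component $f_{ji}\colon M_i\to\tau^{-\delta}N_j$ only gives a binary \emph{relation} between summands of $M$ and of $N$, not an injective matching: many $M_i$'s could map nontrivially to the same $N_j$, and nothing in the interleaving condition alone prevents this. Producing an honest matching is the hard combinatorial content, and the paper deploys two separate mechanisms to do it. For the preprojective/preinjective components it forms a bipartite graph $G(M,N)$ whose edges encode the condition $M_i\leq\tau^{-\delta}N_j\leq\tau^{-2\delta}M_i$, applies a functor $\mathcal{F}$ to $\operatorname{vect}_{\mathbf k}$ that records the coefficients in the basis of compositions of irreducibles, and uses a rank argument on $\mathcal{F}(\Phi^{2\delta}_{M_I})$ to verify Hall's condition, yielding a perfect matching via Hall's Marriage theorem (Lemma~\ref{Halls}). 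For the regular components it follows Bauer--Lesnick: factor $f$ as $M\twoheadrightarrow\operatorname{Im}f\hookrightarrow\tau^{-\delta}N$, prove a structure theorem for uniserial tubes (Theorem~\ref{StructureThm}) relating $\hom(S,-)$ and $\hom(-,S)$ for quasi-simples $S$ to the canonical injections ordered by length, and then check both that the induced matching covers $(\mathcal{B}_M)_{2\delta}$ and $(\mathcal{B}_N)_{2\delta}$ and that matched arcs are $\delta$-equivalent by tracking positions along corays. Your proposal does not identify either of these counting arguments, and without them the claim that a nonzero component ``must map to'' a partner at distance $\delta$ is not justified and the construction of a $\delta$-matching does not go through.

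Two smaller points: you attribute the difficulty to ``closed curves,'' i.e.\ the homogeneous band summands; in fact all of the regular part (including the rank-$p$ and rank-$q$ tubes, whose objects correspond to arcs with both endpoints on one boundary component) requires the induced-matching machinery, and the homogeneous tubes are in many ways the simplest because $\tau$ acts trivially there. Also, the Hom-orthogonality between components, which you invoke implicitly when saying the band part is ``self-contained,'' is what justifies the component-wise reduction (Lemma~\ref{componentwise} in the paper), so it is worth stating it explicitly as a structural lemma rather than folding it into the band discussion.
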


\subsection*{Other related constructions}
Beyond the frameworks discussed above, several other constructions have been developed to generalise classical persistence, and versions of stability have also been considered in many of these settings. For instance, representations of posets that are not discrete -- such as the totally ordered real line $(\IR, \leq)$ -- have been considered, based on foundational work of Crawley-Bowvey \cite{CB}. Related ideas appear in the theory of continuous quivers, introduced in \cite{IRT, HR} and there is an isometry theorem for continuous quivers of type $A$ \cite{O}. Based on this result, an isometry theorem for cyclically oriented continuous quivers of type $\tilde{A}$ was recently presented in \cite{GZ}. The quivers that we consider in this paper are finite and not cyclically oriented, and so our results are complementary.

In a different direction, generalized persistence modules have been formalised as functors from a preordered set to a target category, as in \cite{BSS}. Many notions of persistence, including classical and zig-zag can be considered as special cases of this, with a specific choice of preordered set and target category. There is a general notion of interleaving distance and, using this, an isometry theorem is proved in \cite{MM} for a subclass of examples. This subclass does not include the examples studied in this paper.

\subsection*{Structure of the paper} The first sections cover key material that is necessary to understand the main results and their proof. We aim to make this as accessible as possible to those without a background in representation theory or derived categories. 
Sections~\ref{PrelimQuiver} and \ref{PrelimDerived} introduce definitions and notation from quiver representation theory and derived categories respectively. In particular, they include structural results on the Auslander-Reiten quivers in the $A_n$ and $\tilde{A}_n$ cases. 

Section~\ref{Interleaving} begins by recalling the classical definition of $\delta$-interleaving and then proposes a more generally applicable definition. Proposition~\ref{InterleavingCorresp} shows how this extends the classical case. Proposition~\ref{Interleavingmetric} then shows that the interleaving distance is a (possibly infinite) metric. Section~\ref{GeometricM} gives a short introduction to the geometric models that are used in our definition of barcodes. Operations are defined which are analogous to moving the endpoints of an interval in a classical barcode. Section~\ref{Bottleneck} begins by recalling the classical definition of $\delta$-matching of interval modules. This is then extended to barcodes in the $\tilde{A}_n$ case (Definition~\ref{matching}). Lemma~\ref{bottlemetric} shows that the bottleneck distance is a (possibly infinite) metric. 
Sections~\ref{StabilityNonreg} and \ref{StabilityReg} contain the proof of the main isometry theorem, considering first non regular and then regular persistence modules. 

Section~\ref{BurgheleaBackground} acts as a short dictionary that compares our barcodes with the barcode and Jordan blocks from \cite{Dey}. 
Finally, Section~\ref{Examples} illustrates the theory developed in this paper with some examples.

%

\section{Preliminaries from representation theory}\label{PrelimQuiver}
\noindent
Let $\mathbf{k}$ denote an algebraically closed field. In this paper all vector spaces, algebras and categories are assumed to be $\mathbf{k}$-linear unless otherwise stated. We consider finite dimensional path algebras of quivers $Q$ of type $A$ or type $\tilde{A}$, so the underlying graph is either linear or a cycle respectively. In the cycle case, we assume that the edges of $Q$ are not cyclically oriented, so the path algebra $\mathbf{k}Q$ is finite dimensional over $\mathbf{k}$. A representation $(M, \varphi)$ of a quiver $Q$, consists of a vector space $M_x$ for each vertex $x$ of $Q$ and linear maps $\varphi_\alpha: M_x \to M_y$ for each arrow $\alpha: x \to y$ in $Q$. The definitions and results from this section can be found in many reference texts such as \cite{ASS}.

\subsection{The module category}
We denote by $\mod{\mathbf{k}Q}$ the category of finitely generated right $\mathbf{k}Q$-modules over $\mathbf{k}$, and by  $\rep_{\mathbf{k}}(Q)$ the category of finite dimensional representations of the quiver $Q$. There is a well-known equivalence of categories
\[F: \mod{\mathbf{k}Q} \stackrel{\cong}{\too} \rep_{\mathbf{k}}(Q) \]
and so we will often move between these categories without comment. 

These categories are $\mathbf{k}$-linear, abelian, 
hereditary (so they have global dimension $0$ or $1$) and Krull-Schmidt. This final property means that every object in the category decomposes as a finite direct sum of indecomposable objects, and furthermore, this decomposition is unique up to permutation and isomorphism of the summands. 

\subsection{The Auslander-Reiten theory} 
Auslander-Reiten theory \cite{AR} is a powerful tool which allows certain computations and structures to be seen very explicitly in the categories. In this article, we will state and use some key properties, but we refer the reader to \cite{ARS} for a comprehensive treatment.

\subsubsection{The Auslander-Reiten quiver}
We can draw a (possibly infinite) directed graph which captures a lot of information about $\mod{\mathbf{k}Q}$. This is called the Auslander-Reiten quiver of the category. It has vertices corresponding to isomorphism classes of indecomposable objects and arrows corresponding to irreducible morphisms. More precisely, if $M,N$ are indecomposable objects then there is an arrow $M \too N$, for each element in a basis for the $\mathbf{k}$-vectorspace of irreducible morphisms from $M$ to $N$ (these are non-invertible homomorphisms which don't admit a non-trivial factorisation). 

\subsubsection{The Auslander-Reiten translate}
There is a functor $\tau$ called the Auslander-Reiten translate which will play an important role in our theory. 
$\tau$ commutes with finite direct sums, so we can understand how it acts on an arbitrary module by looking at the action on indecomposable summands.
\begin{proposition}[\cite{ASS}, Proposition~2.10] \label{tauProperties}
Let $M$ and $N$ be indecomposable modules in $\mod{\mathbf{k}Q}$. Then:
\begin{itemize}
    \item $\tau M$ is zero if and only if $M$ is a projective module.
    \item $\tau^{-1} N$ is zero if and only if $N$ is an injective module.
    \item If $M$ is not projective, then $\tau M$ is indecomposable and not injective, and $\tau^{-1}\tau M \cong M$.

    
    \item If $N$ is not injective, then $\tau^{-1} N$ is indecomposable and not projective, and $\tau\tau^{-1} N \cong N$.
    \item If $M,N$ are not projective, then $M \cong N$ if and only if $\tau M \cong \tau N$.
    \item If $M,N$ are not injective, then $M \cong N$ if and only if $\tau^{-1} M \cong \tau^{-1} N$.
\end{itemize}
\end{proposition}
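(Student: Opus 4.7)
The plan is to derive all six bullets from the standard construction $\tau = D\circ\mathrm{Tr}$, where $D = \Hom_{\mathbf{k}}(-,\mathbf{k})$ is the duality between $\mod{\mathbf{k}Q}$ and $\mod{\mathbf{k}Q^{op}}$ and $\mathrm{Tr}$ is the Auslander--Reiten transpose. Recall that for an indecomposable $M$ with minimal projective presentation $P_1 \xrightarrow{d} P_0 \to M \to 0$, one sets $\mathrm{Tr}(M) = \coker\!\bigl(d^* = \Hom(d, \mathbf{k}Q)\bigr)$; symmetrically $\tau^{-1} = \mathrm{Tr}\circ D$. Since $D$ is an exact duality that swaps projectives and injectives and preserves indecomposability, every bullet about $\tau^{-1}$ and injectives will be the formal dual of one about $\tau$ and projectives, so I would prove the $\tau$-side only.

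For bullets one and two: $\tau M = 0$ iff $\mathrm{Tr}(M) = 0$ iff $d^*$ is surjective. Minimality of the presentation rules out any projective summand of $P_0$ mapping onto a projective summand of $P_1$, so surjectivity forces $P_1 = 0$, equivalently $M$ projective. The injective version is dual via $D$.

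For bullets three and four I would invoke the central Auslander--Reiten duality: $\mathrm{Tr}$ descends to an equivalence between the stable categories of non-projective modules over $\mathbf{k}Q$ and $\mathbf{k}Q^{op}$, with $\mathrm{Tr}\circ\mathrm{Tr} \cong \mathrm{id}$. Crucially, the hereditary hypothesis means the minimal presentation is a short exact sequence $0 \to P_1 \to P_0 \to M \to 0$, so $\mathrm{Tr}(M)$ is a genuine module rather than an object defined only up to projective summands, and indecomposability is preserved on the nose. Composing with $D$ gives that $\tau M$ is non-injective indecomposable when $M$ is non-projective indecomposable, and the inverse identity follows from
\[ \tau^{-1}\tau M \;=\; \mathrm{Tr}\,D\,D\,\mathrm{Tr}(M) \;\cong\; \mathrm{Tr}\,\mathrm{Tr}(M) \;\cong\; M. \]
Bullets five and six are immediate corollaries: applying $\tau^{-1}$ to $\tau M \cong \tau N$ yields $M \cong N$ using the inverse relations just established, and the forward implication is functoriality of $\tau$.

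The main obstacle is really only the Auslander--Reiten duality underlying step two, which must verify that $\mathrm{Tr}$ is independent of the choice of projective presentation (up to projective summands) and that it induces a duality on stable categories. For an arbitrary finite-dimensional algebra this is a nontrivial piece of homological bookkeeping, but in the hereditary setting used here the projective presentation degenerates to a short exact sequence and the arguments become very clean, which is why the statement can be cited as a single proposition without additional work specific to the quivers of types $A$ and $\tilde{A}$.
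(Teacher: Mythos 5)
The paper does not prove this proposition; it is cited directly as \cite{ASS}, Proposition~2.10 (in fact, these are the standard properties of the Auslander--Reiten translate assembled in that reference), so there is no in-paper argument to compare your proof against. Your proof via $\tau = D\circ\mathrm{Tr}$ is exactly the textbook route that \cite{ASS} and \cite{ARS} take, and it is essentially correct. Two small remarks on precision: first, in the vanishing step, the cleanest way to see that surjectivity of $d^*$ forces $P_1=0$ is to observe that $d^*$ onto the projective $P_1^*$ splits, hence $d$ is a split monomorphism, which is incompatible with the minimality condition $\image d\subseteq\operatorname{rad}P_0$ unless $P_1=0$; your phrasing about ``projective summands mapping onto projective summands'' gestures at this but is a bit loose. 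Second, the preservation of indecomposability by $\mathrm{Tr}$ and the relation $\mathrm{Tr}\circ\mathrm{Tr}\cong\mathrm{id}$ on the stable category are facts that hold for any Artin algebra, not specifically hereditary ones; what the hereditary hypothesis actually buys you is the cleaner closed-form $\tau M\cong D\Ext^1_{\mathbf{k}Q}(M,\mathbf{k}Q)$ coming from the presentation degenerating to a short exact sequence. Neither point affects the validity of your argument, which correctly recovers all six bullets by duality and stable equivalence.
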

\noindent In particular, the Auslander-Reiten translate induces a bijection between the indecomposable objects that are not projective, and the indecomposable objects that are not injective.

\noindent For any indecomposable module $M$ that is not projective, and any indecomposable module $N$ that is not injective, there are short exact sequences:
\begin{equation}
 0 \too \tau M \stackrel{\scriptsize \colmat{f_1}{f_2}}{\too} E_1 \oplus E_2 \stackrel{\scriptsize \rowmat{g_1}{g_2}}{\too} M \too 0 \quad \text{ and } \quad  0 \too N \stackrel{\scriptsize \colmat{f_3}{f_4}}{\too} F_1 \oplus F_2 \stackrel{\scriptsize \rowmat{g_3}{g_4}}{\too} \tau^{-1} N \too 0 \end{equation}
in $\mod{\mathbf{k}Q}$ which are called `almost split' or `Ausander-Reiten' sequences. The fact that the middle term has at most two non-zero indecomposable summands holds because we are considering cases where $Q$ is of type $A$ or type $\tilde{A}$. Depending on the choice of $M$ and $N$, it is possible that one of these summands is zero. The morphisms $f_i$ and $g_j$ in these sequences are irreducible, and there are commutative squares that can be seen (up to scaling by elements in $\mathbf{k}^*$) in the Auslander-Reiten quiver. Such squares are referred to as a `mesh', and the commutation relations are called `mesh relations' \cite[Section~IV.4]{ASS}.
\begin{figure}[h]
\centering
\definecolor{ududff}{rgb}{0,0,0}
\begin{tikzpicture}[line cap=round,line join=round,>=triangle 45,x=1cm,y=1cm]
\draw [->,line width=1pt] (2.2,2.2) -- (2.8,2.8);
\draw [->,line width=1pt] (3.2,2.8) -- (3.8,2.2);
\draw [->,line width=1pt] (2.2,1.8) -- (2.8,1.2);
\draw [->,line width=1pt] (3.2,1.2) -- (3.8,1.8);
\draw [fill=ududff] (2,2) node {$\tau M$};
\draw [fill=ududff] (3,3) node {$E_1$};
\draw [fill=ududff] (3,1) node {$E_2$};
\draw [fill=ududff] (4,2) node {$M$};
\begin{scriptsize}
\draw[color=black] (2.35,2.7) node {$f_1$};
\draw[color=black] (3.6,2.7) node {$g_1$};
\draw[color=black] (2.35,1.3) node {$f_2$};
\draw[color=black] (3.6,1.3) node {$-g_2$};
\end{scriptsize}

\draw [->,line width=1pt] (6.2,2.2) -- (6.8,2.8);
\draw [->,line width=1pt] (7.2,2.8) -- (7.8,2.2);
\draw [->,line width=1pt] (6.2,1.8) -- (6.8,1.2);
\draw [->,line width=1pt] (7.2,1.2) -- (7.8,1.8);
\draw [fill=ududff] (6,2) node {$N$};
\draw [fill=ududff] (7,3) node {$F_1$};
\draw [fill=ududff] (7,1) node {$F_2$};
\draw [fill=ududff] (8,2) node {$\tau^{-1} N$};
\begin{scriptsize}
\draw[color=black] (6.35,2.7) node {$f_3$};
\draw[color=black] (7.6,2.7) node {$g_3$};
\draw[color=black] (6.35,1.3) node {$f_4$};
\draw[color=black] (7.6,1.3) node {$-g_4$};
\end{scriptsize}

\end{tikzpicture}
\end{figure}

We now look at the type $A$ and type $\tilde{A}$ cases in turn.

\subsection{The module categories of type $A$}
Let $Q$ be a quiver of type $A_n$ with any orientation of the edges. We label the vertices $1, \dots ,n$ such that $Q$ is some orientation of the graph
\begin{center}
\definecolor{ttqqqq}{rgb}{0,0,0}
\begin{tikzpicture}[line cap=round,line join=round,>=triangle 45,x=1.3cm,y=1.3cm]

\begin{scope}[ thick,decoration={
    markings,
    mark=at position 0.6 with {\arrow{>}}}
    ] 
\draw[] (3,2) -- (4,2);
\draw[] (4,2) -- (5,2);
\draw[] (7,2) -- (8,2);
\draw[] (8,2) -- (9,2);
\end{scope}

\draw [fill=ttqqqq] (3,2) circle (2.5pt);
\draw[color=ttqqqq] (3.,2.42) node {$1$};
\draw [fill=ttqqqq] (4,2) circle (2.5pt);
\draw[color=ttqqqq] (4.,2.42) node {$2$};
\draw [fill=ttqqqq] (5,2) circle (2.5pt);
\draw[color=ttqqqq] (5.,2.42) node {$3$};
\draw [fill=ttqqqq] (7,2) circle (2.5pt);
\draw[color=ttqqqq] (7.,2.42) node {$n-2$};
\draw [fill=ttqqqq] (8,2) circle (2.5pt);
\draw[color=ttqqqq] (8.,2.42) node {$n-1$};
\draw [fill=ttqqqq] (9,2) circle (2.5pt);
\draw[color=ttqqqq] (9.,2.42) node {$n$};
\draw [fill=ttqqqq] (6,2) node {$\dots$};
\end{tikzpicture}
\end{center}

\noindent For any $ 1 \leq a \leq b \leq n+1$ we define an interval $[a,b) = \{c \in \IZ \mid a \leq c < b \} \subset \{1, \dots, n\}$. 

\begin{definition}
The interval representation $V_{[a,b)}$ is defined to have a one-dimensional vector space at each vertex in the interval $[a,b)$ and zero dimensional vector spaces at all other vertices.
The linear map corresponding to an arrow is an isomorphism if the domain and codomain are both 1-dimensional and zero otherwise.
\end{definition}

\noindent There is a famous theorem due to Gabriel \cite{Gabriel}.
\begin{theorem}[Gabriel '72] \label{Gabriel}
    A representation of $Q$ is indecomposable if and only if it isomorphic to an interval representation. \end{theorem}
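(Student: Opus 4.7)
The plan is to prove both directions separately, leveraging the machinery just introduced.

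\textbf{Easy direction.} I would first show that each interval representation $V_{[a,b)}$ is indecomposable by computing its endomorphism ring. A morphism $\varphi\colon V_{[a,b)} \to V_{[a,b)}$ is determined by choosing a scalar $\lambda_i \in \mathbf{k}$ at each vertex $i \in [a,b)$, and the commutativity constraint with each arrow (whose corresponding map is an isomorphism $\mathbf{k} \to \mathbf{k}$) forces all $\lambda_i$ to coincide. Hence $\End(V_{[a,b)}) \cong \mathbf{k}$ is local, so $V_{[a,b)}$ is indecomposable.

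\textbf{Hard direction: every indecomposable is an interval representation.} Let $M = (M_i, \varphi_\alpha)$ be an indecomposable finite dimensional representation of $Q$. I would proceed in two steps.

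First, I would show the \emph{support} $\supp(M) = \{i \mid M_i \neq 0\}$ is an interval $[a,b) \subseteq \{1,\dots,n\}$. Since $Q$ is of type $A_n$, connected subsets of the underlying graph are exactly intervals, so it suffices to show the support is connected. If it were not, the underlying graph would split as a disjoint union of subgraphs on which $M$ restricts to non-zero representations with no arrows between them, contradicting indecomposability.

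Second, on its support $[a,b)$, I would show that each $M_i$ is one-dimensional and each map $\varphi_\alpha$ (for $\alpha$ an arrow between two vertices in the support) is an isomorphism. The cleanest route is induction on $\dim M = \sum_i \dim M_i$ combined with a normal-form argument: pick any arrow $\alpha\colon i \to j$ inside the support and consider the linear map $\varphi_\alpha\colon M_i \to M_j$. Using bases adapted to the kernel, image, and a choice of complement, together with analogous adjustments at the other arrows incident to $i$ or $j$ (which can be performed inductively as one sweeps along the linear quiver), one can split off either a summand isomorphic to an interval representation or a summand supported at a single vertex. Indecomposability of $M$ then forces $\varphi_\alpha$ to be an isomorphism of one-dimensional spaces, reducing to the case where the conclusion is clear.

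\textbf{Alternative via Auslander-Reiten theory.} As a parallel path, one could instead use Proposition~\ref{tauProperties}: starting from any indecomposable $M$, iterate $\tau$; since $Q$ is of type $A_n$ (finite representation type), finitely many iterates $\tau^k M$ must produce a projective indecomposable $P(i)$ at some vertex $i$. The projective $P(i)$ is manifestly an interval representation, and a direct computation with the almost-split sequences shows that $\tau^{-1}$ applied to an interval representation (that is not injective) yields another interval representation whose endpoints are shifted in a combinatorial way determined by the orientation. Reversing the iteration then expresses $M$ as an interval representation.

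\textbf{Main obstacle.} The delicate point in the direct argument is showing that one-dimensionality at each vertex propagates along the quiver without producing a non-trivial decomposition; this is essentially where the linearity of the underlying graph of $A_n$ is crucial. In the AR-theoretic approach, the delicate point is verifying that applying $\tau^{-1}$ to an interval representation yields another interval representation, which requires a careful case analysis depending on the local orientation of arrows at the endpoints.
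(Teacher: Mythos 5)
The paper states this theorem with a citation to Gabriel's 1972 paper and gives no proof of its own, so there is no in-text argument to compare against. Your primary, direct argument is in substance the standard elementary proof: the support of an indecomposable is connected (hence an interval, since the underlying graph of $A_n$ is a path), and one then shows by a base-change/normal-form induction that every vector space in the support is one-dimensional and every structure map between them is an isomorphism. That outline is correct; the one refinement I would suggest is to run the induction from a leaf of the quiver (a vertex incident to a single arrow), so that each base-change only has to be compatible with one adjacent map at a time rather than with arrows on both sides of an interior vertex.

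Your alternative AR-theoretic route, as written, is circular. You appeal to $Q$ being of finite representation type to conclude that iterating $\tau$ on any indecomposable eventually reaches a projective, but finite representation type for $A_n$ --- equivalently, that the AR quiver is a single finite component obtained by knitting from the projectives, so every indecomposable is $\tau^{-k}P$ for some projective $P$ --- is precisely what Gabriel's theorem asserts. To make this route self-contained you would need to independently establish that the preprojective component exhausts all indecomposables, which requires genuinely additional input (a root-system or Coxeter-transformation argument, or a $\Hom$-finiteness argument showing nothing can live outside the knitted component). The step you flag as delicate --- that $\tau^{-1}$ of a non-injective interval representation is again an interval representation --- is in fact the easy, combinatorial part; the missing piece is the exhaustiveness claim.
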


\noindent This means that each vertex of the Auslander-Reiten quiver is described by an interval in $[1,n+1)$. 
In particular, the Auslander-Reiten quiver of  $\rep_{\mathbf{k}}(Q)$ is finite and can be `knitted' explicitly starting with the simple projective objects. Figure~\ref{A4ModuleCatExamples} shows two such examples. 

\begin{figure}[t]
\centering
\definecolor{zzttqq}{rgb}{0.6,0.2,0}
\definecolor{ttqqqq}{rgb}{0.2,0,0}
\begin{tikzpicture}[line cap=round,line join=round,>=triangle 45,x=1cm,y=1cm]
\fill[line width=0pt,color=ttqqqq,fill=ttqqqq,fill opacity=0.10000000149011612] (2.5,2) -- (3,1.5) -- (6.5,5) -- (6,5.5) -- cycle;
\fill[line width=0pt,color=ttqqqq,fill=ttqqqq,fill opacity=0.10000000149011612] (11.5,2) -- (12,1.5) -- (13.5,3) -- (12.5,4) -- (13.5,5) -- (13,5.5) -- (11.5,4) -- (12.5,3) -- cycle;
\fill[line width=0pt,color=zzttqq,fill=zzttqq,fill opacity=0.10000000149011612] (6,5.5) -- (5.5,5) -- (9,1.5) -- (9.5,2) -- cycle;
\fill[line width=0pt,color=zzttqq,fill=zzttqq,fill opacity=0.10000000149011612] (15,5.5) -- (14.5,5) -- (15.5,4) -- (14.5,3) -- (16,1.5) -- (16.5,2) -- (15.5,3) -- (16.5,4) -- cycle;
\draw [->,color=ttqqqq] (3.2,2.2) -- (3.8,2.8);
\draw [->, color=ttqqqq] (4.2,3.2) -- (4.8,3.8);
\draw [->, color=ttqqqq] (5.2,4.2) -- (5.8,4.8);
\draw [->, color=ttqqqq] (4.2,2.8) -- (4.8,2.2);
\draw [->, color=ttqqqq] (5.2,2.2) -- (5.8,2.8);
\draw [->, color=ttqqqq] (5.2,3.8) -- (5.8,3.2);
\draw [->, color=ttqqqq] (6.2,2.8) -- (6.8,2.2);
\draw [->, color=ttqqqq] (7.2,2.2) -- (7.8,2.8);
\draw [->, color=ttqqqq] (6.2,3.2) -- (6.8,3.8);
\draw [->, color=ttqqqq] (6.2,4.8) -- (6.8,4.2);
\draw [->, color=ttqqqq] (7.2,3.8) -- (7.8,3.2);
\draw [->, color=ttqqqq] (8.2,2.8) -- (8.8,2.2);
\draw [->, color=ttqqqq] (12.2,2.2) -- (12.8,2.8);
\draw [->, color=ttqqqq] (12.2,3.8) -- (12.8,3.2);
\draw [->, color=ttqqqq] (12.2,4.2) -- (12.8,4.8);
\draw [->, color=ttqqqq] (13.2,4.8) -- (13.8,4.2);
\draw [->, color=ttqqqq] (14.2,4.2) -- (14.8,4.8);
\draw [->, color=ttqqqq] (13.2,3.2) -- (13.8,3.8);
\draw [->, color=ttqqqq] (13.2,2.8) -- (13.8,2.2);
\draw [->, color=ttqqqq] (14.2,2.2) -- (14.8,2.8);
\draw [->, color=ttqqqq] (15.2,3.2) -- (15.8,3.8);
\draw [->, color=ttqqqq] (14.2,3.8) -- (14.8,3.2);
\draw [->, color=ttqqqq] (15.2,2.8) -- (15.8,2.2);
\draw [->, color=ttqqqq] (15.2,4.8) -- (15.8,4.2);

\begin{scope}[ thick,decoration={
    markings,
    mark=at position 0.6 with {\arrow{>}}}
    ] 
    \draw[postaction={decorate}] (7,0.5) -- (9,0.5);
    \draw[postaction={decorate}]  (5,0.5) -- (7,0.5);
    \draw[postaction={decorate}] (3,0.5) -- (5,0.5);
    \draw[postaction={decorate}]  (13,0.5) -- (11,0.5);
    \draw[postaction={decorate}] (13,0.5) -- (15,0.5);
    \draw[postaction={decorate}]  (17,0.5) -- (15,0.5);
\end{scope}

\draw [fill=ttqqqq] (3,2)  node {$[4,5)$};
\draw [fill=ttqqqq] (4,3) node {$[3,5)$};
\draw [fill=ttqqqq] (5,2) node {$[3,4)$};
\draw [fill=ttqqqq] (6,3)  node {$[2,4)$};
\draw [fill=ttqqqq] (7,2)  node {$[2,3)$};
\draw [fill=ttqqqq] (8,3) node {$[1,3)$};
\draw [fill=ttqqqq] (9,2)  node {$[1,2)$};
\draw [fill=ttqqqq] (5,4)  node {$[2,5)$};
\draw [fill=ttqqqq] (7,4)   node {$[1,4)$};
\draw [fill=ttqqqq] (6,5)   node {$[1,5)$};
\draw [fill=ttqqqq] (12,2)  node {$[1,2)$};
\draw [fill=ttqqqq] (12,4)  node {$[3,4)$};
\draw [fill=ttqqqq] (13,3)  node {$[1,4)$};
\draw [fill=ttqqqq] (13,5) node {$[3,5)$};
\draw [fill=ttqqqq] (14,2)  node {$[2,4)$};
\draw [fill=ttqqqq] (14,4)   node {$[1,5)$};
\draw [fill=ttqqqq] (15,3) node {$[2,5)$};
\draw [fill=ttqqqq] (15,5)  node {$[1,3)$};
\draw [fill=ttqqqq] (16,2) node {$[4,5)$};
\draw [fill=ttqqqq] (16,4) node {$[2,3)$};
\draw [fill=ttqqqq] (9,0.5) circle (2.5pt);
\draw [fill=ttqqqq] (7,0.5) circle (2.5pt);
\draw [fill=ttqqqq] (5,0.5) circle (2.5pt);
\draw [fill=ttqqqq] (3,0.5) circle (2.5pt);
\draw [fill=ttqqqq] (13,0.5) circle (2.5pt);
\draw [fill=ttqqqq] (11,0.5) circle (2.5pt);
\draw [fill=ttqqqq] (15,0.5) circle (2.5pt);
\draw [fill=ttqqqq] (17,0.5) circle (2.5pt);
\end{tikzpicture}
\caption{Examples showing the Auslander-Reiten quiver of $\mod{\mathbf{k}Q}$, where $Q$ is a quiver of type $A_4$. The corresponding quiver $Q$ is drawn below each example. The objects are labelled by their supporting intervals, considered as representations of $Q$. The objects in the shaded region on the left of each diagram are the projective objects and the objects in the other shaded regions are injective. Note $[1,5)$ is both projective and injective in the equioriented example on the left, but neither in the example on the right. The quivers have been drawn so that the Auslander-Reiten translate $\tau$ moves an object one step to the left.} \label{A4ModuleCatExamples}
\end{figure}
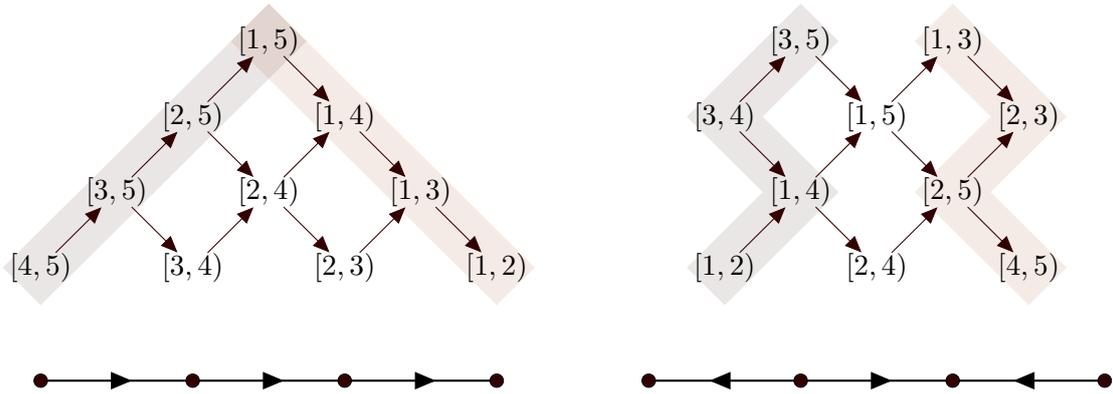

\begin{remark}\label{AtypeARmesh}
    If we consider the \emph{equioriented} quiver of type $A_n$, it can be seen that the squares of the Auslander-Reiten quiver are of the form:
\begin{center}
    
\definecolor{ududff}{rgb}{0,0,0}
\begin{tikzpicture}[line cap=round,line join=round,>=triangle 45,x=1cm,y=1cm]
\draw [->] (2.25,2.25) -- (2.75,2.75);
\draw [->] (3.25,2.75) -- (3.75,2.25);
\draw [->] (2.25,1.75) -- (2.75,1.25);
\draw [->] (3.25,1.25) -- (3.75,1.75);
\draw [fill=ududff] (2,2) node {${[a,b)}$};
\draw [fill=ududff] (3,3) node {${[a-1,b)}$};
\draw [fill=ududff] (3,1) node {${[a,b-1)}$};
\draw [fill=ududff] (4.5,2) node {${[a-1,b-1)}$};
\end{tikzpicture}
\end{center}
where $1<a<b \leq n+1$ and the object $[a,a)$ is considered to be zero. 
The arrows pointing diagonally up are monomorphisms which include a representation supported on a shorter interval into a longer one. Dually, the arrows pointing diagonally down, are epimorphisms, restricting a representation to one supported on a sub-interval. 
\end{remark}
\subsection{The module categories of type $\tilde{A}$}
Let $Q$ be a quiver whose underlying graph is a cycle of order $n$, and which has $p>0$ edges oriented in the clockwise direction and $q>0$ arrows in the anticlockwise direction. We say that $Q$ has type $\tilde{A}_{p,q}$ and note that $p+q=n$. 

The Auslander-Reiten quiver of $\mod{\mathbf{k}Q}$ has three types of connected component called the preprojective component $\mathcal{P}$, the preinjective component $\mathcal{I} $ and the regular components $\mathcal{R}$. It can be represented pictorially as follows:
\begin{center}
    \definecolor{ududff}{rgb}{0,0,0}
\begin{tikzpicture}[line cap=round,line join=round,>=triangle 45,x=0.2cm,y=0.2cm]
\draw [line width=1pt] (-10,-6)-- (-8,-4);
\draw [line width=1pt] (-8,-4)-- (-10,-2);
\draw [line width=1pt] (-10,-2)-- (-6,2);
\draw [line width=1pt] (-6,2)-- (-8,4);
\draw [line width=1pt] (-8,4)-- (10,4);
\draw [line width=1pt] (-10,-6)-- (10,-6);
\draw [line width=1pt] (26,4)-- (46,4);
\draw [line width=1pt] (46,4)-- (48,2);
\draw [line width=1pt] (48,2)-- (44,-2);
\draw [line width=1pt] (44,-2)-- (46,-4);
\draw [line width=1pt] (46,-4)-- (44,-6);
\draw [line width=1pt] (44,-6)-- (26,-6);
\draw [rotate around={90:(18,-1)},line width=1pt] (18,-1) ellipse (0.2*7.631161284688725cm and 0.2*5.76494774936704cm);
\draw[color=ududff] (2,-1) node {$\mathcal{P}$};
\draw[color=ududff] (18,-1) node {$\mathcal{R}$};
\draw[color=ududff] (36,-1) node {$\mathcal{I}$};
\end{tikzpicture}
\end{center}

\noindent Morphisms do exist between some of these components, but these morphisms are not irreducible (they factor through in infinitely many indecomposable objects) so they are not captured by the Auslander-Reiten quiver. Non-zero morphisms only exist from left to right in the diagram, so there are no non-zero morphisms from an object in $\mathcal{I}$ to any object in $\mathcal{P}$ or $\mathcal{R}$, and there are no non-zero morphisms from an object in $\mathcal{R}$ to any object in $\mathcal{P}$. All morphisms within a given connected component \textit{are} described by the Auslander-Reiten quiver (\cite{Liu}, Theorem~2.2) and any component with this property is said to be a standard component. 

Each component of the Auslander-Reiten quiver is an example of either an `infinite translation quiver' or a quotient of such a quiver. We define this for any connected acyclic quiver although, in our examples, this will always be either a quiver of type $\tilde{A}_{p,q}$, or the equioriented $A_\infty $ quiver with a source.
\begin{definition}[\cite{ASS}, Chapter~VIII]
    Let $Q=(Q_0,Q_1)$ be a connected and acyclic quiver. The infinite translation quiver $(\IZ Q, \tau)$ has vertex set \[ (\IZ Q)_0 = \IZ \times Q_0 = \{ (n,x) \mid n \in \IZ, \; x \in Q_0 \}\]
    and for each arrow $\alpha : x \too y$ in $Q_1$, there are two arrows 
    \[ (n,\alpha): (n,x) \too (n,y) \quad \text{and} \quad (n,\alpha'): (n+1,y) \too (n,x)\]
    in $(\IZ Q)_1$. Furthermore, all arrows in $(\IZ Q)_1$ are of this form. The translation $\tau$ is defined by
    \[ \tau(n,x) = (n+1, x) \quad \text{for all} \quad (n,x) \in (\IZ Q)_0.\]
    We define $\IN Q$ (and $(-\IN) Q$) to be the full subquiver with vertices $(n,x) \in (\IZ Q)_0$ such that $n \geq 0$ (respectively $n \leq 0$). Furthermore, we define the zero section of $ \IZ Q$ to be the full subquiver with vertices in $\IN Q \cap (-\IN) Q$.
\end{definition}

\subsubsection{The preprojective and preinjective components}
We relate the preprojective and preinjective components of the Auslander-Reiten quiver to certain infinite translation quivers using the following result from \cite[Section~VIII.2]{ASS}.
\begin{proposition}\label{nonregcomponents}
    Suppose $Q$ is a quiver of type $ \tilde{A_n}$. The Auslander-Reiten quiver of $\mod{\mathbf{k}Q}$ contains:
    \begin{enumerate}
        \item a single preprojective component $\mathcal{P}$ which is isomorphic to $(-\IN)Q^{\text{op}}$. The translation corresponds to the Auslander-Reiten translate $\tau$. The full subquiver of all indecomposable projective modules is isomorphic to the zero-section of $(-\IN)Q^{\text{op}}$.
        \item a single preinjective component $\mathcal{I}$ which is isomorphic to $\IN Q^{\text{op}}$. The translation corresponds to the Auslander-Reiten translate $\tau$. The full subquiver of all indecomposable injective modules is isomorphic to the zero-section of $\IN Q^{\text{op}}$.
    \end{enumerate}
\end{proposition}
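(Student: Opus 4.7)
The plan is to use the knitting algorithm applied starting from the indecomposable projective (respectively injective) modules, identify the zero section, and then verify that iterated $\tau^{-1}$ (respectively $\tau$) extends this to an infinite translation quiver of the claimed shape.

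For part (1), I would first identify the full subquiver on the indecomposable projective modules. For each vertex $x\in Q_0$, there is an indecomposable projective $P(x)$, and since $\mathbf{k}Q$ is hereditary and $Q$ is of type $\tilde A$ with a non-cyclic orientation, the space of irreducible morphisms $P(x)\to P(y)$ has a basis indexed by the arrows $y\to x$ of $Q$. Thus the full subquiver on the $P(x)$ is isomorphic to $Q^{\mathrm{op}}$, which will play the role of the zero section. I would then argue that no indecomposable projective module over a tame hereditary algebra of type $\tilde A$ is injective, and more generally that no module of the form $\tau^{-k}P(x)$ with $k\geq 0$ is injective (this is the key structural fact that distinguishes the tame case from the Dynkin case). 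Granted this, Proposition~\ref{tauProperties} ensures that $\tau^{-1}$ sends each preprojective indecomposable to a new preprojective indecomposable, and that $\tau^{-1}$ is a bijection on this set.

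Next I would use the Auslander--Reiten sequences $0\to M\to E\to \tau^{-1}M\to 0$, together with the observation from the paper that $E$ has at most two indecomposable summands, to run the knitting procedure. Each mesh in the component is prescribed by such a sequence, and the resulting translation quiver is the unique one whose zero section is $Q^{\mathrm{op}}$ and in which every vertex has a $\tau^{-1}$-successor; by construction this is $(-\IN)Q^{\mathrm{op}}$ with translation $\tau$. Connectedness of this component follows from connectedness of $Q^{\mathrm{op}}$, and the fact that it exhausts the preprojective modules is immediate from the definition (a preprojective module is one of the form $\tau^{-k}P(x)$). To see it is an entire connected component of the Auslander--Reiten quiver and not a proper part of one, I would use that the preprojective indecomposables are $\tau$-stable as a class: the $\tau$-orbit of any indecomposable in this component terminates at some $P(x)$, so no irreducible morphism can connect it to an indecomposable outside the component.

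Part (2) is completely dual, starting from the indecomposable injectives $I(x)$ whose full subquiver is again isomorphic to $Q^{\mathrm{op}}$, and applying $\tau$ iteratively, using that no $\tau^k I(x)$ for $k\geq 0$ is projective. The main obstacle is exactly this separation statement — that the preprojective and preinjective components are disjoint and each is infinite — which is the characteristic feature of the tame (as opposed to Dynkin) case and is what forces the components to be genuinely infinite translation quivers rather than finite ones. Once this is accepted, the rest is a routine application of the knitting algorithm and the uniqueness of the mesh structure determined by the almost-split sequences; we refer to \cite[Section~VIII.2]{ASS} for the complete argument.
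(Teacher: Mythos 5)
Your sketch is correct and follows the same route as the paper, which simply invokes \cite[Section~VIII.2]{ASS} for this structural result without reproducing the argument. The ingredients you identify — the zero section on projectives isomorphic to $Q^{\mathrm{op}}$, the fact that no $\tau^{-k}P(x)$ is injective in the tame case, and the uniqueness of the mesh structure under knitting with at most two middle summands — are precisely what the cited reference uses.
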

\noindent An example of a preprojective component of type $\Tilde{A}_{2,2}$ is shown in Figure~\ref{PreprojectiveComponentA4}.
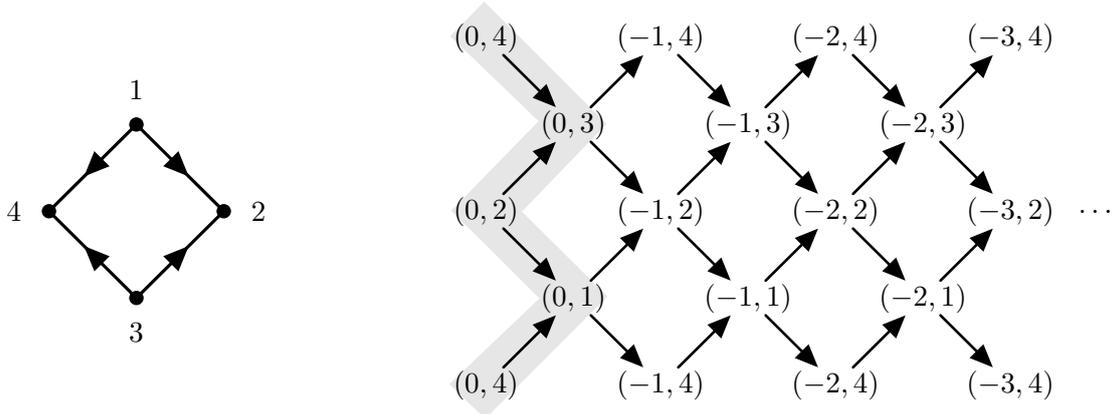
\begin{figure}
\centering
\definecolor{zzttqq}{rgb}{0,0,0}
\definecolor{ududff}{rgb}{0,0,0}
\begin{tikzpicture}[line cap=round,line join=round,>=triangle 45,x=0.23cm,y=0.23cm]
\fill[line width=1pt,color=zzttqq,fill=zzttqq,fill opacity=0.10000000149011612] (24.953292817630093,-11.968862141400432) -- (22.900371589883136,-9.96857991949317) -- (28.058994162170364,-5.020513370564676) -- (23.005649601562464,0.03283119004314617) -- (28.0063551563307,5.1914537623302985) -- (22.9530105957228,10.086881305419126) -- (24.900653811790427,12.087163527326389) -- (31.901641588465953,5.033536744811304) -- (27.05885305121672,-0.01980781579651865) -- (31.95428059430562,-4.915235358885347) -- cycle;

\begin{scope}[very thick,decoration={
    markings,
    mark=at position 0.6 with {\arrow{>}}}
    ] 
\draw[postaction={decorate}] (5,5) -- (10,0);
\draw[postaction={decorate}] (5,5) -- (0,0);
\draw[postaction={decorate}] (5,-5) -- (0,0);
\draw[postaction={decorate}] (5,-5) -- (10,0);
\end{scope}

\draw [->,line width=1pt] (26,9) -- (29,6);
\draw [->,line width=1pt] (26,1) -- (29,4);
\draw [->,line width=1pt] (31,6) -- (34,9);
\draw [->,line width=1pt] (31,4) -- (34,1);
\draw [->,line width=1pt] (36,1) -- (39,4);
\draw [->,line width=1pt] (36,9) -- (39,6);
\draw [->,line width=1pt] (26,-1) -- (29,-4);
\draw [->,line width=1pt] (26,-9) -- (29,-6);
\draw [->,line width=1pt] (31,-6) -- (34,-9);
\draw [->,line width=1pt] (31,-4) -- (34,-1);
\draw [->,line width=1pt] (36,-9) -- (39,-6);
\draw [->,line width=1pt] (36,-1) -- (39,-4);
\draw [->,line width=1pt] (41,-4) -- (44,-1);
\draw [->,line width=1pt] (41,4) -- (44,1);
\draw [->,line width=1pt] (46,1) -- (49,4);
\draw [->,line width=1pt] (46,-1) -- (49,-4);
\draw [->,line width=1pt] (51,-4) -- (54,-1);
\draw [->,line width=1pt] (51,4) -- (54,1);
\draw [->,line width=1pt] (41,6) -- (44,9);
\draw [->,line width=1pt] (46,9) -- (49,6);
\draw [->,line width=1pt] (51,6) -- (54,9);
\draw [->,line width=1pt] (41,-6) -- (44,-9);
\draw [->,line width=1pt] (46,-9) -- (49,-6);
\draw [->,line width=1pt] (51,-6) -- (54,-9);
\draw [fill=ududff] (5,5) circle (2.5pt);
\draw[color=ududff] (5,7) node {$1$};
\draw [fill=ududff] (10,0) circle (2.5pt);
\draw[color=ududff] (12,0) node {$2$};
\draw [fill=ududff] (0,0) circle (2.5pt);
\draw[color=ududff] (-2,0) node {$4$};
\draw [fill=ududff] (5,-5) circle (2.5pt);
\draw[color=ududff] (5,-7) node {$3$};
\draw [fill=ududff] (25,0) node {$(0,2)$};
\draw [fill=ududff] (30,5) node {$(0,3)$};
\draw [fill=ududff] (25,10) node {$(0,4)$};
\draw [fill=ududff] (30,-5) node {$(0,1)$};
\draw [fill=ududff] (25,-10) node {$(0,4)$};
\draw [fill=ududff] (35,-10) node {$(-1,4)$};
\draw [fill=ududff] (35,0) node {$(-1,2)$};
\draw [fill=ududff] (35,10) node {$(-1,4)$};
\draw [fill=ududff] (40,5) node {$(-1,3)$};
\draw [fill=ududff] (40,-5) node {$(-1,1)$};
\draw [fill=ududff] (45,-10) node {$(-2,4)$};
\draw [fill=ududff] (45,0) node {$(-2,2)$};
\draw [fill=ududff] (45,10) node {$(-2,4)$};
\draw [fill=ududff] (50,5) node {$(-2,3)$};
\draw [fill=ududff] (50,-5) node {$(-2,1)$};
\draw [fill=ududff] (55,-10) node {$(-3,4)$};
\draw [fill=ududff] (55,0) node {$(-3,2)$};
\draw [fill=ududff] (55,10) node {$(-3,4)$};
\draw [fill=ududff] (60,0) node {$\dots$};
\end{tikzpicture}
\caption{Example showing $(-\IN)Q^{\text{op}}$ for the quiver $Q$ shown on the left. This is isomorphic to the preprojective component of Auslander-Reiten quiver of $\mod{\mathbf{k}Q}$. The objects in the shaded region on the left of the diagram are in the zero section, and correspond to the indecomposable projective objects in $\mod{\mathbf{k}Q}$. Note that in this diagram, the vertices drawn along the top and bottom of $(-\IN)Q^{\text{op}}$ should be identified.} \label{PreprojectiveComponentA4}
\end{figure}

\subsubsection{The regular components}
The regular part of the Auslander-Reiten quiver of $\mod{\mathbf{k}Q}$ is made up of components called tubes. These are isomorphic to quotients of an infinite translation quiver. Let $\vv{A}_\infty$ be the equioriented quiver of type $A_\infty$:

\begin{center}
\definecolor{ttqqqq}{rgb}{0,0,0}
\begin{tikzpicture}[line cap=round,line join=round,>=triangle 45,x=1.3cm,y=1.3cm]

\begin{scope}[thick,decoration={
    markings,
    mark=at position 0.6 with {\arrow{>}}}
    ] 
\draw[postaction={decorate}] (1,2) -- (2,2);
\draw[postaction={decorate}] (2,2) -- (3,2);
\draw[postaction={decorate}] (3,2) -- (4,2);
\draw[postaction={decorate}] (4,2) -- (5,2);
\draw[postaction={decorate}] (7,2) -- (8,2);
\draw[postaction={decorate}] (8,2) -- (9,2);
\end{scope}

\draw [fill=ttqqqq] (1,2) circle (2.5pt);
\draw[color=ttqqqq] (1.,2.42) node {$1$};
\draw [fill=ttqqqq] (2,2) circle (2.5pt);
\draw[color=ttqqqq] (2.,2.42) node {$2$};
\draw [fill=ttqqqq] (3,2) circle (2.5pt);
\draw[color=ttqqqq] (3.,2.42) node {$3$};
\draw [fill=ttqqqq] (4,2) circle (2.5pt);
\draw[color=ttqqqq] (4.,2.42) node {$4$};
\draw [fill=ttqqqq] (5,2) circle (2.5pt);
\draw[color=ttqqqq] (5.,2.42) node {$5$};
\draw [fill=ttqqqq] (7,2) circle (2.5pt);
\draw[color=ttqqqq] (7.,2.42) node {$m-1$};
\draw [fill=ttqqqq] (8,2) circle (2.5pt);
\draw[color=ttqqqq] (8.,2.42) node {$m$};
\draw [fill=ttqqqq] (9,2) circle (2.5pt);
\draw[color=ttqqqq] (9.,2.42) node {$m+1$};
\draw [fill=ttqqqq] (6,2) node {$\dots$};
\draw [fill=ttqqqq] (10,2) node {$\dots$};
\end{tikzpicture}
\end{center}
The translation $\tau$ is an automorphism of the infinite translation quiver $\IZ \vv{A}_\infty$ and so $\tau^r$ is also an automorphism for any $r \in \IZ$. We consider the orbit space of $\IZ \vv{A}_\infty$ under the action of $\tau^r$. 
\begin{definition}
    A component of the Auslander-Reiten quiver which is isomorphic to $\IZ \vv{A}_\infty/(\tau^r)$ for some $r \geq 1$ is called a \emph{stable tube of rank} $r$. A stable tube of rank $r=1$ is called a \emph{homogeneous} tube. \\
The vertices of a rank $r$ tube are of the form $(n,\ell)$ for some $n \in \IZ_r$ and $\ell \geq 1$. If $\ell = 1$, then the vertex is said to be at the \emph{mouth} of the tube.
\end{definition}
We are now in the position to describe $\mathcal{R}$ the regular components.

\begin{proposition}
    The components of $\mathcal{R}$ consist of:
    \begin{enumerate}
        \item a standard stable tube of rank $p$,
        \item a standard stable tube of rank $q$,
        \item a family of standard homogeneous tubes indexed by $\mathbf{k}^*$. 
    \end{enumerate}
    All of the tubes are pairwise orthogonal (so there are no non-zero morphisms between tubes).
\end{proposition}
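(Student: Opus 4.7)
The plan is to combine general results about tame hereditary algebras with explicit computations for $\tilde{A}_{p,q}$. The starting point is the Dlab--Ringel classification of regular modules over a tame hereditary algebra, which asserts that the regular part of the Auslander--Reiten quiver splits into a pairwise orthogonal family of tubes indexed by a subset of $\IP^1(\mathbf{k})$, with only finitely many exceptional tubes of rank greater than one. Since $\mathbf{k}Q$ is tame hereditary (it is representation-infinite of Euclidean type), this applies to our setting and immediately delivers the tube structure; what remains is to identify the index set, the ranks, and to check orthogonality and standardness.

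First I would exhibit the family of homogeneous tubes. For each $\lambda \in \mathbf{k}^*$, one constructs a ``band'' representation $B_\lambda$ by placing a one-dimensional vector space at every vertex, assigning the identity map to every arrow except one distinguished arrow, on which the map is multiplication by $\lambda$. A direct check shows $B_\lambda$ is a brick, that $B_\lambda \cong B_\mu$ iff $\lambda = \mu$, and that each $B_\lambda$ is $\tau$-periodic of period $1$, which identifies the tube containing $B_\lambda$ as homogeneous. The $B_\lambda$ are pairwise orthogonal since a non-zero morphism $B_\lambda \to B_\mu$ would have to intertwine multiplication by $\lambda$ and by $\mu$.

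Next I would produce the two exceptional regular simples corresponding to $0, \infty \in \IP^1(\mathbf{k})$. These are obtained as degenerations of the band construction: one by taking the distinguished arrow map to be the nilpotent Jordan block (rather than an isomorphism) on one of the two orientation arcs of length $p$, and the other by doing the same on the arc of length $q$. A computation using almost split sequences then shows that $\tau$ acts on the mouth of these tubes by cyclically permuting regular simples with periods exactly $p$ and $q$, respectively; at this step the mesh structure of Proposition~\ref{nonregcomponents} is used to transport the computation between the regular and non-regular components. Orthogonality between these two exceptional tubes and the homogeneous family follows by again comparing the linear data of the regular simples; the case of general regular modules is obtained by induction on regular length using the long exact $\Hom/\Ext^1$ sequences associated to the ``wing'' filtrations in each tube.

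Finally, standardness of each regular component is deduced from the theorem of \cite{Liu}, which ensures that every Auslander--Reiten component of a hereditary algebra of Euclidean type is standard. \emph{The main obstacle} is the identification of the exceptional regular simples and the verification that their $\tau$-periods are precisely $p$ and $q$: this requires carefully tracking an almost split sequence all the way around the cycle and relating it to the combinatorics of clockwise versus anticlockwise arrows, which is the step where the specific structure of $\tilde{A}_{p,q}$ (as opposed to a general tame hereditary algebra) genuinely enters.
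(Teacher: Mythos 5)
The paper does not prove this proposition at all --- it is stated as a classical structural fact for tame hereditary algebras of Euclidean type $\tilde{A}$, the kind of result found in Dlab--Ringel or in the same part of \cite{ASS} that the preceding Proposition~\ref{nonregcomponents} is drawn from. So there is no argument in the paper to compare against; you are reconstructing a textbook proof.

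Your overall architecture is the standard one and is sound in outline: invoke the general tube decomposition for tame hereditary algebras, exhibit the bands $B_\lambda$ for the $\mathbf{k}^*$-family of homogeneous tubes (the computation that $B_\lambda$ is a brick and that $\operatorname{Hom}(B_\lambda,B_\mu)=0$ for $\lambda\neq\mu$ is correct), pin down the two exceptional tubes, and get standardness from Liu-type results on components of hereditary algebras. However, your identification of the exceptional quasi-simples is wrong in a way that matters. You propose to obtain the regular simples of the rank-$p$ and rank-$q$ tubes as ``degenerations of the band construction'' by replacing the distinguished scalar by a nilpotent; since the band is one-dimensional at every vertex, this nilpotent is simply $0$. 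Whenever the remaining arrows still connect the cycle, the resulting module is indecomposable of dimension vector $\delta=(1,\dots,1)$. But for $p\geq 2$ a regular module of dimension vector $\delta$ lying in the rank-$p$ tube has regular length exactly $p\geq 2$, so it sits $p-1$ levels above the mouth --- it cannot be a quasi-simple. The $p$ (resp.\ $q$) quasi-simples of the exceptional tube are pairwise non-isomorphic real-root representations with dimension vectors $e_0,\dots,e_{p-1}$ each strictly smaller than $\delta$ and summing over a $\tau$-orbit to $\delta$; for $\tilde{A}_{p,q}$ they are the interval representations supported on the maximal linearly oriented segments of the cycle of the appropriate orientation, not full-support modules. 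This error lands exactly on the step you yourself flagged as the main obstacle (tracking the almost split sequences to compute the $\tau$-period), so it is not a detail to be tightened but a computation that needs to be redone with the correct regular simples as input. Once that is fixed, the orthogonality argument (regular simples in distinct tubes admit no nonzero morphism because they are simple in the regular abelian category and non-isomorphic, then induct on regular length) goes through as you sketch it.
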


We can describe the objects in a rank $r$ tube $\mathcal{T}$ explicitly. The full subcategory of $\mod{\mathbf{k}Q}$ with indecomposable objects in $\mathcal{T}$ is hereditary, $\IC$-linear and abelian. There are simple objects along the mouth. Note that these are simple in the tube, but may not be simple in the bigger module category and for this reason they are referred to as quasi-simples. Tubes are uniserial, so every indecomposable object $t = (n,\ell)$ has a unique composition series, which has (composition) length $\ell(t) = \ell$. We denote the socle and top of a module $M$ by $\operatorname{soc}(M)$ and $\operatorname{top}(M)$ respectively.

\begin{lemma} \label{uniserialprops}
    Let $\mathcal{A}$ be a uniserial category and suppose $M,N$ are indecomposable objects in $\mathcal{A}$. Then the following are equivalent:
    \begin{enumerate}
        \item $M \cong N$.
        \item $\operatorname{top}(M) \cong \operatorname{top}(N)$ and $\ell(M) =\ell(N)$.
        \item $\operatorname{soc}(M) \cong \operatorname{soc}(N)$ and $\ell(M) =\ell(N)$.
    \end{enumerate}
\end{lemma}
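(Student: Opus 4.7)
The forward implications (1) $\Rightarrow$ (2) and (1) $\Rightarrow$ (3) are immediate: any isomorphism $M \cong N$ induces isomorphisms on tops and socles and preserves the composition length.

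For (2) $\Rightarrow$ (1), the plan is to proceed by induction on the common length $\ell = \ell(M) = \ell(N)$. The base case $\ell = 1$ is trivial, since then both $M$ and $N$ are simple and equal to their own tops. For the inductive step, uniseriality provides $M$ with a unique maximal subobject $M'$, fitting into a short exact sequence
\[ 0 \to M' \to M \to \operatorname{top}(M) \to 0, \]
and similarly $N' \subset N$. Both $M'$ and $N'$ are indecomposable of length $\ell - 1$. The crucial point is that in a standard stable tube the top of $M'$ is determined by $\operatorname{top}(M)$ alone: the indecomposables with a given quasi-simple top $(n,1)$ form a ray $\{(n,k)\}_{k \geq 1}$ in the tube, and $(n, \ell - 1)$ is (up to isomorphism) the unique maximal subobject of $(n, \ell)$. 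Hence $\operatorname{top}(M') \cong \operatorname{top}(N')$, so the inductive hypothesis yields $M' \cong N'$. Since both $M$ and $N$ are then non-split extensions of the same simple by the same indecomposable, and such a non-split extension in a tube is unique up to isomorphism (it is the next module along the ray), we conclude $M \cong N$.

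The implication (3) $\Rightarrow$ (1) is entirely dual: replace the unique maximal subobject by the unique simple subobject $\operatorname{soc}(M)$, use the induced short exact sequence with the minimal nonzero quotient, and run the same induction along the corresponding co-ray in the tube.

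The main obstacle is the inductive step, namely establishing $\operatorname{top}(M') \cong \operatorname{top}(N')$ (and dually $\operatorname{soc}$ of the minimal quotients in case (3)). Abstract uniseriality alone does not force the intermediate composition factors of two indecomposables with common top and length to coincide; the additional input is the ray/co-ray structure of standard stable tubes, which is encoded by the mesh relations in their Auslander--Reiten quivers as recalled above.
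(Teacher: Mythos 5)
The paper does not prove Lemma~\ref{uniserialprops}, so there is nothing to compare your argument against directly, but your inductive step contains a genuine error. Your labelling $(n,k)$ for the indecomposable with top $(n,1)$ and length $k$ is incompatible with your assertion that $(n,\ell-1)$ is the unique maximal subobject of $(n,\ell)$: in a tube the maximal subobject of an indecomposable with simple top $T$ has top $\tau T$, not $T$, so it lies on a different coray whenever the rank exceeds $1$. (In the paper's terminology it is a \emph{ray} -- fixed socle -- along which shorter objects are subobjects of longer ones; the family with fixed top is the \emph{coray}, and there shorter objects are quotients, not subobjects.) So the sentence ``$(n,\ell-1)$ is the unique maximal subobject of $(n,\ell)$'' is false, and the inference ``hence $\operatorname{top}(M')\cong\operatorname{top}(N')$'' rests on it.

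That said, the conclusion $\operatorname{top}(M')\cong\operatorname{top}(N')$ is in fact true, and the induction is salvageable by giving the correct reason. The non-split short exact sequence $0\to M'\to M\to T\to 0$ shows $\Ext^1(T,M')\neq 0$, and the Auslander--Reiten formula identifies $\Ext^1(T,M')$ with the linear dual of $\Hom(M',\tau T)$; for $M'$ uniserial this is nonzero if and only if $\operatorname{top}(M')\cong\tau T$, and in that case it is one-dimensional. Thus $\operatorname{top}(M')\cong\tau T\cong\operatorname{top}(N')$, the inductive hypothesis gives $M'\cong N'$, and the one-dimensionality of $\Ext^1(T,M')$ makes the non-split extension of $T$ by $M'$ unique up to isomorphism, so $M\cong N$. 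Your closing remark that abstract uniseriality alone does not suffice, and that one must invoke the tube structure (concretely, the one-dimensionality of these $\Ext^1$ groups), is a fair observation about the lemma's phrasing.
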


For convenience, we will sometimes use the language of rays and corays. For a quasi-simple $S$, the ray starting at $S$ consists of all indecomposable objects $M$ such that $\operatorname{soc}(M) \cong S$. By Lemma~\ref{uniserialprops}, there is a unique object $M_\ell$ of a length $\ell$ on the ray up to isomorphism. For any $\ell \geq 1$, there is an indecomposable monomorphism which maps $M_\ell$ to $M_{\ell +1}$ which has cokernel $\operatorname{top}(M_{\ell +1})$ (this is unique up to scaling). Dually, the coray ending at $S$ consists of all indecomposable objects $M$ such that $\operatorname{top}(M) \cong S$ and there is a unique object $M_\ell$ of a length $\ell$ on the ray. For any $\ell \geq 1$, there is an indecomposable epimorphism which maps $M_{\ell +1}$ to $M_\ell$ which has kernel $\operatorname{soc}(M_{\ell +1})$. We say that a morphism between two indecomposable objects on a ray (respectively coray) factors along the ray (respectively coray) if it is a composition of these irreducible monomorphisms (respectively epimorphisms).

\section{Preliminaries on Derived Categories}\label{PrelimDerived}
If $\mathcal{A}$ is an abelian category such as one of the module categories discussed above, then we can construct the bounded derived category $D^b(\mathcal{A})$. This has objects, which are complexes of objects in $\mathcal{A}$, and morphisms given by chain maps up to quasi-isomorphism (see \cite{Keller} for a more complete introduction). The abelian category $\mathcal{A}$ sits naturally as a full subcategory $\mathcal{A}[0] \subseteq D^b(\mathcal{A})$ consisting of complexes of the form 
\[ a[0] =  \dots \to 0 \to 0 \to a \to 0 \to 0 \to \dots\]
that are zero everywhere except in degree zero. This subcategory is often referred to as the  \textit{(standard) heart} of $D^b(\mathcal{A}) $.

In the context of this paper, it may seem that working in the larger category $D^b(\mathcal{A})$ rather than $\mathcal{A}$ adds a potentially unnecessary extra level of complexity. However, in the examples we are studying, computations are not significantly more complicated and in many cases may be simpler. In particular, since the algebras $\mathbf{k}Q$ that we consider are hereditary, all indecomposable objects in $D^b(\mod{\mathbf{k}Q})$ are complexes that are concentrated in one degree. In other words, any indecomposable object in $D^b(\mod{\mathbf{k}Q})$ is in some shift $\mathcal{A}[n]$ of the heart and so of the form $a[n]$, where $a$ is an indecomposable object of $\mod{\mathbf{k}Q}$ and $[n]$ denotes the shift of the complex by $n\in \IZ$. 
For any objects $a,b$ in $\mod{\mathbf{k}Q}$ then  
\[ \Hom_{D^b(\mod{\mathbf{k}Q})}(a,b[n]) = \Ext_{\mod{\mathbf{k}Q}}^n(a,b) \]
and the hereditary property implies that there can only be nonzero morphisms when $n=0$ or $n=1$.

\subsection{Auslander-Reiten theory}
It follows from the previous observation that in our examples, the Auslander-Reiten quiver of the derived category $D^b(\mod{\mathbf{k}Q})$ consists of $\mathbb{Z}$ copies of $\mod{\mathbf{k}Q}$, with some additional morphisms between these which correspond to extensions $\Ext^1$ in the module category. There is a functor $\tau: D^b(\mod{\mathbf{k}Q}) \to D^b(\mod{\mathbf{k}Q})$ which is again called the Auslander-Reiten translate. If $M$ is an indecomposable object in the module category which is not projective, then considering the module category as a subcategory of the derived category as above, the two notions of AR translate coincide. If $M$ is an indecomposable projective object corresponding to a vertex $v$, then $\tau M = I[-1]$ where $I$ is the indecomposable injective module corresponding to the vertex $v$. On the derived category, the Auslander-Reiten translate has nicer properties. For example, it is an autoequivalence with a well-defined inverse functor $\tau^{-1}$. 

In the derived category, instead of Auslander-Reiten sequences, there are distinguished triangles called the Auslander-Reiten triangles for each indecomposable object $M$, 
\begin{equation}
   M \stackrel{\scriptsize \colmat{f_1}{f_2}}{\too} E_1 \oplus E_2 \stackrel{\scriptsize \rowmat{g_1}{g_2}}{\too} \tau^{-1} M \too  M[1] \end{equation}
Again, the fact that the middle term has at most two non-zero indecomposable summands holds because we are considering cases where $Q$ is of type $A$ or type $\tilde{A}$ and depending on the choice of $M$, it is possible that one of these summands is zero. There are again commutative squares that can be seen (up to scaling by elements in $\mathbf{k}^*$) in the Auslander-Reiten quiver.
\begin{figure}[h]
\centering
\definecolor{ududff}{rgb}{0,0,0}
\begin{tikzpicture}[line cap=round,line join=round,>=triangle 45,x=1cm,y=1cm]
\draw [->,line width=1pt] (2.2,2.2) -- (2.8,2.8);
\draw [->,line width=1pt] (3.2,2.8) -- (3.8,2.2);
\draw [->,line width=1pt] (2.2,1.8) -- (2.8,1.2);
\draw [->,line width=1pt] (3.2,1.2) -- (3.8,1.8);
\draw [fill=ududff] (2,2) node {$ M$};
\draw [fill=ududff] (3,3) node {$E_1$};
\draw [fill=ududff] (3,1) node {$E_2$};
\draw [fill=ududff] (4.2,2) node {$\tau^{-1}M$};
\begin{scriptsize}
\draw[color=black] (2.35,2.7) node {$f_1$};
\draw[color=black] (3.6,2.7) node {$g_1$};
\draw[color=black] (2.35,1.3) node {$f_2$};
\draw[color=black] (3.6,1.3) node {$-g_2$};
\end{scriptsize}

\end{tikzpicture}
\end{figure}

\begin{definition} \label{phidefn}
For any indecomposable object $M$, we define the map \[ \Phi_M: M \to \tau^{-1} M\] 
to be the morphism $g_1 \circ f_1$, factoring through the arrows in the AR mesh. This is well defined up to a scaling by an element in $\mathbf{k}^*$.
\end{definition}

\subsection{Type $A$}
If $Q$ is of type $A_n$ with any orientation of the arrows, then Auslander-Reiten quiver of $D^b(\mod{\mathbf{k}Q})$ is isomorphic to the infinite translation quiver $\IZ Q^{op}$. The translation corresponds to the Auslander-Reiten translate. An example with an $A_4$ quiver is shown in Figure~\ref{A4AR}.

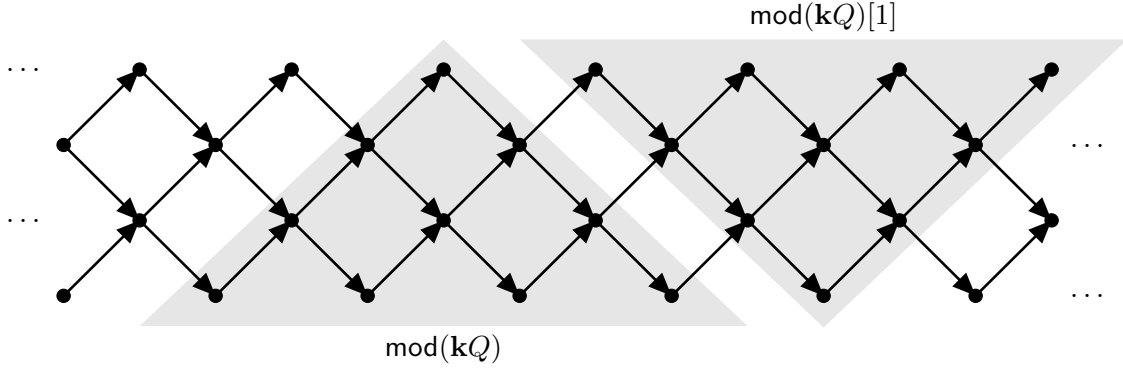
\begin{figure}
\centering
\definecolor{ttqqqq}{rgb}{0,0,0}
\begin{tikzpicture}[line cap=round,line join=round,>=triangle 45,x=1cm,y=1cm]
\fill[line width=1pt,color=ttqqqq,fill=ttqqqq,fill opacity=0.10000000149011612] (-1.,1.4) -- (-5.0,-2.4) -- (3,-2.4) -- cycle;
\fill[line width=1pt,color=ttqqqq,fill=ttqqqq,fill opacity=0.10000000149011612] (0,1.4) -- (4,-2.42) -- (8,1.4) -- cycle;
\draw [->,line width=1pt,color=ttqqqq] (-6,0) -- (-5,-1);
\draw [->,line width=1pt,color=ttqqqq] (-5,-1) -- (-4,-2);
\draw [->,line width=1pt,color=ttqqqq] (-6,-2) -- (-5,-1);
\draw [->,line width=1pt,color=ttqqqq] (-5,-1) -- (-4,0);
\draw [->,line width=1pt,color=ttqqqq] (-6,0) -- (-5,1);
\draw [->,line width=1pt,color=ttqqqq] (-5,1) -- (-4,0);
\draw [->,line width=1pt,color=ttqqqq] (-4,0) -- (-3,-1);
\draw [->,line width=1pt,color=ttqqqq] (-4,-2) -- (-3,-1);
\draw [->,line width=1pt,color=ttqqqq] (-4,0) -- (-3,1);
\draw [->,line width=1pt,color=ttqqqq] (-3,-1) -- (-2,0);
\draw [->,line width=1pt,color=ttqqqq] (-3,1) -- (-2,0);
\draw [->,line width=1pt,color=ttqqqq] (-3,-1) -- (-2,-2);
\draw [->,line width=1pt,color=ttqqqq] (-2,-2) -- (-1,-1);
\draw [->,line width=1pt,color=ttqqqq] (-1,-1) -- (0,0);
\draw [->,line width=1pt,color=ttqqqq] (0,0) -- (1,1);
\draw [->,line width=1pt,color=ttqqqq] (-2,0) -- (-1,1);
\draw [->,line width=1pt,color=ttqqqq] (-2,0) -- (-1,-1);
\draw [->,line width=1pt,color=ttqqqq] (-1,-1) -- (0,-2);
\draw [->,line width=1pt,color=ttqqqq] (0,-2) -- (1,-1);
\draw [->,line width=1pt,color=ttqqqq] (1,-1) -- (2,0);
\draw [->,line width=1pt,color=ttqqqq] (2,0) -- (3,1);
\draw [->,line width=1pt,color=ttqqqq] (-1,1) -- (0,0);
\draw [->,line width=1pt,color=ttqqqq] (0,0) -- (1,-1);
\draw [->,line width=1pt,color=ttqqqq] (1,-1) -- (2,-2);
\draw [->,line width=1pt,color=ttqqqq] (1,1) -- (2,0);
\draw [->,line width=1pt,color=ttqqqq] (2,-2) -- (3,-1);
\draw [->,line width=1pt,color=ttqqqq] (3,-1) -- (4,0);
\draw [->,line width=1pt,color=ttqqqq] (4,0) -- (5,1);
\draw [->,line width=1pt,color=ttqqqq] (2,0) -- (3,-1);
\draw [->,line width=1pt,color=ttqqqq] (3,-1) -- (4,-2);
\draw [->,line width=1pt,color=ttqqqq] (4,-2) -- (5,-1);
\draw [->,line width=1pt,color=ttqqqq] (5,-1) -- (6,0);
\draw [->,line width=1pt,color=ttqqqq] (6,0) -- (7,1);
\draw [->,line width=1pt,color=ttqqqq] (3,1) -- (4,0);
\draw [->,line width=1pt,color=ttqqqq] (4,0) -- (5,-1);
\draw [->,line width=1pt,color=ttqqqq] (5,-1) -- (6,-2);
\draw [->,line width=1pt,color=ttqqqq] (6,-2) -- (7,-1);
\draw [->,line width=1pt,color=ttqqqq] (5,1) -- (6,0);
\draw [->,line width=1pt,color=ttqqqq] (6,0) -- (7,-1);
\draw [color=ttqqqq](7.5,0.0) node {$\dots$};
\draw [color=ttqqqq](7.5,-2) node {$\dots$};
\draw [color=ttqqqq](-6.5,1) node {$\dots$};
\draw [color=ttqqqq](-6.5,-1) node {$\dots$};
\draw [fill=ttqqqq] (-5,1) circle (2.5pt);
\draw [fill=ttqqqq] (-3,1) circle (2.5pt);
\draw [fill=ttqqqq] (-1,1) circle (2.5pt);
\draw [fill=ttqqqq] (1,1) circle (2.5pt);
\draw [fill=ttqqqq] (3,1) circle (2.5pt);
\draw [fill=ttqqqq] (5,1) circle (2.5pt);
\draw [fill=ttqqqq] (7,1) circle (2.5pt);
\draw [fill=ttqqqq] (-4,0) circle (2.5pt);
\draw [fill=ttqqqq] (-2,0) circle (2.5pt);
\draw [fill=ttqqqq] (0,0) circle (2.5pt);
\draw [fill=ttqqqq] (2,0) circle (2.5pt);
\draw [fill=ttqqqq] (4,0) circle (2.5pt);
\draw [fill=ttqqqq] (6,0) circle (2.5pt);
\draw [fill=ttqqqq] (-5,-1) circle (2.5pt);
\draw [fill=ttqqqq] (-3,-1) circle (2.5pt);
\draw [fill=ttqqqq] (-1,-1) circle (2.5pt);
\draw [fill=ttqqqq] (1,-1) circle (2.5pt);
\draw [fill=ttqqqq] (3,-1) circle (2.5pt);
\draw [fill=ttqqqq] (5,-1) circle (2.5pt);
\draw [fill=ttqqqq] (7,-1) circle (2.5pt);
\draw [fill=ttqqqq] (-4,-2) circle (2.5pt);
\draw [fill=ttqqqq] (-2,-2) circle (2.5pt);
\draw [fill=ttqqqq] (0,-2) circle (2.5pt);
\draw [fill=ttqqqq] (2,-2) circle (2.5pt);
\draw [fill=ttqqqq] (-6,-2) circle (2.5pt);
\draw [fill=ttqqqq] (-6,0) circle (2.5pt);
\draw [fill=ttqqqq] (4,-2) circle (2.5pt);
\draw [fill=ttqqqq] (6,-2) circle (2.5pt);
\draw[color=ttqqqq] (-1,-2.7) node {$\mod{\mathbf{k}Q}$};
\draw[color=ttqqqq] (4.,1.7) node {$\mod{\mathbf{k}Q}[1]$};
\end{tikzpicture}
\caption{Auslander-Reiten quiver of $D^b(\mod{\mathbf{k}Q})$, where $Q$ is the equioriented quiver of type $A_4$. The shaded regions indicate the module category $\mod{\mathbf{k}Q}$ and its shift. } \label{A4AR}
\end{figure}

\subsection{Type $\tilde{A}$}
If $Q$ is a quiver of type $ \tilde{A_n}$, then the Auslander-Reiten quiver of the derived category splits into two types of components. As in the $A_n$ case, there are extensions in the module category between injective and projective objects and so, for each integer $n$, $\mathcal{P}[n]$ and $\mathcal{I}[n-1]$ fit together to form one component. This is isomorphic to the infinite translation quiver $\IZ Q^{op}$. There are also regular components $\mathcal{R}[n]$ for each $n \in \IZ$.

\begin{center}
\begin{tikzpicture}[line cap=round,line join=round,>=triangle 45,x=0.5cm,y=0.5cm]
\draw [line width=1pt] (-12,4)-- (-7,4);
\draw [line width=1pt] (-12,2)-- (-7,2);
\draw [line width=1pt] (-9.8,4)-- (-10,3.8);
\draw [line width=1pt] (-10,3.8)-- (-9.8,3.6);
\draw [line width=1pt] (-9.8,3.6)-- (-10,3.4);
\draw [line width=1pt] (-10,3.4)-- (-9.8,3.2);
\draw [line width=1pt] (-9.8,3.2)-- (-10,3);
\draw [line width=1pt] (-10,3)-- (-9.8,2.8);
\draw [line width=1pt] (-9.8,2.8)-- (-10,2.6);
\draw [line width=1pt] (-10,2.6)-- (-9.8,2.4);
\draw [line width=1pt] (-9.8,2.4)-- (-10,2.2);
\draw [line width=1pt] (-10,2.2)-- (-9.8,2);
\draw [rotate around={90:(-5.5,3)},line width=1pt] (-5.5,3) ellipse (0.5*1.2807764064044145cm and 0.5*0.80024259022012cm);
\draw [line width=1pt] (-13,4)-- (-12,4);
\draw [line width=1pt] (-13,2)-- (-12,2);
\draw [line width=1pt] (-4,4)-- (2,4);
\draw [line width=1pt] (-4,2)-- (2,2);
\draw [rotate around={90:(3.5,3)},line width=1pt] (3.5,3) ellipse (0.5*1.2807764064044145cm and 0.5*0.80024259022012cm);
\draw [line width=1pt] (5,4)-- (11,4);
\draw [line width=1pt] (5,2)-- (11,2);
\draw [line width=1pt] (-0.8,4)-- (-1,3.8);
\draw [line width=1pt] (-1,3.8)-- (-0.8,3.6);
\draw [line width=1pt] (-0.8,3.6)-- (-1,3.4);
\draw [line width=1pt] (-1,3.4)-- (-0.8,3.2);
\draw [line width=1pt] (-0.8,3.2)-- (-1,3);
\draw [line width=1pt] (-1,3)-- (-0.8,2.8);
\draw [line width=1pt] (-0.8,2.8)-- (-1,2.6);
\draw [line width=1pt] (-1,2.6)-- (-0.8,2.4);
\draw [line width=1pt] (-0.8,2.4)-- (-1,2.2);
\draw [line width=1pt] (-1,2.2)-- (-0.8,2);
\draw [line width=1pt] (8.2,4)-- (8,3.8);
\draw [line width=1pt] (8,3.8)-- (8.2,3.6);
\draw [line width=1pt] (8.2,3.6)-- (8,3.4);
\draw [line width=1pt] (8,3.4)-- (8.2,3.2);
\draw [line width=1pt] (8.2,3.2)-- (8,3);
\draw [line width=1pt] (8,3)-- (8.2,2.8);
\draw [line width=1pt] (8.2,2.8)-- (8,2.6);
\draw [line width=1pt] (8,2.6)-- (8.2,2.4);
\draw [line width=1pt] (8.2,2.4)-- (8,2.2);
\draw [line width=1pt] (8,2.2)-- (8.2,2);
\draw [rotate around={90:(12.5,3)},line width=1pt] (12.5,3) ellipse (0.5*1.2807764064044103cm and 0.5*0.8002425902201173cm);
\begin{scriptsize}
\draw[color=black] (-2.5,3) node {$\mathcal{I}[0]$};
\draw[color=black] (0.5,3) node {$\mathcal{P}[1]$};
\draw[color=black] (3.5,3) node {$\mathcal{R}[1]$};
\draw[color=black] (-5.5,3) node {$\mathcal{R}[0]$};
\draw[color=black] (-8.5,3) node {$\mathcal{P}[0]$};
\draw[color=black] (-11.5,3) node {$\mathcal{I}[-1]$};
\draw[color=black] (6.5,3) node {$\mathcal{I}[1]$};
\draw[color=black] (9.5,3) node {$\mathcal{P}[2]$};
\draw[color=black] (12.5,3) node {$\mathcal{R}[2]$};
\draw[color=black] (14.5,3) node {$\cdots$};
\draw[color=black] (-14,3) node {$\cdots$};
\end{scriptsize}
\end{tikzpicture}
\end{center}

 \section{Interleaving distance}\label{Interleaving}
In this section we introduce a definition of interleaving distance on persistence modules. We show how this restricts to the usual notion when considering equioriented quivers of type~$A$.
First we recall this usual definition. Consider again the \emph{equioriented} quiver of type $A_n$, oriented such that the arrows go from $x$ to $x+1$ for each $1 \leq x \leq n-1$.

\begin{definition}
    Let $\delta \in \IZ$. The $\delta$-shift of a representation $(M,\phi)$ is defined to be the representation $(M(\delta),\phi(\delta))$, where
\[ M(\delta)_x = \begin{cases} M_{x+\delta}, & 1 \leq x+\delta \leq n\\
0, & \text{otherwise}
    \end{cases}
\qquad \text{  and, }
\]
\[\phi(\delta)_{x,x+1} =  \begin{cases} \phi_{x+\delta, x+1+\delta}, & 1 \leq x+\delta\leq x+1+\delta \leq n\\
0, & \text{otherwise.}
    \end{cases}
\]
\end{definition}
\begin{remark}
    The $\delta$-shift operation commutes with taking (finite) direct sums.  
\end{remark} 

\begin{lemma} \label{deltaistau}
    Let $Q$ be an equioriented quiver of type $A_n$ and let $M$ be an indecomposable representation. If $M$ is not injective then \[M(1) = \tau^{-1} M.\]
\end{lemma}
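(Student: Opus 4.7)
The plan is to reduce to indecomposables using Gabriel's theorem (Theorem~\ref{Gabriel}) and then compute both sides of the claimed equality directly from their definitions. By Gabriel, $M \cong V_{[a,b)}$ for some $1 \leq a < b \leq n+1$. The first step is to identify the injective indecomposables in the equioriented case: as illustrated on the left of Figure~\ref{A4ModuleCatExamples}, the indecomposable injectives are precisely the $V_{[1, i+1)}$ for $1 \leq i \leq n$, so the hypothesis that $M$ is not injective becomes the condition $a \geq 2$.

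Next I would compute $M(1)$ directly from the definition of the $\delta$-shift. Since $V_{[a,b)}$ assigns $\mathbf{k}$ to vertices $a, a+1, \ldots, b-1$ with identity maps between consecutive such vertices (and zero everywhere else), unpacking the shift gives $V_{[a,b)}(1)_x = V_{[a,b)}_{x+1} = \mathbf{k}$ precisely when $a \leq x+1 < b$, i.e. when $a-1 \leq x < b-1$, with the linear maps on consecutive vertices in this range again forced to be identity isomorphisms and zero otherwise. Under the hypothesis $a \geq 2$ we have $a-1 \geq 1$ and $b-1 \leq n$, so the support sits inside the vertex set $\{1, \ldots, n\}$ and this data assembles into the interval representation $V_{[a-1, b-1)}$.

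Finally, I would identify $\tau^{-1} M$ using Remark~\ref{AtypeARmesh}: the Auslander-Reiten mesh in the equioriented type $A_n$ case places $V_{[a,b)}$ on the left and $V_{[a-1, b-1)}$ on the right, so $\tau^{-1} V_{[a, b)} \cong V_{[a-1, b-1)}$, and this is a genuine non-zero indecomposable exactly when $a \geq 2$, matching the non-injectivity hypothesis. Combining the two computations yields $M(1) \cong \tau^{-1} M$. The only substantive content beyond unwinding definitions is correctly characterising the injectives in the equioriented case and tracking the direction of the index shift in the $\delta$-shift versus the Auslander-Reiten mesh; I do not anticipate a significant obstacle beyond this bookkeeping.
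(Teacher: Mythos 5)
Your proposal is correct and follows essentially the same route as the paper's own proof: reduce to $M \cong V_{[a,b)}$ via Gabriel, characterise non-injectivity by $a>1$, compute $M(1)=V_{[a-1,b-1)}$ from the definition, and read off $\tau^{-1}V_{[a,b)}=V_{[a-1,b-1)}$ from the equioriented AR mesh. The only difference is cosmetic: you cite Remark~\ref{AtypeARmesh} explicitly where the paper just says the identification "can be read from the Auslander-Reiten quiver."
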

\begin{proof}
    By Theorem~\ref{Gabriel}, an indecomposable representation $M$ is of the form $V_{[a,b)}$ for some $1 \leq a < b \leq n+1$. Such a representation is injective when $a=1$, so we assume $a>1$. Under this assumption, it follows from the definition of the $\delta$-shift that $V_{[a,b)}(1) = V_{[a-1,b-1)}$. On the other hand, since $V_{[a,b)}$ is not injective, $\tau^{-1}V_{[a,b]}$ is nonzero and it can be read from the Auslander-Reiten quiver that $\tau^{-1}V_{[a,b)} = V_{[a-1,b-1)}$. 
\end{proof}
The lemma is written as a statement about objects in $\rep_{\mathbf{k}}(Q)$. We note however, that the inverse Auslander-Reiten translate $\tau^{-1}M$ in $\rep_{\mathbf{k}}(Q)$ and $D^b(\rep_{\mathbf{k}}(Q))$ coincide when $M$ is not an injective object in $\rep_{\mathbf{k}}(Q)$. We can therefore interpret this as a result about indecomposable objects in the standard heart $(\rep_{\mathbf{k}}(Q))[0] \subseteq D^b(\rep_{\mathbf{k}}(Q))$. In this case, the condition that $M$ is not injective is equivalent to the statement that $\tau^{-1}M \in D^b(\rep_{\mathbf{k}}(Q))$ is an object in the standard heart.
\begin{remark} \label{limitingtoZ}
  It is not the case that $M(\delta) = \tau^{-\delta}  M$ for all indecomposable objects $M$ and $\delta \in \IZ_{\geq 0}$. They differ when the support of $M$ is shifted in such a way that it would no longer be in the interval $[1,n+1)$. Using Proposition~\ref{tauProperties} and Lemma~\ref{deltaistau} it can be seen that $M(\delta) = \tau^{-\delta}  M$ precisely when $\tau^{-\delta} M$ is in the standard heart $(\rep_{\mathbf{k}}(A_n))[0] \subseteq D^b(\rep_{\mathbf{k}}(A_n))$, i.e. it is a complex concentrated in degree zero. For any $ k>0 $ we can consider $\rep_{\mathbf{k}}(A_n)$ as the subcategory of $\rep_{\mathbf{k}}(A_{n+2k})$ consisting of representations supported on the sub interval $[1+k, n+k+1) \subset [1,n+2k+1)$. For a fixed $M$ and $\delta$ it is always possible to find $k$ sufficiently big such that $M(\delta) = \tau^{-\delta}  M$ when considered as objects in $\rep_{\mathbf{k}}(A_{n+2k})$. In other words, the two operations do agree in an appropriately defined limit category $\rep_{\mathbf{k}}(A_\infty)$. 
\end{remark}

We will shortly use $\tau^{-\delta}$ to replace the delta shift in the definition of a $\delta$-interleaving. First, however, we need to define $\delta$-transition morphisms. Again we start by recalling the usual definition. Let $\delta$ be a non-negative integer and consider a representation $(M,\phi)$. 

\begin{definition}
The $\delta$-transition morphism $\phi_M^\delta \colon  M \to M(\delta)$ is the morphism given by the linear maps \[ \phi_{x,x+\delta} :=  \phi_{x+\delta-1,x+\delta}  \circ \dots \circ  \phi_{x+1,x+2} \circ \phi_{x,x+1} \colon  M_x \too M_{x+\delta} = M(\delta)_x,\]
for each $x \in [1,n+1)$.
\end{definition}
\noindent It follows from this definition that $\phi_M^\delta$ factorises as $\phi_M^\delta = \phi_{M(\delta-1)}^1 \circ \dots \circ \phi_{M(1)}^1 \circ \phi_M^1$.

    Furthermore, if $M= \bigoplus_{i=1}^k M_i$ is decomposable, then $\phi_M^\delta$ is diagonal and restricts to the morphism $\phi_{M_i}^\delta \colon M_i \to M_i(\delta)$ on each summand. Therefore for arbitrary $M$ and $\delta \geq 0$, we see that $\phi_M^\delta$ is determined by knowing $\phi_N^1$ for each indecomposable object $N$.

\begin{lemma} \label{transisionsaresame}
Let $Q$ be an equioriented quiver of type $A_n$ and let $N$ be an indecomposable representation, considered as an object in the standard heart $(\rep_{\mathbf{k}}(Q))[0] \subseteq D^b(\rep_{\mathbf{k}}(Q))$. If $N$ is not injective, so $\tau^{-1}N \in (\rep_{\mathbf{k}}(Q))[0]$ then \[\phi_N^1 = \Phi_N\]
where $\Phi_N$ is the map defined in Definition~\ref{phidefn}.
    \end{lemma}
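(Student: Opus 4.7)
The plan is to compute both sides of the claimed equality explicitly on indecomposable interval representations, using Gabriel's theorem together with the explicit form of the Auslander-Reiten mesh in the equioriented $A_n$ case recorded in Remark~\ref{AtypeARmesh}.

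First, by Theorem~\ref{Gabriel} I may assume $N = V_{[a,b)}$ for some $1 \leq a < b \leq n+1$. In the equioriented orientation, the indecomposable injectives are exactly the $V_{[1,b)}$, so the hypothesis that $\tau^{-1} N$ lies in the standard heart forces $a > 1$; by Lemma~\ref{deltaistau} this gives $\tau^{-1} N = V_{[a-1,b-1)}$. I would then compute $\phi_N^1 \colon N \to N(1) = \tau^{-1}N$ directly from its definition: its component at vertex $x$ is $\phi_{x,x+1} \colon N_x \to N_{x+1} = N(1)_x$, which is the identity on $\mathbf{k}$ when $\{x,x+1\} \subseteq [a,b)$ (that is, when $a \leq x \leq b-2$) and is zero otherwise.

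Next I would compute $\Phi_N = g_1 \circ f_1$ using the explicit form of the AR mesh from Remark~\ref{AtypeARmesh}, namely
\[
  E_1 = V_{[a-1,b)}, \qquad E_2 = V_{[a,b-1)}, \qquad \tau^{-1} N = V_{[a-1,b-1)},
\]
with $f_1 \colon N \hookrightarrow E_1$ the canonical inclusion of interval representations and $g_1 \colon E_1 \twoheadrightarrow \tau^{-1}N$ the canonical restriction. Both of these are (for an appropriate choice of scaling in the $\mathbf{k}^*$-family defining the AR mesh) the identity on each one-dimensional vector space in the overlap of their supports and zero elsewhere. Composing, $g_1 \circ f_1$ is therefore the identity at vertices $a \leq x \leq b-2$ and zero at all other vertices, which coincides componentwise with $\phi_N^1$.

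The only subtlety — and the potential obstacle — is bookkeeping for the $\mathbf{k}^*$ ambiguity in $\Phi_N$: different choices of irreducible maps $f_1, f_2, g_1, g_2$ in the mesh rescale $\Phi_N$ by an element of $\mathbf{k}^*$, and one must verify that the canonical inclusion/restriction representatives used above are a legitimate choice (equivalently, that $g_1 \circ f_1$ and $-g_2 \circ f_2$ agree once the mesh relation is imposed). This is straightforward once written out, since in the equioriented $A_n$ case both factorisations of $\Phi_N$ through $E_1$ and through $E_2$ reduce, after rescaling, to the identity on the overlap $[a,b-1)$. Hence $\phi_N^1$ is a valid representative of $\Phi_N$, giving the desired equality.
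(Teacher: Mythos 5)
Your proof is correct and takes essentially the same route as the paper: the paper's proof is a one-line citation of Remark~\ref{AtypeARmesh}, and you have simply unpacked that reference explicitly via Gabriel's theorem and the interval description of the mesh maps $f_1$ (inclusion into $V_{[a-1,b)}$) and $g_1$ (restriction onto $V_{[a-1,b-1)}$). Your closing remark about the $\mathbf{k}^*$-ambiguity is the right thing to keep in mind, since $\Phi_N$ is by Definition~\ref{phidefn} only well defined up to scaling, so the assertion $\phi_N^1 = \Phi_N$ should be read as saying $\phi_N^1$ is a valid representative of that $\mathbf{k}^*$-orbit; the paper does not belabour this, but your observation is consistent with it.
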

\begin{proof}
    This follows from Remark~\ref{AtypeARmesh}, noting that $\Phi_N$ is the composition of two morphisms in the mesh starting at $N$.
\end{proof}
Motivated by this lemma and the preceding paragraph, we make the following definition.

\begin{definition}
 Let $\delta \in \IZ_{\geq 0}$ and let $N$ be an indecomposable representation, considered as an object in the standard heart $(\rep_{\mathbf{k}}(Q))[0] \subseteq D^b(\rep_{\mathbf{k}}(Q))$. If $\delta=0$, we define $\Phi^0_N$ to be the identity morphism $\operatorname{id}_N: N \to N$. Otherwise we define $\Phi^\delta_N: N \to \tau^{-\delta} N$ to be the composition \[ \Phi^\delta_N = \Phi_{\tau^{-(\delta-1)} N}\circ \dots \circ \Phi_{\tau^{-1} N} \circ \Phi_N.\] 
We can extend this to decomposable objects $N$ such that the restriction of $\Phi^\delta_N$ to any indecomposable summand $N'$ is given by $\Phi^\delta_{N'}$.
\end{definition}


We can then write down the following immediate corollary of Lemma~\ref{transisionsaresame}, showing that in the equioriented $A_n$ case, this generalises the $\delta$-transition morphism, when
resrticted to a certain subset of objects.
\begin{corollary} \label{transsame}
    Let $Q$ be an equioriented quiver of type $A_n$ and let $N$ be a representation, considered as an object in the standard heart $(\rep_{\mathbf{k}}(Q))[0] \subseteq D^b(\rep_{\mathbf{k}}(Q))$. If $\tau^{-k} N$ 
    is also in the standard heart, then \[\phi_N^k = \Phi_N^k.\]
\end{corollary}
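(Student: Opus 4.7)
The plan is to reduce the corollary to Lemma~\ref{transisionsaresame} by exploiting additivity and factoring both sides into compositions of one-step maps. The $\delta$-transition morphism is diagonal on direct sum decompositions (as noted just before Lemma~\ref{transisionsaresame}) and $\Phi_N^k$ is defined summand-wise by construction. Since $\tau$ is additive, the assumption $\tau^{-k}N \in (\rep_{\mathbf{k}}(Q))[0]$ restricts to each indecomposable summand of $N$, so it suffices to treat the case where $N$ is indecomposable.

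Assuming $N$ is indecomposable with $\tau^{-k}N$ in the standard heart, I would use the factorisations
\[ \phi_N^k = \phi_{N(k-1)}^1 \circ \cdots \circ \phi_{N(1)}^1 \circ \phi_N^1 \qquad \text{and} \qquad \Phi_N^k = \Phi_{\tau^{-(k-1)}N} \circ \cdots \circ \Phi_{\tau^{-1}N} \circ \Phi_N, \]
both of which follow directly from the definitions. The key intermediate claim is that for each $0 \leq j \leq k-1$, the module $\tau^{-j}N$ lies in the standard heart, is not injective, and satisfies $\tau^{-j}N = N(j)$. Indeed, if $\tau^{-j}N$ were injective as a module for some $j \leq k-1$, then by the discussion of the derived AR translate in the excerpt, $\tau^{-(j+1)}N$ would be of the form $P[1]$ and hence lie in $(\rep_{\mathbf{k}}(Q))[1]$. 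Inspecting the derived AR quiver (as illustrated in Figure~\ref{A4AR}), once an indecomposable leaves the standard heart under iteration of $\tau^{-1}$, it never returns; so $\tau^{-k}N$ would then also fail to lie in the standard heart, contradicting the hypothesis. Given non-injectivity at every intermediate step, Lemma~\ref{deltaistau} gives $\tau^{-j}N = N(j)$ for each $0 \leq j \leq k$.

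With this in hand, Lemma~\ref{transisionsaresame} applies to each factor in the two compositions, yielding $\phi_{N(j)}^1 = \Phi_{\tau^{-j}N}$ for every $0 \leq j \leq k-1$. Composing these identities produces the desired equality $\phi_N^k = \Phi_N^k$. The only real content of the argument is the inductive verification that no intermediate $\tau^{-j}N$ is injective, and I do not expect this to present any genuine obstacle; once established, the remainder is purely bookkeeping.
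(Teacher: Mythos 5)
Your proof is correct and takes essentially the approach the paper intends: the paper simply calls this an ``immediate corollary'' of Lemma~\ref{transisionsaresame}, relying on the summand-wise reduction and the factorisations of $\phi_N^k$ and $\Phi_N^k$ into one-step maps that are stated in the surrounding text. You have helpfully made explicit the one step the paper leaves tacit --- that the hypothesis $\tau^{-k}N$ being in the standard heart forces every intermediate $\tau^{-j}N$ ($0 \le j \le k-1$) to be a non-injective module, so that Lemma~\ref{deltaistau} and Lemma~\ref{transisionsaresame} apply at each stage.
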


We are finally in the position to write down the definition of $\delta$-interleaving that we will use in our more general setting. 

\begin{definition} \label{deltainterleaved} Let $\delta \in \IZ_{\geq 0}$. Two persistence modules $N, M$ are said to be $\delta$-interleaved if there exist morphisms $f: M \to \tau^{-\delta} N$ and $g: N \to \tau^{-\delta} M$, such that  $\tau^{-\delta}g\circ f = \Phi^{2\delta}_M $ and   $\tau^{-\delta}f\circ g = \Phi^{2\delta}_N $. \end{definition}

This generalises the usual definition, in the following sense. Let $Q$ be an equioriented quiver of type $A_n$. As in Remark~\ref{limitingtoZ}, for any $ k\geq 0 $ we can consider $\rep_{\mathbf{k}}(A_n)$ as the subcategory of $\rep_{\mathbf{k}}(A_{n+2k})$ consisting of representations supported on the subinterval $[1+k, n+k+1) \subset [1,n+2k+1)$.

\begin{proposition}\label{InterleavingCorresp}
    Let $M,N$ be persistence modules in $\rep_{\mathbf{k}}(A_n)$. There exists $k_0 \geq 0$ such that for any $k \geq k_0$, the persistence modules $N,M$ considered as objects in the standard heart of $D^b(\rep_{\mathbf{k}}(A_{n+2k}))$ are $\delta$-interleaved in the sense of Definition~\ref{deltainterleaved} if and only if they satisfy the classical definition of $\delta$-interleaved persistence modules.
\end{proposition}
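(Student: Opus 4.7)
The plan is to set $k_0 := 2\delta$ and check that, for all $k \geq k_0$, the two definitions of $\delta$-interleaving coincide once the appropriate identifications are made. Throughout, I view $M,N \in \rep_{\mathbf{k}}(A_n)$ as objects of $\rep_{\mathbf{k}}(A_{n+2k})$ supported on the sub-interval $[1+k, n+k+1) \subseteq [1, n+2k+1)$, as in Remark~\ref{limitingtoZ}.

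The first step is to observe that the lower bound $k \geq 2\delta$ is exactly what is needed for the support of $\tau^{-j} M$ (and of $\tau^{-j} N$) to remain within $[1, n+2k+1)$ for all $0 \leq j \leq 2\delta$, since each application of $\tau^{-1}$ shifts the support one step to the left. Iterated application of Lemma~\ref{deltaistau} then gives $\tau^{-j} M = M(j)$ and $\tau^{-j} N = N(j)$ inside the standard heart for all such $j$. The second step is to feed this into Corollary~\ref{transsame} to conclude $\Phi^{2\delta}_M = \phi^{2\delta}_M$ and $\Phi^{2\delta}_N = \phi^{2\delta}_N$. I would also note, in passing, that because $\tau^{-\delta}$ is an additive autoequivalence, for any morphism $g \colon N \to M(\delta)$ the classical shifted morphism $g(\delta) \colon N(\delta) \to M(2\delta)$ agrees with $\tau^{-\delta} g \colon \tau^{-\delta} N \to \tau^{-2\delta} M$ under the identifications of the previous step.

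The third step is then a direct translation of data. A classical $\delta$-interleaving supplies $f \colon M \to N(\delta)$ and $g \colon N \to M(\delta)$ with $g(\delta) \circ f = \phi^{2\delta}_M$ and $f(\delta) \circ g = \phi^{2\delta}_N$; under the identifications of steps one and two these read exactly as $\tau^{-\delta} g \circ f = \Phi^{2\delta}_M$ and $\tau^{-\delta} f \circ g = \Phi^{2\delta}_N$, i.e.\ as a $\delta$-interleaving in the sense of Definition~\ref{deltainterleaved}. Conversely, a $\delta$-interleaving in the derived sense consists of morphisms whose source and target both lie in the standard heart, so restricts to morphisms in $\rep_{\mathbf{k}}(A_{n+2k})$ of the required classical form satisfying the classical coherence relations.

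The main obstacle is not really substance but bookkeeping: one must verify carefully that $k_0 = 2\delta$ is indeed large enough so that every instance of $\tau^{-j}$ with $0 \leq j \leq 2\delta$ keeps the relevant objects in the heart, and track the identification of $g(\delta)$ with $\tau^{-\delta} g$ (which uses only that $\tau^{-\delta}$ is an autoequivalence and that Lemma~\ref{deltaistau} applies to all intermediate objects). Once this is in place, the claimed equivalence is immediate from Corollary~\ref{transsame} applied to $M$, $N$, and the composite morphisms.
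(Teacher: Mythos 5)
Your proposal is correct and follows essentially the same route as the paper's own proof: embed $M,N$ into $\rep_{\mathbf{k}}(A_{n+2k})$ with enough padding, invoke Lemma~\ref{deltaistau} to identify the $\delta$-shift with $\tau^{-\delta}$, invoke Corollary~\ref{transsame} to identify $\phi^{2\delta}$ with $\Phi^{2\delta}$, and then observe the two definitions become literal translations of each other. The only substantive difference is that you pin down $k_0 = 2\delta$ explicitly (which is indeed the smallest padding that keeps every $\tau^{-j}M$, $\tau^{-j}N$ with $0\leq j\leq 2\delta$ inside the standard heart, since each summand is supported in $[1+k,n+k+1)$ and each application of $\tau^{-1}$ moves the left endpoint down by one), whereas the paper merely says ``choose $k_0$ sufficiently big.'' Your extra remark identifying $g(\delta)$ with $\tau^{-\delta}g$ is appropriate and needed for the translation, though it deserves a word of justification beyond ``$\tau^{-\delta}$ is an additive autoequivalence'' — two functors agreeing on objects need not agree on morphisms in general; here one is rescued by the fact that $\Hom$-spaces between interval modules of equioriented $A_n$ are at most one-dimensional, so the identification on objects together with $\mathbf{k}$-linearity forces agreement on morphisms up to scalar. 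The paper glosses over exactly the same step, so this is a minor refinement rather than a gap.
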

\begin{proof}
     Choose $k_0$ sufficiently big such that for any $k\geq k_0$, $\tau^{-2\delta}  M$ and $\tau^{-2\delta}  N$ are both in the standard heart of $D^b(\rep_{\mathbf{k}}(A_{n+2k}))$. It follows from Lemma~\ref{deltaistau} and Corollary~\ref{transsame} that under these conditions, the $\delta$-shift and the $2\delta$-transition functions $\phi_M^{2\delta}$ and $\phi_N^{2\delta}$ coincide with $\tau^{-\delta}$, $\Phi_M^{2\delta}$ and $\Phi_N^{2\delta}$ respectively. Making these substitutions, we move between the usual definition of $\delta$-interleaved persistence modules and Definition~\ref{deltainterleaved}. 
\end{proof}

\begin{remark}   
The usual definition of $\delta$-interleaved persistence modules is only well-defined when the underlying quiver is equioriented of type $A$. However, Definition~\ref{deltainterleaved} can be applied more widely including to representations of quivers of type $A$ which are not equioriented, for example the zig-zag quiver, and persistence modules of type $\tilde{A}$. In fact, it would make sense to consider this definition for any hereditary gentle algebra.\end{remark} 

We use this definition to extend the notion of interleaving distance.

\begin{definition}
    We define a map $d_I \colon \operatorname{obj}(\mod{\mathbf{k}Q}) \times \operatorname{obj}(\mod{\mathbf{k}Q}) \to \IZ_{\geq 0} \cup \{\infty\} $ called the interleaving distance, by setting
\[ d_I(M,N) = 
   \inf\{ \delta \in \IZ_{\geq 0} \mid M,N \text{ are $\delta$-interleaved\}}
    \]
    where we use the convention that $\inf{\varnothing} = \infty$.
    \end{definition}

\subsection{Interleaving distance in the $\tilde{A}_n$ case} We now consider some properties of this interleaving distance in the $\tilde{A}_n$ case. First we observe that the $\Hom$-orthogonality properties of the category mean that we can look at objects with summands in different components of the Auslander-Reiten quiver separately. Let $M,N$ be any two persistence modules of type $\tilde{A}$. There are decompositions \[M= M_P \oplus M_R \oplus M_I, \quad N= N_P \oplus N_R \oplus N_I\] where the subscripts $P,R$ and $I$ denote summands in the preprojective, regular and preinjective components respectively. Some of these summands may be zero. Such decompositions are well defined by the Krull-Schmidt property.
\begin{lemma} \label{componentwise}
Persistence modules $M$ and $N$ are $\delta$-interleaved if and only if the pairs of summands $M_P$ and $N_P$,  $M_R$ and $N_R$, and $M_I$ and $N_I$ are all $\delta$-interleaved.
\end{lemma}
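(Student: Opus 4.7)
The $(\Leftarrow)$ direction is a direct construction: given $\delta$-interleavings $(f_X, g_X)$ of each pair $(M_X, N_X)$ for $X \in \{P,R,I\}$, set $f := f_P \oplus f_R \oplus f_I \colon M \to \tau^{-\delta} N$ and $g := g_P \oplus g_R \oplus g_I$, using that $\tau^{-\delta}$ commutes with finite direct sums. The identities $\tau^{-\delta} g \circ f = \Phi^{2\delta}_M$ and $\tau^{-\delta} f \circ g = \Phi^{2\delta}_N$ then follow block by block, since $\Phi^{2\delta}$ restricts summand-wise to its components by definition.

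The $(\Rightarrow)$ direction is the main content. Starting from morphisms $f \colon M \to \tau^{-\delta} N$ and $g \colon N \to \tau^{-\delta} M$ realising the $\delta$-interleaving, I plan to decompose them, via Krull-Schmidt in $D^b(\mod{\mathbf{k}Q})$, as matrices $(f_{YX})$ and $(g_{YX})$ with $f_{YX} \colon M_X \to \tau^{-\delta} N_Y$ and $g_{YX} \colon N_X \to \tau^{-\delta} M_Y$ indexed by $X,Y \in \{P,R,I\}$, and to show that the diagonal components $(f_{XX}, g_{XX})$ themselves form a $\delta$-interleaving of $(M_X, N_X)$ for each $X$.

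The crucial intermediate step is to show that $f$ and $g$ are block lower-triangular with respect to the ordering $P < R < I$, i.e.\ that the three ``upper'' entries $f_{PR}, f_{PI}, f_{RI}$ vanish, and similarly for $g$. To locate the relevant targets in the derived Auslander-Reiten quiver, I would use that $\tau$ is an autoequivalence preserving each connected component: since $N_P \in \mathcal{P}[0]$, $N_R \in \mathcal{R}[0]$ and $N_I \in \mathcal{I}[0]$, the indecomposable summands of $\tau^{-\delta} N_P$ lie in $\mathcal{P}[0] \cup \mathcal{I}[-1]$, those of $\tau^{-\delta} N_R$ remain in $\mathcal{R}[0]$, and those of $\tau^{-\delta} N_I$ lie in $\mathcal{I}[0] \cup \mathcal{P}[1]$. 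On each summand, the vanishing follows from the module-category orthogonality $\Hom(\mathcal{I}, \mathcal{P}) = \Hom(\mathcal{I}, \mathcal{R}) = \Hom(\mathcal{R}, \mathcal{P}) = 0$ recalled above, together with $\Hom_{D^b(\mod{\mathbf{k}Q})}(A, B[-1]) = \Ext^{-1}(A,B) = 0$ for the shifted summands in $\mathcal{I}[-1]$. With this in hand, matrix multiplication gives the $(X,X)$-block of $\tau^{-\delta} g \circ f$ as $\tau^{-\delta} g_{XX} \circ f_{XX}$, and comparison with the block-diagonal morphism $\Phi^{2\delta}_M = \bigoplus_X \Phi^{2\delta}_{M_X}$ yields the first half of the interleaving identity for $(M_X, N_X)$; the symmetric computation with $\tau^{-\delta} f \circ g = \Phi^{2\delta}_N$ gives the other half.

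The main obstacle is the triangularity step: the module-category orthogonality does not transfer verbatim to the derived category, because $\tau^{-\delta}$ shifts certain summands out of the heart. One must verify that the only indecomposable summands produced by $\tau^{-\delta}$ lie either in the heart (where the orthogonality applies directly) or in shifts of the form $[-1]$ (where the negative-Ext vanishing takes over), and that no other shifted components arise. Once this has been checked using the structure of the derived AR quiver described in the preliminaries, the rest of the argument is bookkeeping on $3 \times 3$ block matrices.
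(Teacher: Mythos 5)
Your overall strategy matches the paper's proof exactly: both directions proceed via the block-matrix decomposition of $f$ and $g$ with respect to the three component types, the crucial step being the block lower-triangularity, after which the diagonal entries of $\tau^{-\delta}g \circ f$ and $\tau^{-\delta}f \circ g$ are compared with the block-diagonal maps $\Phi^{2\delta}_M$ and $\Phi^{2\delta}_N$.

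However, your analysis of the ``main obstacle'' contains a directional mix-up that, if pursued literally, would lead you astray. You claim that the summands of $\tau^{-\delta}N_P$ lie in $\mathcal{P}[0]\cup\mathcal{I}[-1]$, and that one must verify no shifted components other than $[-1]$ arise. Both claims are backwards. In the derived AR quiver, $\tau$ moves preprojectives towards the injective shifts and $\tau^{-1}$ moves preinjectives towards the projective shifts: concretely, $\mathcal{P}\cong(-\IN)Q^{\mathrm{op}}$ is closed under $\tau^{-1}$ (the translate $\tau^{-\delta}(n,x)=(n-\delta,x)$ keeps $n-\delta\le 0$), so $\tau^{-\delta}N_P$ stays in $\mathcal{P}[0]$ for all $\delta\ge 0$, and it is $\tau^{-\delta}N_I$ that can leave the heart, landing in $\mathcal{P}[1]$ -- a positive shift, not a negative one. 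So your proposed verification (``only shifts of the form $[-1]$ arise'') would fail. The reason the argument nevertheless works is that the three entries you need to kill, $f_{PR},f_{PI},f_{RI}$ in your notation, all have targets inside $\tau^{-\delta}N_P$ or $\tau^{-\delta}N_R$, and since $\tau^{-\delta}$ preserves both $\mathcal{P}[0]$ and $\mathcal{R}[0]$ within the heart, the module-category orthogonality $\Hom(\mathcal{R},\mathcal{P})=\Hom(\mathcal{I},\mathcal{P})=\Hom(\mathcal{I},\mathcal{R})=0$ applies directly with no excursion into shifted components. The possible shift into $\mathcal{P}[1]$ only affects the diagonal entry $f_{II}$, which you are keeping, not killing. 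Once this is corrected, the rest of the bookkeeping is exactly as in the paper and the $\Ext^{-1}$ vanishing you invoked is never actually needed.
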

\begin{proof}
Suppose $M$ and $N$ are $\delta$-interleaved with respect to morphisms $f: M \to \tau^{-\delta} N$ and $g: N \to \tau^{-\delta} M$. Using the decomposition, we can write these as matrices 

\[ f = \begin{pmatrix}
f_{PP} & 0 & 0\\
f_{PR} & f_{RR} & 0\\
f_{PI} & f_{RI} & f_{II}
\end{pmatrix},   \qquad  g = \begin{pmatrix}
g_{PP} & 0 & 0\\
g_{PR} &g_{RR} & 0\\
g_{PI} & g_{RI} & g_{II}
\end{pmatrix}  \]
where, for example, $f_{PI} : M_P \to  \tau^{-\delta} N_I $ denotes the composition of $f$ with the inclusion of  $M_P \hookrightarrow M$ and the projection $ \tau^{-\delta}N \twoheadrightarrow  \tau^{-\delta}N_I$. We observe that the matrix is lower triangular, since there are no nonzero morphisms from an object in the regular component to any object in the preprojective component and from an object in the preinjective component to any other component.

By definition, $\Phi^{2\delta}_M $ restricts to $\Phi^{2\delta}_{M'} $ on any summand $M'$ of $M$, so
\[ \tau^{-\delta}g\circ f = \begin{pmatrix}
\tau^{-\delta}g_{PP}\circ f_{PP} & 0 & 0\\
* & \tau^{-\delta}g_{RR}\circ f_{RR} & 0\\
* &* & \tau^{-\delta}g_{II}\circ f_{II}
\end{pmatrix} = 
\begin{pmatrix}
 \Phi^{2\delta}_{M_P} & 0 & 0\\
0 &  \Phi^{2\delta}_{M_R} & 0\\
0 &0 & \Phi^{2\delta}_{M_{I}}
\end{pmatrix}
 \]
and similarly for $ \tau^{-\delta}f\circ g $.
Comparing diagonal entries, we conclude that $M_P$ and $N_P$ are $\delta$-interleaved with respect to the morphisms $f_{PP}$ and $g_{PP}$ and similarly,  $M_R$ and $N_R$, and $M_I$ and $N_I$ are also $\delta$-interleaved.

Conversely, suppose that $M_P$ and $N_P$, $M_R$ and $N_R$, and $M_I$ and $N_I$ are $\delta$-interleaved with respect to morphisms $f_P$ and $g_P$, $f_R$ and $g_R$, and $f_I$ and $g_I$ respectively. Then $M$ and $N$ are $\delta$-interleaved with respect to morphisms:
\[ f = \begin{pmatrix}
f_{P} & 0 & 0\\
0 & f_{R} & 0\\
0 &0 & f_{I}
\end{pmatrix},   \qquad  g = \begin{pmatrix}
g_{P} & 0 & 0\\
0 &g_{R} & 0\\
0 &0 & g_{I}
\end{pmatrix}.  \]

\end{proof}

\begin{corollary}
    Let $M= M_P \oplus M_R \oplus M_I$ and $N= N_P \oplus N_R \oplus N_I$ be as above,
    then \[ d_I(M,N) = \max\{ d_I(M_P,N_P),d_I(M_R,N_R),d_I(M_I,N_I) \}.\]
\end{corollary}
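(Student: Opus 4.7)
The plan is to reduce the corollary to Lemma~\ref{componentwise} together with a monotonicity observation about $\delta$-interleaving. The essential content is that the Lemma already describes which $\delta$ witness an interleaving of $M$ and $N$ in terms of the summands; the corollary is then a bookkeeping statement about infima of upward-closed subsets of $\IZ_{\geq 0}$.

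First I would establish monotonicity: if $X$ and $Y$ are $\delta$-interleaved, then they are $\delta'$-interleaved for every $\delta'\geq \delta$. Given witnessing morphisms $f\colon X\to \tau^{-\delta}Y$ and $g\colon Y\to \tau^{-\delta}X$, set $e=\delta'-\delta$ and define
\[ f' = \Phi^{e}_{\tau^{-\delta}Y}\circ f\colon X\to \tau^{-\delta'}Y,\qquad g' = \Phi^{e}_{\tau^{-\delta}X}\circ g\colon Y\to \tau^{-\delta'}X. \]
The required identities $\tau^{-\delta'}g'\circ f' = \Phi^{2\delta'}_X$ and $\tau^{-\delta'}f'\circ g' = \Phi^{2\delta'}_Y$ then follow from the factorisation $\Phi^{a+b}_Z = \Phi^{b}_{\tau^{-a}Z}\circ \Phi^{a}_Z$ built into the definition of the morphisms $\Phi^k$, together with the fact that $\tau^{-1}$ is an autoequivalence of $D^b(\mod{\mathbf{k}Q})$, so commutes with such compositions.

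Given monotonicity, the set $S(X,Y) = \{\delta\in \IZ_{\geq 0}\mid X\text{ and }Y\text{ are $\delta$-interleaved}\}$ is either empty or of the form $\{\delta \in \IZ_{\geq 0} : \delta \geq d\}$ for some smallest $d \in \IZ_{\geq 0}$. Lemma~\ref{componentwise} says exactly that
\[ S(M,N) = S(M_P,N_P)\cap S(M_R,N_R)\cap S(M_I,N_I). \]
Since the intersection of finitely many up-sets (or the empty set) is again an up-set (or empty), its smallest element equals the maximum of the smallest elements of the three factors, with the convention that $\max$ is $\infty$ whenever any summand is $\infty$. Taking infima on both sides yields the claimed identity.

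The main obstacle is the monotonicity step, because the naturality identities needed to verify $\tau^{-\delta'}g'\circ f' = \Phi^{2\delta'}_X$ from the chain rule for $\Phi^k$ are not spelled out in the preliminaries and must be argued carefully from the mesh relations. Once this is in hand, the proof is purely order-theoretic.
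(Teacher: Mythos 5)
Your proof is correct. The paper states this corollary without proof, treating it as an immediate consequence of Lemma~\ref{componentwise}, but you are right that it is not literally immediate: the lemma identifies $S(M,N)$ with the intersection $S(M_P,N_P)\cap S(M_R,N_R)\cap S(M_I,N_I)$, and to pass from this to the claimed identity of infima one needs precisely the monotonicity you establish, i.e.\ that each $S(X,Y)$ is an up-set in $\IZ_{\geq 0}$. Your construction of $f'=\Phi^{e}_{\tau^{-\delta}Y}\circ f$ and $g'=\Phi^{e}_{\tau^{-\delta}X}\circ g$ is the natural way to do this, and the verification $\tau^{-\delta'}g'\circ f'=\Phi^{2\delta'}_X$ goes through by combining the chain rule $\Phi^{a+b}_Z=\Phi^{b}_{\tau^{-a}Z}\circ\Phi^{a}_Z$ (which, as you note, is built into the definition of $\Phi^k$), Lemma~\ref{commrel} applied to $\tau^{-\delta}g$ to move $\Phi^{e}_{\tau^{-\delta}Y}$ past $\tau^{-\delta'}g$, and the naturality $\tau^{-k}\Phi^{j}_X=\Phi^{j}_{\tau^{-k}X}$. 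That last identity is indeed not spelled out in the preliminaries, but your caution is somewhat moot in context: the paper's own proof of Proposition~\ref{Interleavingmetric} already uses it without comment (for instance in the step $\tau^{-(\delta_1+\delta_2)}g_2\circ\tau^{-\delta_1}f_2=\Phi^{2\delta_2}_{\tau^{-\delta_1}N}$ and again in the final step), so whatever justification the authors accept there — that $\tau^{-1}$ is an autoequivalence sending AR-triangles to AR-triangles, so it carries the mesh at $M$ to the mesh at $\tau^{-1}M$ up to a scalar that is absorbed in the convention of Definition~\ref{phidefn} — applies equally here. With monotonicity in hand, your order-theoretic conclusion (the infimum of an intersection of up-sets in $\IZ_{\geq 0}$ is the maximum of the infima, with the $\inf\varnothing=\infty$ convention handled correctly) is exactly right. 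This is a clean and rigorous way to supply what the paper leaves implicit.
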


Using this result, we can restrict ourselves to considering persistence modules whose summands all lie in the same component when calculating interleaving distance.

We now prove a technical lemma, before concluding this section by showing that the interleaving distance, as we have defined it, is a metric.

\begin{lemma} \label{commrel}
    Let $h:M \to N $ be a morphism between two persistence modules in the same component of the Auslander-Reiten quiver, and let $k \in \IZ_{\geq 0}$. Then \[ \tau^{-k} h \circ \Phi_{M}^k = \Phi_{N}^k \circ h.\]
\end{lemma}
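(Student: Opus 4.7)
The plan is to reduce the identity to the case where $k=1$ and $h$ is an irreducible morphism between indecomposables in the AR quiver, at which point the claim follows directly from the mesh relations.

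First, I would reduce to $k=1$ by induction on $k$. The case $k=0$ is trivial, and for the inductive step the key identity is $\Phi_M^k = \Phi_{\tau^{-(k-1)} M} \circ \Phi_M^{k-1}$ (and similarly for $N$). Since $\tau$ is an autoequivalence of the derived category it preserves AR components, so $\tau^{-(k-1)} M$ and $\tau^{-(k-1)} N$ stay in the same component. Applying the $k=1$ case to the morphism $\tau^{-(k-1)} h$ and then the inductive hypothesis gives
\[
\tau^{-k} h \circ \Phi_M^k
= \tau^{-1}(\tau^{-(k-1)} h) \circ \Phi_{\tau^{-(k-1)} M} \circ \Phi_M^{k-1}
= \Phi_{\tau^{-(k-1)} N} \circ \tau^{-(k-1)} h \circ \Phi_M^{k-1}
= \Phi_N^k \circ h.
\]

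Next, I would reduce the $k=1$ case to the situation where $M$ and $N$ are indecomposable and $h$ is irreducible. The operations $\Phi_M$ and $\Phi_N$ act diagonally on Krull-Schmidt decompositions, and both sides of the equation are $\mathbf{k}$-linear in $h$, so the equation reduces to its restrictions between pairs of indecomposable summands. Since the AR components in our setting (preprojective, preinjective and the tubes) are standard, $\mathrm{Hom}(M,N)$ is then spanned by compositions of irreducible morphisms, and the claim is stable under composition: if $\tau^{-1} h_1 \circ \Phi_M = \Phi_X \circ h_1$ and $\tau^{-1} h_2 \circ \Phi_X = \Phi_N \circ h_2$, the two naturality squares chain to give it for $h_2 \circ h_1$. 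So it suffices to treat an irreducible $h$.

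For the base case, let $M$ be indecomposable with AR mesh
\[
M \xrightarrow{\binom{f_1}{f_2}} E_1 \oplus E_2 \xrightarrow{(g_1,\, g_2)} \tau^{-1} M,
\]
so $\Phi_M = g_1 \circ f_1$, and suppose $h = f_1 : M \to E_1$ (the case $h=f_2$ is analogous). Since $g_1 : E_1 \to \tau^{-1} M$ is an arrow in the AR quiver, it appears (up to scalar) as one of the irreducible maps out of $E_1$ in the mesh at $E_1$. Likewise $\tau^{-1} f_1 : \tau^{-1} M \to \tau^{-1} E_1$ is the image of $f_1$ under the autoequivalence $\tau^{-1}$, and so it is (up to scalar) the corresponding arrow completing that mesh. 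With compatible scalar choices, $\Phi_{E_1} = \tau^{-1}(f_1) \circ g_1$, whence
\[
\Phi_{E_1} \circ h = \tau^{-1}(f_1) \circ g_1 \circ f_1 = \tau^{-1} h \circ \Phi_M.
\]

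The main obstacle is the scaling bookkeeping in the base case: the maps $\Phi_M$ are each defined only up to a nonzero scalar, so the lemma really asserts that one can choose these scalars coherently across each AR component so the equation holds on the nose. Verifying that such a coherent choice exists — for instance, by fixing representatives $\Phi_M$ on a fundamental domain under $\tau$ and then propagating via $\tau^{-1}$ and the mesh relations — is where the standardness of the AR components is used essentially.
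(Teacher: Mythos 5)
Your proof is correct and follows essentially the same strategy as the paper: reduce to indecomposable objects and (by standardness of the component) to compositions of irreducible morphisms, verify the base case via the AR mesh relations, and extend to general $k$ by chaining the $k=1$ squares via functoriality of $\tau$. The only differences are presentational (you reduce $k$ to $1$ before reducing to irreducibles, the paper does the reverse) and that you explicitly flag the need for a coherent $\tau$-equivariant normalisation of the irreducible maps, a point the paper treats implicitly.
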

\begin{proof}
    First we assume that $M$ and $N$ are indecomposable. If $k=0$ the result holds trivially, so we initially consider the case where $k=1$. Using the stability of the component, the morphism $h$ is a linear combination of basis morphisms and each of these is a finite composition of irreducible morphisms in the component. If $h_0:M \to N $ is an irreducible morphism then there are two possibilities:
\begin{figure}[h]
\centering
\definecolor{ududff}{rgb}{0,0,0}
\begin{tikzpicture}[line cap=round,line join=round,>=triangle 45,x=1cm,y=1cm]
\draw [->,line width=1pt] (2.2,2.2) -- (2.8,2.8);
\draw [->,line width=1pt] (3.2,2.8) -- (3.8,2.2);
\draw [->,line width=1pt] (2.2,1.8) -- (2.8,1.2);
\draw [->,line width=1pt] (3.2,1.2) -- (3.8,1.8);
\draw [->,line width=1pt] (4.2,2.2) -- (4.8,2.8);
\draw [fill=ududff] (2,2) node {$ M$};
\draw [fill=ududff] (3,3) node {$N$};
\draw [fill=ududff] (3,1) node {$E_2$};
\draw [fill=ududff] (4,2) node {$\tau^{-1}M$};
\draw [fill=ududff] (5,3) node {$\tau^{-1}N$};
\begin{scriptsize}
\draw[color=black] (2.3,2.7) node {$h_0$};
\draw[color=black] (4.8,2.3) node {$\tau^{-1}h_0$};
\draw[color=black] (3.6,2.7) node {$g$};
\end{scriptsize}

\draw [->,line width=1pt] (7.2,2.2) -- (7.8,2.8);
\draw [->,line width=1pt] (8.2,2.8) -- (8.8,2.2);
\draw [->,line width=1pt] (7.2,1.8) -- (7.8,1.2);
\draw [->,line width=1pt] (8.2,1.2) -- (8.8,1.8);
\draw [->,line width=1pt] (9.2,1.8) -- (9.8,1.2);
\draw [fill=ududff] (7,2) node {$M$};
\draw [fill=ududff] (8,3) node {$E_1$};
\draw [fill=ududff] (8,1) node {$N$};
\draw [fill=ududff] (9,2) node {$\tau^{-1}M$};
\draw [fill=ududff] (10,1) node {$\tau^{-1}N$};
\begin{scriptsize}
\draw[color=black] (7.3,1.3) node {$h_0$};
\draw[color=black] (8.6,1.3) node {$g$};
\draw[color=black] (9.8,1.7) node {$\tau^{-1}h_0$};
\end{scriptsize}

\end{tikzpicture}
\end{figure}

 \noindent Using the mesh relations, it is clear that in both cases $\tau^{-1}h_0 \circ \Phi_{M} = \tau^{-1}h_0 \circ g \circ h_0 = \Phi_{N} \circ h_0$. Working inductively, we suppose that $\tau^{-1}h \circ \Phi_{E} = \Phi_{N} \circ h$ for any $h:E \to N$ which is the composition of $n$ irreducible morphisms and that $h' = h\circ h_0$ is a composition of $n+1$ irreducible morphisms where $h_0: M \to E$ is irreducible. Using the induction hypothesis and the functoriality of $\tau$ we see that
 \[ \tau^{-1}h' \circ \Phi_{ M} = \tau^{-1}h\circ \tau^{-1}h_0 \circ \Phi_{M}  = \tau^{-1}h\circ \Phi_{ E} \circ  h_0 =  \Phi_{ N} \circ  h\circ h_0 =\Phi_{N} \circ h'. \]
 In the case when $k=1$ and $M$ and $N$ are indecomposable, the result then follows by induction, and linearity. For any $k \geq 0$ we recall that $\Phi^k_M = \Phi_{\tau^{-k+1}M}\circ \dots \circ \Phi_{ \tau^{-1}M} \circ \Phi_M$. Therefore,
\begin{align*}
\tau^{-k}h \circ \Phi_{ M}^k &= \tau^{-k}h\circ \Phi_{\tau^{-k+1}M} \circ \Phi_{\tau^{-k+2}M} \circ \dots \circ \Phi_{ \tau^{-1}M} \circ \Phi_M\\ 
&=  \Phi_{\tau^{-k+1}N}\circ \tau^{-k+1}h\circ \Phi_{\tau^{-k+2}M} \circ \dots \circ \Phi_{ \tau^{-1}M} \circ \Phi_M \\&= \dots \\
&= \Phi_{\tau^{-k+1}N} \circ \Phi_{\tau^{-k+2}N} \circ \dots \circ \Phi_{ \tau^{-1}N} \circ \Phi_N \circ h  \\
&= \Phi_N^k \circ h
\end{align*} 
Finally, suppose that $M = \bigoplus_{i \in I}M_i$ and $N = \bigoplus_{j \in J}N_j$ are written as a sum of indecomposable objects. Using this decomposition, we can write the map $h:M \to N$ as a matrix with entries $h_{ji}: M_i \to N_j$. A quick calculation, applying the statement for indecomposable objects entry-wise, yields the desired general result.
\end{proof}

\begin{proposition}\label{Interleavingmetric}
    The interleaving distance is a (possibly infinite) metric on $\operatorname{obj}(\mod{\mathbf{k}Q}) $.
\end{proposition}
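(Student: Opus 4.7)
The plan is to verify the four properties required of a (possibly infinite) metric on isomorphism classes of objects: non-negativity, the identity of indiscernibles, symmetry, and the triangle inequality. Non-negativity holds because $\delta$ ranges over $\IZ_{\geq 0}$, and symmetry is built into Definition~\ref{deltainterleaved}, as swapping the roles of $(f,g)$ shows that any $\delta$-interleaving of $M$ and $N$ is simultaneously a $\delta$-interleaving of $N$ and $M$.

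For the identity of indiscernibles, taking $\delta=0$ and $f=g=\operatorname{id}_M$ gives a $0$-interleaving of $M$ with itself, since $\Phi^0_M=\operatorname{id}_M$ by definition, so $d_I(M,M)=0$. Conversely, since the infimum in the definition of $d_I$ is taken over non-negative integers, $d_I(M,N)=0$ forces a genuine $0$-interleaving, whose morphisms $f\colon M\to N$ and $g\colon N\to M$ are mutually inverse, so $M\cong N$.

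The triangle inequality is the crux. Suppose $M,N$ are $\delta_1$-interleaved via $(f_1,g_1)$ and $N,P$ are $\delta_2$-interleaved via $(f_2,g_2)$. I would first use Lemma~\ref{componentwise} to reduce to the case where $M,N,P$ have all their indecomposable summands in a single component of the AR quiver (vacuous in type $A$), since Lemma~\ref{commrel} is stated within a fixed component. Then the natural candidate morphisms witnessing a $(\delta_1+\delta_2)$-interleaving of $M$ and $P$ are the composites
\[ f \coloneqq \tau^{-\delta_1} f_2 \circ f_1 \colon M \too \tau^{-(\delta_1+\delta_2)} P, \qquad g \coloneqq \tau^{-\delta_2} g_1 \circ g_2 \colon P \too \tau^{-(\delta_1+\delta_2)} M. \]
To verify $\tau^{-(\delta_1+\delta_2)}g\circ f = \Phi^{2(\delta_1+\delta_2)}_M$, expand the left-hand side using functoriality of $\tau$ and substitute the second interleaving identity $\tau^{-\delta_2} g_2 \circ f_2 = \Phi^{2\delta_2}_N$ in the middle to obtain
\[ \tau^{-(\delta_1+2\delta_2)} g_1 \circ \tau^{-\delta_1}\Phi^{2\delta_2}_N \circ f_1. \]
The key step is to commute $\tau^{-\delta_1}\Phi^{2\delta_2}_N = \Phi^{2\delta_2}_{\tau^{-\delta_1}N}$ past $f_1$, which is precisely the content of Lemma~\ref{commrel} applied to $f_1\colon M\to \tau^{-\delta_1}N$ with $k=2\delta_2$. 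After this commutation, the expression collapses to $\tau^{-2\delta_2}(\tau^{-\delta_1} g_1\circ f_1)\circ \Phi^{2\delta_2}_M = \tau^{-2\delta_2}\Phi^{2\delta_1}_M\circ \Phi^{2\delta_2}_M = \Phi^{2\delta_1}_{\tau^{-2\delta_2}M}\circ \Phi^{2\delta_2}_M = \Phi^{2(\delta_1+\delta_2)}_M$, using the composition property built into the definition of $\Phi^\bullet$. The dual identity $\tau^{-(\delta_1+\delta_2)}f\circ g = \Phi^{2(\delta_1+\delta_2)}_P$ is symmetric, so $d_I(M,P)\leq \delta_1+\delta_2$, and passing to infima gives the triangle inequality.

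The main obstacle is the careful bookkeeping of the various $\tau$-shifts, and specifically confirming the identity $\tau^{-k}\Phi^j_N = \Phi^j_{\tau^{-k}N}$, which is not stated explicitly in the excerpt but follows from the definition of $\Phi$ via the AR mesh together with functoriality of $\tau$. The scalar ambiguity inherent in $\Phi$ is harmless here: it is already absorbed by the flexibility allowed in Definition~\ref{deltainterleaved}, in which one only needs to choose consistent representatives of the morphisms $\Phi_{(-)}$ so that the interleaving equalities hold on the nose.
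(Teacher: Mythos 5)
Your proposal is correct and follows essentially the same route as the paper: the same candidate composites $f=\tau^{-\delta_1}f_2\circ f_1$ and $g=\tau^{-\delta_2}g_1\circ g_2$, the same reduction to a single AR component via Lemma~\ref{componentwise}, the same use of functoriality of $\tau$ together with Lemma~\ref{commrel} to shuttle the $\Phi$-maps past the interleaving morphisms, and the same collapse via $\Phi^{2\delta_1}_{\tau^{-2\delta_2}M}\circ\Phi^{2\delta_2}_M=\Phi^{2(\delta_1+\delta_2)}_M$. Your explicit flagging of the identity $\tau^{-k}\Phi^j_N=\Phi^j_{\tau^{-k}N}$ as an unstated but necessary consequence of functoriality is a small but welcome clarification of a step the paper uses silently.
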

\begin{proof}
    Let $M$ be any persistence module. It is a consequence of the definition that $M$ is $0$-interleaved with itself, so $d_I(M,M) = 0$. Conversely, if $d_I(M,N) = 0$, then the definition of $0$-interleaved implies that $M$ and $N$ are isomorphic. Therefore, if $M \ncong N$ then $d_I(M,N) > 0$. The definition is symmetric, so it remains to show that the triangle inequality holds. 

    Let $\delta_1$, $\delta_2$ be finite and suppose $M,N$ are $\delta_1$-interleaved with respect to the morphisms \[ f_1: M \to \tau^{-\delta_1}N \text{ and } g_1: N \to \tau^{-\delta_1}M \] and $N,P$ are $\delta_2$-interleaved with respect to \[ f_2: N \to \tau^{-\delta_2}P  \text{  and } g_2: P \to \tau^{-\delta_2}N.\] 
    We show that $M,P$ are $(\delta_1+\delta_2)$-interleaved. By Lemma~\ref{componentwise}
    , we may assume without loss of generality that $M,N, P$ lie in the same component of the Auslander-Reiten quiver, and the morphisms $f_1, f_2, g_1, g_2$ all factor through objects in the same component. Consider the compositions \[ f= \tau^{-\delta_1}f_2 \circ f_1: M \to \tau^{-(\delta_1+\delta_2)}P \; \text{ and } \; g= \tau^{-\delta_2}g_1 \circ g_2: P \to \tau^{-(\delta_1+\delta_2)}M.\] 
    Using the functoriality of $\tau$, and Lemma~\ref{commrel}
\begin{align*}
    \tau^{-(\delta_1+\delta_2)}g \circ f = \tau^{-(\delta_1+\delta_2)}(\tau^{-\delta_2}g_1 \circ g_2)\circ \tau^{-\delta_1}f_2 \circ f_1 &= \tau^{-(\delta_1+2\delta_2)}g_1 \circ \tau^{-(\delta_1+\delta_2)}g_2\circ \tau^{-\delta_1}f_2 \circ f_1 \\
    &= \tau^{-(\delta_1+2\delta_2)}g_1 \circ \Phi_{\tau^{-\delta_1}N}^{2\delta_2} \circ f_1 \\
     &= \tau^{-(\delta_1+2\delta_2)}g_1 \circ \tau^{-2\delta_2}f_1 \circ  \Phi_{M}^{2\delta_2} \\
     &= (\Phi_{\tau^{-2\delta_2}M}^{2\delta_1}) \circ  (\Phi_{M}^{2\delta_2}) = \Phi_{M}^{2(\delta_1 +\delta_2).}
\end{align*}
    A symmetric argument shows that $\tau^{-(\delta_1+\delta_2)}f \circ g  = \Phi_{P}^{2(\delta_1 +\delta_2)}$ and so $M,P$ are $\delta_1+\delta_2$-interleaved. It follows that if $d_I(M,N) = \delta_1$ and $d_I(N,P) = \delta_2$, then
    \[ d_I(M,P) \leq \delta_1 + \delta_2 = d_I(M,N) + d_I(N,P). \]
\end{proof}

\section{Geometric Models}\label{GeometricM}
The algebras ${\mathbf{k}Q}$ of type $A$ or type $\tilde{A}$ are examples of gentle algebras. This is a setting where there is a very good understanding of the bounded derived category $D^b(\mod{\mathbf{k}Q})$ in terms of combinatorial objects called homotopy strings and bands  \cite{ALP, BM, CPS}.  More recently, work by \cite{OPS, HKK, LP} associates to such an algebra a ``geometric model''  which can be used to describe indecomposable objects in $D^b(\mod{\mathbf{k}Q})$ in terms of (graded) arcs and certain closed curves on a surface. Morphisms can also be described in terms of (graded) intersections of these arcs and curves. See also \cite{K} for a related geometric model for $\mod{\mathbf{k}Q}$. We refer the reader to these references for the general case, and restrict ourselves to describing the geometric models in the cases of the path algebras of type $A$ or type $\tilde{A}$.

The data of a geometric model consists of an oriented surface $\Sigma$ with boundary $\delta(\Sigma)$, a finite set $M_A$ of marked points on the boundary, together with a lamination $L_A$. The lamination $L_A$ is a finite collection of non-selfintersecting and pairwise nonintersecting curves on $\Sigma$, considered up to isotopy relative to $M_A$.  If the algebra is of type $A$ or type $\tilde{A}$, then there is a geometric model for  $D^b(\mod{\mathbf{k}Q})$, such that the curves in the lamination satisfy the following properties:
\begin{enumerate} 
\item \label{endbdry} Both endpoints of each curve are in $\delta(\Sigma) \setminus M_A$.
\item No curve in $L_A$ is isotopic to a part of the boundary  $\delta(\Sigma)$ containing no marked points.
\item \label{polygons} The set of curves in $L_A$ divides the disc into polygons, each of which has precisely one marked point in its boundary.
\item \label{hereditary} Any section of the boundary $\delta(\Sigma)$ between two marked points is the end point of at most two curves in $L_A$.
\end{enumerate} 

\begin{remark}
It is shown in \cite[Section 1.1]{OPS} that, given a gentle algebra, $\Sigma$ can be constructed by gluing together polygons (each of which has a single marked point in its boundary). The gluing is controlled by a marked ribbon graph associated to the algebra, and the glued edges become the curves in a lamination. Points (\ref{endbdry})-(\ref{polygons}) above are therefore satisfied by any model constructed in this way. On the other hand, point (\ref{hereditary}) does not hold for the geometric model of an arbitrary gentle algebra, and is a special property of those of types  $A$ and $\tilde{A}$.
\end{remark}

\noindent The quiver $Q$ can be recovered from the geometric model. This comes directly from \cite[Section~1.5]{OPS}, noting that in the hereditary cases that we are considering, there is no ideal of relations: 
\begin{enumerate} 
\item There is a vertex of $Q$ corresponding to each curve in the lamination $L_A$.
\item Suppose two curves in $L_A$ corresponding to vertices $i$ and $j$ both have an endpoint on the same boundary component of $\Sigma$. If the endpoint of $j$ follows that of  $i$ on the boundary in the clockwise order and there are no marked points or other endpoints of curves in $L_A$ between them, then there is an arrow from $i$ to $j$.
\end{enumerate}

\begin{example}\label{AnExample}
    \begin{enumerate} 
\item \emph{Path algebras of type} $A_n$ - the surface $\Sigma$  is a disc $D$, with one boundary circle $\delta(D)$ and $M_A$ consists of $n+1$ marked points on this boundary. In the equioriented case, if we label the vertices $1, \dots, n+1$ clockwise around the boundary, there is a curve in the lamination between the boundary intervals on either side of the marked points $1, \dots, n$. Two examples are shown in Figure~\ref{A4Examples}. 
\item \emph{Path algebras of type} $\tilde A_{p,q}$ - the surface $\Sigma$ is an annulus, which has two boundary circles. $M_A$ consists of $p$ marked points on one boundary and $q$ marked points on the other.
\end{enumerate} 

\end{example}

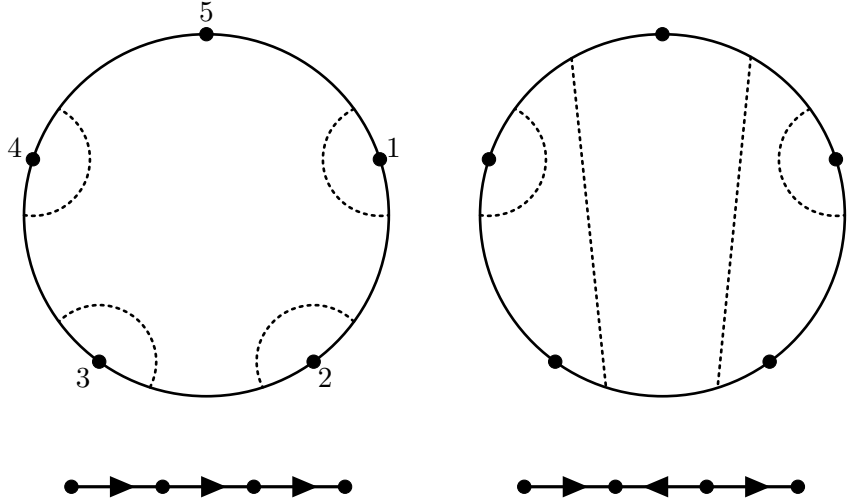
\begin{figure}
\centering
\begin{tikzpicture}[line cap=round,line join=round,>=triangle 45,x=0.6cm,y=0.6cm]
\draw [line width=1pt] (4,4) circle (0.6*4cm);
\draw [line width=1pt] (14,4) circle (0.6*4cm);
\draw [shift={(0.19577393481938543,5.23606797749979)},line width=1pt,dotted]  plot[domain=-1.7278759594743862:1.0954625862214311,variable=\t]({1*1.251475720321847*cos(\t r)+0*1.251475720321847*sin(\t r)},{0*1.251475720321847*cos(\t r)+1*1.251475720321847*sin(\t r)});
\draw [shift={(1.6488589908301075,0.7639320225002102)},line width=1pt,dotted]  plot[domain=-0.47123889803846897:2.350697128230278,variable=\t]({1*1.251475720321847*cos(\t r)+0*1.251475720321847*sin(\t r)},{0*1.251475720321847*cos(\t r)+1*1.251475720321847*sin(\t r)});
\draw [shift={(6.3511410091698925,0.7639320225002102)},line width=1pt,dotted]  plot[domain=0.7853981633974483:3.6126272814771503,variable=\t]({1*1.2514757203218472*cos(\t r)+0*1.2514757203218472*sin(\t r)},{0*1.2514757203218472*cos(\t r)+1*1.2514757203218472*sin(\t r)});
\draw [shift={(7.804226065180615,5.23606797749979)},line width=1pt,dotted]  plot[domain=2.0420352248333655:4.869468613064179,variable=\t]({1*1.2514757203218472*cos(\t r)+0*1.2514757203218472*sin(\t r)},{0*1.2514757203218472*cos(\t r)+1*1.2514757203218472*sin(\t r)});
\draw [line width=1pt,dotted] (12.006864496586166,7.46805577594007)-- (12.762508975527227,0.19623660510418617);
\draw [line width=1pt,dotted] (15.937533763574857,7.4994232260484655)-- (15.209429732284093,0.18722152195184227);
\draw [shift={(10.195773934819385,5.23606797749979)},line width=1pt,dotted]  plot[domain=-1.7278759594743862:1.0972080844543592,variable=\t]({1*1.251475720321846*cos(\t r)+0*1.251475720321846*sin(\t r)},{0*1.251475720321846*cos(\t r)+1*1.251475720321846*sin(\t r)});
\draw [shift={(17.804226065180615,5.23606797749979)},line width=1pt,dotted]  plot[domain=2.0420352248333664:4.869468613064179,variable=\t]({1*1.251475720321846*cos(\t r)+0*1.251475720321846*sin(\t r)},{0*1.251475720321846*cos(\t r)+1*1.251475720321846*sin(\t r)});

\begin{scope}[very thick,decoration={
    markings,
    mark=at position 0.7 with {\arrow{>}}}
    ] 
    \draw[postaction={decorate}] (5.04,-2) -- (7.04,-2);
    \draw[postaction={decorate}]  (3.04,-2) -- (5.04,-2);
    \draw[postaction={decorate}] (1.04,-2) -- (3.04,-2);
    \draw[postaction={decorate}]  (10.97,-2) -- (12.97,-2);
    \draw[postaction={decorate}] (14.97,-2) -- (12.97,-2);
    \draw[postaction={decorate}]   (14.97,-2) -- (16.97,-2);
\end{scope}

\draw [fill=black] (4,8.5) node {$ 5$};
\draw [fill=black] (8.1,5.5) node {$ 1$};
\draw [fill=black] (6.6,0.4) node {$ 2$};
\draw [fill=black] (1.3,0.4) node {$ 3$};
\draw [fill=black] (-0.2,5.5) node {$ 4$};
\draw [fill=black] (4,8) circle (2.5pt);
\draw [fill=black] (7.804226065180615,5.23606797749979) circle (2.5pt);
\draw [fill=black] (6.3511410091698925,0.7639320225002102) circle (2.5pt);
\draw [fill=black] (1.6488589908301075,0.7639320225002102) circle (2.5pt);
\draw [fill=black] (0.19577393481938543,5.23606797749979) circle (2.5pt);
\draw [fill=black] (14,8) circle (2.5pt);
\draw [fill=black] (10.195773934819385,5.23606797749979) circle (2.5pt);
\draw [fill=black] (11.648858990830107,0.7639320225002102) circle (2.5pt);
\draw [fill=black] (16.351141009169893,0.7639320225002093) circle (2.5pt);
\draw [fill=black] (17.804226065180615,5.23606797749979) circle (2.5pt);
\draw [fill=black] (3.04,-2) circle (2.5pt);
\draw [fill=black] (5.04,-2) circle (2.5pt);
\draw [fill=black] (7.04,-2) circle (2.5pt);
\draw [fill=black] (1.04,-2) circle (2.5pt);
\draw [fill=black] (10.97,-2) circle (2.5pt);
\draw [fill=black] (12.97,-2) circle (2.5pt);
\draw [fill=black] (14.97,-2) circle (2.5pt);
\draw [fill=black] (16.97,-2) circle (2.5pt);
\end{tikzpicture}
\caption{Examples of the geometric models for $D^b(\mod{\mathbf{k}Q})$, where $Q$ is a quiver of type $A_4$. The corresponding quiver $Q$ is drawn below each model. The dotted lines indicate the curves in the lamination.} \label{A4Examples}
\end{figure}

An arc on the geometric model is a smooth curve $\gamma:[0,1] \to \Sigma$ such that $\gamma(0),\gamma(1) \in M_A$. We consider arcs up to end-point fixing homotopy. A grading of a curve $\gamma$ is a sequence of integers, indexed by the intersection points of the curve $\gamma$ and curves in the lamination $L_A$. This sequence must satisfy certain conditions given in \cite[Definition~2.3]{OPS}. An arc is always gradable, but in general, the existence of a grading imposes an extra condition on a closed curve. In the $\tilde{A}_n$-case however, the non-trivial closed curves are all gradable. We call a closed curve primitive if it is not homotopy equivalent to $\gamma^n$ for some other closed curve $\gamma$. 

It is shown in \cite{OPS} that there is a bijective correspondence between the set of indecomposable objects in the bounded derived category of the gentle algebra, and the set
\[ \operatorname{Ind} = \operatorname{Arc} \cup (\operatorname{Band} \times \mathbf{k}^* \times \IZ_{>0})\]
where $\operatorname{Arc}$ is the set of non-trivial graded arcs up to end point fixing homotopy, and $\operatorname{Band}$ is the set of grades closed primitive curves up to homotopy.

Given any graded arc, a projective resolution of the corresponding object can be obtained by looking at the sequence of curves in the lamination it crosses. For example, the projective modules, considered as complexes concentrated in degree zero, correspond to the graded arcs that intersect a single curve in the lamination in degree zero. 
\begin{example} \label{ExampleBarcode}
\begin{enumerate} 
\item \emph{Type} $A_n$. Consider again the equioriented $A_n$ case (see Example~\ref{AnExample}). We recall from Theorem~\ref{Gabriel} and the hereditary property, that all indecomposable summands are, up to shift, interval representations. For each $a \in \{1, \dots, n\}$, the indecomposable projective module $P_a$ corresponds to the interval $[a,n+1)$. The indecomposable object $V_{[a,b)}$ corresponding to the interval $[a,b)$ has projective resolution $0 \to P_b \to P_a \to V_{[a,b)}$. A direct calculation then shows that $V_{[a,b)}$ corresponds to the arc between vertices $a$ and $b$. It is graded such that it crosses the curve in the lamination corresponding to $P_a$ in degree zero and the curve in the lamination corresponding to $P_b$ in degree $-1$.
\item \emph{Type} $\tilde A_{p,q}$ - the annulus $\Sigma$ has two boundary circles and so the arcs on $\Sigma$ split into three types. The arcs between the two boundaries correspond to indecomposable objects in the non-regular components, with the grading determining whether they are (shifts of) objects in the preprojective or preinjective component. The arcs with end-points both lying on the boundary circle with $p$ (respectively $q$) marked points, correspond to the indecomposable objects in a rank $p$ (respectively $q$) tube. Objects in the homogeneous tubes correspond to primitive graded closed curves. These tubes come in families indexed by $\mathbf{k}^*$, and a positive integer indexes the length of the object in the tube.  
\end{enumerate} 
    
\end{example}

\subsubsection{Geometric models and the AR-quiver}
We now consider how the AR-quiver relates to the geometric model. Let $(\Sigma, M_A)$ be a geometric model corresponding to $D^b(\mod{\mathbf{k}Q})$ for some gentle quiver $Q$. There is an orientation of each boundary component of $\Sigma$ induced by the orientation of the surface. 
Given a marked point $u$ on the boundary, define $u^+$ to be the next marked point in the positive direction along the boundary, and let $\sigma_u$ be an arc from $u$ to $u^+$ that is homotopic to the part of the boundary $\delta(\Sigma)$ going from $u$ to $u^+$ in a positive direction.

\begin{definition}\label{endpointsdefn} Consider a non-trivial arc $\gamma$ such that $\gamma(0)=u$ and $\gamma(1)= v$. Then:
\begin{itemize} 
    \item  $s(\gamma)$ is the arc that is homotopic to the concatenation $ \gamma \cdot \sigma_u^{-1}$. In other words, $s(\gamma)$ is obtained from $\gamma$ by moving the start of $\gamma$ along the boundary to the next marked point in the positive direction. 
   \item  $t(\gamma) := (s(\gamma^{-1}))^{-1} \simeq \sigma_v \cdot \gamma $, so $t(\gamma)$ is obtained from $\gamma$ by moving the end of $\gamma$ along the boundary to the next marked point in the positive direction. 
    \end{itemize}
\end{definition}

\begin{remark} \label{remarkends} \phantom{drfghj}
\begin{enumerate}
\item If $s(\gamma)$ (respectively $t(\gamma)$) is a trivial arc, we treat this as a well-defined object that corresponds to the zero object in the derived category.
\item For simplicity, we adopt the convention that $s,t$ are defined and fix a trivial arc.  
\item If $s(\gamma)$ and $t(\gamma)$ are both non-trivial, then $ts(\gamma)=st(\gamma)$.
\item  If $\underline{\gamma} =(\gamma, f)$ is a graded arc then there are induced gradings on $s(\gamma)$ and $t(\gamma)$ (see \cite[Section~5]{OPS}) and we denote corresponding graded arcs by $s(\underline{\gamma})$ and $t(\underline{\gamma})$ respectively.
\item These operations are analogous to moving the end points of the intervals in a classic barcode.
    \item \cite[Theorem~5.1]{OPS} shows that these operations can be used to see the squares of the Auslander-Reiten mesh, which are all of the form
\begin{center}
\definecolor{ududff}{rgb}{0,0,0}
\begin{tikzpicture}[line cap=round,line join=round,>=triangle 45,x=1cm,y=1cm]
\draw [->,line width=1pt] (2.25,2.25) -- (2.75,2.75);
\draw [->,line width=1pt] (3.25,2.75) -- (3.75,2.25);
\draw [->,line width=1pt] (2.25,1.75) -- (2.75,1.25);
\draw [->,line width=1pt] (3.25,1.25) -- (3.75,1.75);
\draw [fill=ududff] (2,2) node {$\underline{\gamma}$};
\draw [fill=ududff] (3,3) node {$s(\underline{\gamma})$};
\draw [fill=ududff] (3,1) node {$t(\underline{\gamma})$};
\draw [fill=ududff] (4,2) node {$st(\underline{\gamma})$};

\end{tikzpicture}
    
\end{center}
\item In particular, if $M$ is an indecomposable object corresponding to a graded arc $\underline{\gamma}=(\gamma, f)$ then $\tau^{-1}M$ corresponds to the graded arc $st(\underline{\gamma})$.
\end{enumerate}
\end{remark}   

We can also define operations $s$ and $t$ on objects corresponding to closed curves. These could also be viewed geometrically as operations that increase (respectively decrease) the winding number of the closed curve. However, since we have taken the convention that the closed curves are primitive but have additional data of a positive integer (indexing the length of the corresponding indecomposable object), we take a more algebraic approach. 
\begin{definition}\label{stclosed} Let $ (\underline{\gamma},  \lambda, l) \in \textbf{Band} \times \mathbf{k}^* \times \IZ_{\geq 0}$, where $\underline{\gamma}$ is a graded primitive closed curve. Then we define operations $s,t: \textbf{Band} \times \mathbf{k}^* \times \IZ_{ \geq 0} \to \textbf{Band} \times \mathbf{k}^* \times \IZ_{>0}$, by
\[ s (\underline{\gamma},  \lambda, l) = \begin{cases}
    (\underline{\gamma},  \lambda, l+1) & \text{if } l\geq 1\\
    (\underline{\gamma},  \lambda, 0)  & \text{if } l=0
\end{cases} \qquad \text{and } \qquad  t(\underline{\gamma},  \lambda, l) = \begin{cases}
    (\underline{\gamma},  \lambda, l-1) & \text{if } l\geq 1\\
    (\underline{\gamma},  \lambda, 0)  & \text{if } l=0
\end{cases} \]
\end{definition}
\noindent The comments from Remark~\ref{remarkends} still hold in this case, where an object $ (\underline{\gamma},  \lambda, l)$ is said to be trivial if $l=0$.

 \section{Bottleneck distance}\label{Bottleneck}
In this section we shall look at a second notion of distance on persistence modules. Once again, we shall recall the definition in the equioriented $A_n$ case and then rewrite this in a way that can be generalised.

\begin{definition}
Given a persistence module $M$, the barcode $\mathcal{B}(M)$ is defined to be a (representation of) the multi-set $(\mathcal{A}_M, m)$ where $\mathcal{A}_M$ is the set of graded arcs or closed curves corresponding to indecomposable summands of $M$ and $m \colon \mathcal{A}_M \to \IZ_{>0}$ is a function encoding their multiplicity.
\end{definition}

We know from Theorem~\ref{Gabriel} that in the equioriented $A_n$ case, all indecomposable objects are interval representations, up to shifting in the derived category. From the previous section, we saw that the interval $[a,b)$ corresponds to the arc between vertices $a$ and $b$ on the disc model. We consider the usual definition of a $\delta$-matching between modules.
\begin{definition}[Equioriented $A_n$-case]
Let $\delta$ be a non-negative integer. For a barcode $\mathcal{B}$, let $\mathcal{B}_\delta$ denote the subset of $\mathcal{B}$ corresponding to intervals $[a,b)$ such that $b-a > \delta$. A $\delta$-matching between barcodes $\mathcal{B}$ and $\mathcal{B}'$ is a matching $\eta : \mathcal{B} \not\to \mathcal{B}'$ such that
    \[ \mathcal{B}_{2\delta} \subseteq \operatorname{Coim}\eta, \qquad \mathcal{B}'_{2\delta} \subseteq \operatorname{Im}\eta \]
    and if $\eta$ maps an interval $[a,b)$ to $[a',b')$, then 
    \[ [a,b) \subset [a'-\delta, b'+\delta) \;\; \text{  and  } \;\; [a',b') \subset [a-\delta, b+\delta).\]
    We say that $\mathcal{B}$ and $\mathcal{B}'$ are $\delta$-matched if there is an $\delta$-matching between $\mathcal{B}$ and $\mathcal{B}'$.
\end{definition}

Since we would like to consider cases in which indecomposable representations do not correspond to intervals, we rephrase this in terms of the geometric model. We start by considering the operations on the end points of arcs from Definition~\ref{endpointsdefn} and the corresponding operations on intervals. 


\begin{lemma}
\label{equiorMatching}
    Suppose $1 \leq a < b \leq n+1$. The operations $s,t$ on arcs induce two operations on intervals as follows:
\[ \tilde{s}[a,b) =  [a-1, b) , \qquad  \tilde{t}[a,b) = [a, b-1).
    \]
where the empty set corresponds to the zero object in the category and $\tilde{s}$ is not defined if $a=1$.

\end{lemma}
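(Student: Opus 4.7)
The plan is to use the explicit bijection from Example~\ref{ExampleBarcode} between interval representations and graded arcs on the disc: $V_{[a,b)}$ corresponds to a graded arc $\underline{\gamma_{ab}}$ whose underlying arc $\gamma_{ab}$ runs between the marked points labelled $a$ and $b$ and crosses precisely the lamination curves for the projectives $P_a$ and $P_b$. I would fix the orientation convention $\gamma_{ab}(0) = a$, $\gamma_{ab}(1) = b$, so that $s$ (which moves the start) acts on the endpoint labelled $a$ and $t$ (which moves the end) acts on the endpoint labelled $b$.

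The core of the argument is to pin down the direction in which the positive boundary orientation shifts the labels. For this I would invoke Remark~\ref{remarkends}(7), which identifies $st(\underline{\gamma_{ab}})$ with the graded arc corresponding to $\tau^{-1} V_{[a,b)}$. Lemma~\ref{deltaistau} has already established that for $a>1$ one has $\tau^{-1} V_{[a,b)} = V_{[a-1,b-1)}$. Hence the combined operation $st$ sends the interval $[a,b)$ to $[a-1,b-1)$; since $s$ only changes the start and $t$ only the end, this forces $\tilde{s}$ to decrement $a$ and $\tilde{t}$ to decrement $b$, giving the two stated formulas.

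It remains to treat the boundary cases. When $b = a+1$, the arc $t(\underline{\gamma_{a,a+1}})$ has both endpoints at the marked point $a$ and is therefore homotopic to a trivial arc; by Remark~\ref{remarkends}(1) this represents the zero object, and is recorded in interval notation by $\tilde{t}[a,a+1) = [a,a) = \varnothing$. The main obstacle is the case $a=1$: geometrically $s(\gamma_{1,b})$ is still a well-defined arc, namely the one obtained by sliding the start from vertex $1$ to the next marked point in the positive direction, which is vertex $n+1$. However, the corresponding object in $D^b(\rep_{\mathbf{k}}(A_n))$ is no longer in the standard heart. Indeed, $V_{[1,b)}$ is the injective $I_{b-1}$ and $\tau^{-1}I_{b-1} = P_{b-1}[1]$, so the AR mesh forces $s(\gamma_{1,b})$ to represent a shifted (hence non-interval) object. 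Because there is no admissible interval of the form $[0,b)$, we declare $\tilde{s}$ undefined when $a=1$.
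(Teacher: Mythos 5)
Your proof is correct and takes essentially the same approach as the paper, whose own proof is the single line ``This follows from Remark~\ref{remarkends}.'' Your write-up supplies the missing detail: Remark~\ref{remarkends}(7) combined with Lemma~\ref{deltaistau} pins down the combined action of $st$, and separating it into the individual endpoint shifts (with Remark~\ref{remarkends}(1) handling the degenerate cases $b=a+1$ and $a=1$) gives exactly the stated formulas.
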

\begin{proof}
    This follows from Remark~\ref{remarkends}.
    \end{proof}
 \begin{remark}
     If $(\gamma,f)$ is a graded arc corresponding to an interval $[1,b)$ for some $b$ then $s(\gamma,f)$ is a well-defined object in the derived category, but it is no longer a module concentrated in degree zero, and so doesn't correspond to an interval. 
 \end{remark}

\begin{lemma}
  Let $\delta$ be a non-negative integer and suppose $[a,b), {[a',b')} \subset [1,n+1)$ are nonempty intervals.
  \begin{enumerate}
      \item $b-a \leq \delta$ if and only if $ \tilde{t}^{k}  [a,b) = \varnothing$ for some integer $0 \leq k \leq \delta$.
      \item $[a,b) \subset [a'-\delta, b'+\delta)$ and $[a',b') \subset [a-\delta, b+\delta)$ if and only if \[ \tilde{t}^{m_1} \tilde{s}^{n_1}[a,b) = \tilde{t}^{m_2} \tilde{s}^{n_2}[a',b') \neq \varnothing\] for some integers $0 \leq n_1, n_2, m_1, m_2 \leq \delta$.
      \end{enumerate}
\end{lemma}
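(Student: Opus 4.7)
The plan is to reduce both parts to direct computations using the explicit formulas for $\tilde{s}$ and $\tilde{t}$ given by the previous lemma. Iterating those formulas gives $\tilde{t}^{k}[a,b) = [a, b-k)$, which is the empty set precisely when $b-k \leq a$, and $\tilde{s}^{n}[a,b) = [a-n, b)$, which is defined so long as $a-n \geq 1$. Hence $\tilde{t}^{m}\tilde{s}^{n}[a,b) = [a-n, b-m)$ whenever this is a valid nonempty interval.

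For part (1), I would simply observe that $\tilde{t}^{k}[a,b) = \varnothing$ for some $0 \leq k \leq \delta$ if and only if some $k$ in that range satisfies $k \geq b-a$, which holds if and only if $b-a \leq \delta$.

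For part (2), the reverse direction is immediate: if $[a-n_1, b-m_1) = [a'-n_2, b'-m_2)$ and is nonempty, then $a-a' = n_1 - n_2$ and $b-b' = m_1 - m_2$, and since $n_i, m_i \in [0,\delta]$, both differences lie in $[-\delta, \delta]$, which rearranges exactly to the two containment conditions. For the forward direction, assuming $|a-a'| \leq \delta$ and $|b-b'| \leq \delta$, I would pick the $n_i, m_i$ explicitly so as to land on the common interval $[\min(a,a'), \min(b,b'))$: set $n_1 = a - \min(a,a')$, $n_2 = a' - \min(a,a')$, $m_1 = b - \min(b,b')$, $m_2 = b' - \min(b,b')$. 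All four lie in $[0,\delta]$ by the hypotheses, the $\tilde{s}$ operations are defined because $\min(a,a') \geq 1$, and nonemptiness $\min(a,a') < \min(b,b')$ follows from $a<b$ and $a'<b'$ by a short case analysis on the orderings of $a$ vs $a'$ and $b$ vs $b'$.

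The only step requiring any care is verifying nonemptiness of the common interval $[\min(a,a'), \min(b,b'))$ in the forward direction of (2); this is the mildest obstacle and is handled by the case check just described. Everything else is bookkeeping with the closed-form expressions for $\tilde{s}^n \tilde{t}^m$.
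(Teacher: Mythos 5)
Your proof is correct and follows essentially the same route as the paper's: the closed-form $\tilde{t}^{m}\tilde{s}^{n}[a,b)=[a-n,b-m)$, the observation that emptiness of $\tilde{t}^k[a,b)$ is equivalent to $k\geq b-a$, and the explicit choices $n_i=\max(0,\pm(a-a'))$, $m_i=\max(0,\pm(b-b'))$ landing on $[\min(a,a'),\min(b,b'))$ all match the paper's argument. The only cosmetic difference is that you state the composite formula up front and spell out the nonemptiness check, which the paper merely asserts.
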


\begin{proof}
For the forwards direction of $(1)$, let $k = b-a$. If $k =b-a \leq \delta $ then by definition \[ \tilde{t}^{k} [a,b) = \tilde{t}^{b-a} [a,b) = [a,b-(b-a)) = \varnothing.\] Conversely if there exists $0 \leq k \leq \delta$ such that $\tilde{t}^{k} [a,b)= [a, b-k)$ is well defined and empty, then $ b-a = k \leq \delta$.

For the forwards direction of $(2)$, we consider $n_1 = \operatorname{max}(0, a-a')$ and $n_2 = \operatorname{max}(0, a'-a)$. From the interval assumptions we have $a'-\delta \leq a$ and $a-\delta \leq a'$ so $0 \leq n_1,n_2 \leq \delta$. Then clearly $\tilde{s}^{n_1}[a,b) = [\operatorname{min}(a,a'),b)$ and $\tilde{s}^{n_2}[a',b') =  [\operatorname{min}(a,a'),b')$ noting that $\operatorname{min}(a,a')>0$ so these are well defined intervals in $[1,n+1)$. Similarly, if we take $m_1 = \operatorname{max}(0, b-b')$ and $m_2 = \operatorname{max}(0, b'-b)$, then $0 \leq m_1,m_2 \leq \delta$, and $\tilde{t}^{m_1} \tilde{s}^{n_1}[a,b] = [\operatorname{min}(a,a'),\operatorname{min}(b,b')] = \tilde{t}^{m_2} \tilde{s}^{n_1}[a',b'] $ observing that $\operatorname{min}(a,a') < \operatorname{min}(b,b')$ so this is again a non-empty interval. Conversely, if $\tilde{t}^{m_1} \tilde{s}^{n_1}[a,b) = \tilde{t}^{m_2} \tilde{s}^{n_2}[a',b') \neq \varnothing$, then it follows from the definitions that $a-n_1 = a' -n_2$. Therefore, $a = a' - (n_2-n_1) \geq a' - \delta $ and $a' = a - (n_1-n_2) \geq a - \delta $. The inequalities for $b$ and $b'$ follow similarly.
\end{proof}

We can consider these properties in a more general setting.
\begin{definition}
    Suppose $\underline{\gamma}= (\gamma,f)$ and $\underline{\gamma'}=  (\gamma',f')$ are graded arcs and let $\delta \in \IZ_{\geq 0}$. 
    \begin{itemize}
        \item $\underline{\gamma}$ is called $\delta$-short if there exists $0 \leq k \leq \delta$ such that either $ {t}^{k}\underline{\gamma}$ or $ {s}^{k}\underline{\gamma}$ is trivial. 
        \item We define the length of $\underline{\gamma}$ to be $\ell(\underline{\gamma}) = \inf\{\delta \in \IZ_{\geq 0} \mid \underline{\gamma} \text{ is $\delta$-short}\}$.
        \item $\underline{\gamma}$ and $ \underline{\gamma}'$ are called $\delta$-equivalent if there exist integers $0 \leq n_1, n_2, m_1, m_2 \leq \delta$ such that \[ {t}^{m_1} {s}^{m_2}\underline{\gamma} = {t}^{n_1} {s}^{n_2}\underline{\gamma}' \]
        is non-trivial.
    \end{itemize}
\end{definition}

\begin{lemma} \label{stmovements}
    Suppose $\Sigma$ is a geometric model of type $\tilde{A}$ and let $\underline{\gamma}= (\gamma,f)$ be a graded arc or closed curve of finite length $\ell(\underline{\gamma})$. After a suitable choice of orientation of $\gamma$, for any $k_1, k_2 \in \IZ_{\geq 0}$, such that $k_1-k_2 < \ell(\underline{\gamma})$, then
    \[\ell({t}^{k_1} {s}^{k_2} \underline{\gamma}) = \ell(\underline{\gamma})-k_1+k_2\]
    If $k_1-k_2 \geq \ell(\underline{\gamma})$ then ${t}^{k_1} {s}^{k_2} \underline{\gamma}$ is trivial.
\end{lemma}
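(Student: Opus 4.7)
The plan is to split the argument into two cases, depending on whether $\underline{\gamma}$ is a graded closed primitive curve or a graded arc.

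\textbf{Closed curves.} This case is essentially formal. If $\underline{\gamma}$ corresponds to a triple $(\underline{\gamma}_0, \lambda, l)$ with $l \geq 1$, then by Definition~\ref{stclosed} the actions of $s$ and $t$ on the integer $l$ are $l \mapsto l+1$ and $l \mapsto l-1$ respectively, with $l = 0$ the absorbing trivial state. Hence $\ell(\underline{\gamma}) = l$ (realised by the iterate $t^l$), and a direct calculation shows that $t^{k_1}s^{k_2}(\underline{\gamma}_0, \lambda, l) = (\underline{\gamma}_0, \lambda, l + k_2 - k_1)$ whenever that quantity is positive, and equals the trivial object otherwise. The orientation hypothesis is vacuous in this case.

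\textbf{Arcs.} For a graded arc $\underline{\gamma}$ on the annulus $\Sigma$, I propose to interpret the effect of $s$ and $t$ on length geometrically. The key claim to establish is: moving one endpoint of $\gamma$ one marked point along its boundary changes $\ell$ by exactly $\pm 1$, with the sign determined by the local configuration of the lamination near that endpoint. Property (4) of the lamination from Section~\ref{GeometricM} — at most two lamination curves end on each boundary segment between consecutive marked points — is what makes this effect purely local and unambiguously $\pm 1$. Once this claim is in hand, I fix the orientation of $\gamma$ so that $t$ is the length-decreasing operation (equivalently, so that the endpoint moved by $t$ slides past lamination crossings in the direction that removes intersections). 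With this convention, the formula $\ell(t^{k_1}s^{k_2}\underline{\gamma}) = \ell(\underline{\gamma}) - k_1 + k_2$ for $k_1 - k_2 < \ell(\underline{\gamma})$ follows by induction on $k_1 + k_2$, using the commutativity $ts = st$ on non-trivial arcs from Remark~\ref{remarkends}(3). When $k_1 - k_2 = \ell(\underline{\gamma})$, one further application of $t$ trivialises the arc, and subsequent $s$ or $t$ moves leave it trivial by Remark~\ref{remarkends}(2), which gives the second assertion.

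\textbf{Main obstacle.} The delicate step is verifying the $\pm 1$ behaviour of $\ell$ under a single $s$- or $t$-move and, more crucially, showing that \emph{one} fixed orientation of $\gamma$ makes $t$ length-decreasing throughout the iteration (rather than the sign of its effect flipping as the endpoint is slid past successive parts of the lamination). I expect the cleanest route is to pass to the universal cover of the annulus: $\gamma$ lifts to a segment whose endpoints lie on two parallel lines carrying periodic families of lifted marked points, and the $s$- and $t$-actions translate the lifted endpoints by one position along their respective lines. In this picture $\ell(\underline{\gamma})$ becomes a combinatorial distance between the lifted endpoints, and choosing the orientation of $\gamma$ amounts to choosing which endpoint to translate towards the other — a choice that makes the length behaviour monotone and yields the stated formula directly.
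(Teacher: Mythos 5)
Your final paragraph (the universal-cover route) is essentially the paper's argument, and the closed-curve case is dispatched identically via Definition~\ref{stclosed}. However, the picture you sketch needs one correction that is not cosmetic: since $\ell(\underline\gamma)$ is assumed finite, both endpoints of $\gamma$ must lie on the \emph{same} boundary circle of the annulus --- an arc joining the two boundary components is never trivialised by iterating $s$ or $t$, so it has infinite length and falls outside the lemma. Thus both lifted endpoints live on a single line $\IR\times\{0\}$ with marked points at $\IZ\times\{0\}$, not on ``two parallel lines'' with each endpoint moving ``along their respective lines.'' With that fix the computation is immediate: $s$ translates $\tilde\gamma(0)$ by $+1$, $t$ translates $\tilde\gamma(1)$ by $+1$, and an arc is trivial iff $\tilde\gamma(0)=\tilde\gamma(1)$. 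Choosing the orientation so that $\tilde\gamma(1)=\tilde\gamma(0)-\ell(\underline\gamma)$, the endpoints of $t^{k_1}s^{k_2}\tilde{\underline\gamma}$ are $(\tilde\gamma(0)+k_2,\ \tilde\gamma(0)-\ell(\underline\gamma)+k_1)$; further applications of $s$ never close the gap, while $t^m$ closes it exactly when $m=\ell(\underline\gamma)-k_1+k_2$, which is the claimed formula.

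Your first sketch for the arc case is a detour you are right to distrust. The length $\ell(\underline\gamma)$ here is defined combinatorially --- the minimal $k$ for which $s^k$ or $t^k$ trivialises the arc --- and is not the lamination-crossing count, so property~(4) of the lamination is not the correct lever. More seriously, the induction you propose on $k_1+k_2$ invokes $ts=st$, but the commutativity statement in the paper (Lemma~\ref{partcommutivity}) is proved \emph{after and using} Lemma~\ref{stmovements}, so appealing to it here risks circularity. The universal-cover calculation you arrive at avoids both problems, and, once stated with both endpoints on a single boundary line, coincides with the paper's proof.
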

\begin{proof}
    The graded closed curve case follows straight from Definition\ref{stclosed}. Now suppose an arc $\gamma$ has finite length, so both endpoints must lie on the same boundary component $B$. We consider the universal cover of the annulus $\Sigma$ which is homeomorphic to an infinite strip $\IR \times [0,1]$. Without loss of generality we may assume that $B$ lifts to $\tilde{B} \simeq\IR \times \{0\}$ with the natural orientation, and that the marked points on $B$ lift to the integral points $\IZ \times \{0\}$. Note that any arc $\tilde{\alpha}$ on the universal cover is completely determined by its end points $(\tilde{\alpha}(0),\tilde{\alpha}(1))$. An arc $\alpha$ on $\Sigma$ is trivial if and only if $\tilde{\alpha}(0)=\tilde{\alpha}(1)$ for any lift $\tilde{\alpha}$. The definitions of the operations $s$ and $t$ also lift naturally to the universal cover where for any non-trivial arc $\tilde{\alpha}$ then $\tilde{s}(\tilde{\alpha})$ corresponds to the pair $(\tilde{\alpha}(0)+1,\tilde{\alpha}(1))$ and $\tilde{t}(\tilde{\alpha})$ corresponds to the pair $(\tilde{\alpha}(0),\tilde{\alpha}(1)+1)$. 
    
     Suppose $\ell(\underline{\gamma})=l$, so $l$ is minimal such that either $ {t}^{l}  \underline{\gamma}$ or ${s}^{l} \underline{\gamma}$ is trivial. Up to a choice of orientation, we may assume that $ {t}^{l}  \underline{\gamma}$ is trivial. Then $ \tilde{\gamma}(1)=\tilde{\gamma}(0)-l$. Now consider ${t}^{k_1} {s}^{k_2} \tilde{\underline{\gamma}}$, where $k_1-k_2 < l$. By construction ${t}^{k_1} {s}^{k_2} \tilde{\underline{\gamma}}(0) = \tilde{\underline{\gamma}}(0) +k_2$ and ${t}^{k_1} {s}^{k_2} \tilde{\underline{\gamma}}(1) = \tilde{\underline{\gamma}}(1) +k_1 = \tilde{\underline{\gamma}}(0) -l +k_1$. Then \[{s}^{m}({t}^{k_1} {s}^{k_2} \tilde{\underline{\gamma}})(1) = \tilde{\underline{\gamma}}(0) -l +k_1 <  \tilde{\underline{\gamma}}(0)+k_2 +m = {s}^{m}({t}^{k_1} {s}^{k_2} \tilde{\underline{\gamma}})(0)\] so ${s}^{m}({t}^{k_1} {s}^{k_2} {\underline{\gamma}})$ is not trivial for any $m\geq 0$. However \[{t}^{m}({t}^{k_1} {s}^{k_2} \tilde{\underline{\gamma}})(1) = \tilde{\underline{\gamma}}(0) -l +k_1 +m =  \tilde{\underline{\gamma}}(0)+k_2 = {t}^{m}({t}^{k_1} {s}^{k_2} \tilde{\underline{\gamma}})(0)\] holds precisely when $m=l-k_1+k_2$.  Therefore, $\ell({t}^{k_1} {s}^{k_2} \underline{\gamma})= m = l-k_1+k_2$.

\end{proof}
\begin{remark}\label{choice}
   From now on, we will consistently orient all finite length $\underline{\gamma}$ arcs such that $t^{\ell(\underline{\gamma})} \underline{\gamma}$ is trival. Note that Lemma~\ref{stmovements} then implies that $s^k\underline{\gamma}$ is not trivial for any $k \geq 0$. 
\end{remark}

We give some sufficient conditions for the operations $s$ and $t$ to commute.

\begin{lemma} \label{partcommutivity}
    Suppose $\Sigma$ is a geometric model of type $\tilde{A}$ and let $\underline{\gamma}= (\gamma,f)$ be a graded arc. 
    If $\ell({t}^{k_1} \underline{\gamma})$ is strictly positive or infinite, then ${t}^{k_1} {s}^{k_2} \underline{\gamma}= {s}^{k_2}{t}^{k_1} \underline{\gamma}$.
\end{lemma}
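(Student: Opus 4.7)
The plan is to lift the situation to the universal cover of $\Sigma$, where the operations $s$ and $t$ act independently on the two endpoints of an arc and therefore commute on the nose. The hypothesis that $\ell(t^{k_1}\underline{\gamma})$ is strictly positive or infinite is needed only to guarantee that the arcs produced on both sides descend to non-trivial arcs on $\Sigma$, so that the commutation observed upstairs is not obscured by the collapsing of all trivial arcs to a single object.

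I would split into two cases. If $\ell(\underline{\gamma}) = \infty$, then the endpoints of $\gamma$ lie on different boundary components of the annulus, so $s$ moves the starting endpoint along one boundary while $t$ moves the ending endpoint along the other. These operations act on disjoint data, commute trivially, and no iterated image can ever be trivial, so ${t}^{k_1}{s}^{k_2}\underline{\gamma} = {s}^{k_2}{t}^{k_1}\underline{\gamma}$ is immediate from the definitions. Otherwise $\ell(\underline{\gamma}) = l < \infty$ and I would argue as in the proof of Lemma~\ref{stmovements}: pass to the universal cover, realising a neighbourhood of the relevant boundary component as a strip $\IR \times [0,1]$ with marked points at $\IZ \times \{0\}$, so that a lift $\tilde{\underline{\gamma}}$ is determined by its ordered pair of endpoints $(\tilde{\gamma}(0),\tilde{\gamma}(1))$. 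The operations lift as $\tilde{s}(a,b) = (a+1,b)$ and $\tilde{t}(a,b) = (a,b+1)$, which manifestly commute, so both $\tilde{t}^{k_1}\tilde{s}^{k_2}\tilde{\underline{\gamma}}$ and $\tilde{s}^{k_2}\tilde{t}^{k_1}\tilde{\underline{\gamma}}$ have the common endpoint pair $(\tilde{\gamma}(0)+k_2,\tilde{\gamma}(1)+k_1)$. Using the orientation convention of Remark~\ref{choice}, $\tilde{\gamma}(1) = \tilde{\gamma}(0) - l$, so the gap between the two coordinates is $l - k_1 + k_2$; the hypothesis $\ell(t^{k_1}\underline{\gamma}) = l - k_1 > 0$ forces $k_1 < l$, whence $l - k_1 + k_2 > 0$ and the lift descends to a non-trivial graded arc on $\Sigma$. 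Gradings are preserved throughout because intersections with the lamination lift and descend compatibly with $s$ and $t$.

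The main obstacle is essentially bookkeeping around the trivial-arc convention: the operations $s$ and $t$ are formally defined on every graded arc, but the elementary mesh identity $ts = st$ from Remark~\ref{remarkends}(3) only applies directly when neither side has collapsed to a trivial arc. The assumption on $\ell(t^{k_1}\underline{\gamma})$ is precisely calibrated to rule out such collapses anywhere in the commuting square of operations, at which point the universal cover argument propagates commutativity from the base case to arbitrary $k_1$ and $k_2$.
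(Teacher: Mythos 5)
Your proof is correct, but it takes a genuinely different route from the paper's. The paper proves the base case $st(\gamma) = ts(\gamma)$ directly on $\Sigma$, using the concatenation definitions $s(\gamma) = \gamma\cdot\sigma_u^{-1}$ and $t(\gamma) = \sigma_v\cdot\gamma$ together with the associativity of concatenation: both $ts(\gamma)$ and $st(\gamma)$ reduce to $\sigma_v\cdot\gamma\cdot\sigma_u^{-1}$ once one knows neither $s(\gamma)$ nor $t(\gamma)$ has collapsed to a trivial arc, and then iterates with Lemma~\ref{stmovements} keeping the lengths positive. You instead lift to the universal cover for the entire argument, so that $s$ and $t$ become the manifestly commuting coordinate shifts $\tilde{s}(a,b)=(a+1,b)$ and $\tilde{t}(a,b)=(a,b+1)$, and the only thing to check is that the commuting square of operations never passes through a trivial arc downstairs. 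Your approach buys conceptual uniformity (it reuses precisely the machinery of Lemma~\ref{stmovements} and makes the commutation ``free'') at the cost of tracking the intermediate lengths a little more explicitly; the paper's approach gets the base case immediately from associativity without re-deriving the lift. One point worth tightening in your write-up: you verify that the final arc $t^{k_1}s^{k_2}\underline{\gamma}$ is non-trivial because its endpoint gap $l - k_1 + k_2$ is positive, but since the convention is that a trivialised arc is fixed forever by $s$ and $t$, you must also confirm that every \emph{intermediate} arc in both orders of application is non-trivial. Your final paragraph asserts that the hypothesis is ``precisely calibrated to rule out such collapses anywhere in the commuting square,'' which is true (the minimum gap encountered in either order is $l - k_1 > 0$), but spelling out that bound would make the argument fully watertight.
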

\begin{proof}
  Since $s(\gamma) = \gamma \cdot \sigma_u^{-1}$ is nontrivial by Remark~\ref{choice}, then $ts(\gamma) = \sigma_v \cdot (\gamma \cdot \sigma_u^{-1})$. If $\ell(t\gamma) >0$ then $t(\gamma) = \sigma_v \cdot \gamma$ is also nontrivial, so $st(\gamma) = (\sigma_v \cdot \gamma) \cdot \sigma_u^{-1}$. The concatenation of arcs is an associative operation, so it follows that $st(\gamma)= ts(\gamma)$. Applying this iteratively, using Lemma~\ref{stmovements}, the result follows.
\end{proof}

\subsection{Bottleneck distance}
We are now in a position to define the bottleneck distance between two barcodes in cases of type $\tilde{A}$.

\begin{definition}[$\tilde A_n$-case]\label{matching}
Let $\delta$ be a non-negative integer. For a barcode $\mathcal{B}$, let $\mathcal{B}_\delta$ denote the subset of $\mathcal{B}$ consisting of objects of length $>\delta$. A \emph{$\delta$-matching} between barcodes $\mathcal{B}$ and $\mathcal{B}'$ is a matching $\eta : \mathcal{B} \not\to \mathcal{B}'$ such that
    \[ \mathcal{B}_{2\delta} \subseteq \operatorname{Coim}\eta, \qquad \mathcal{B}'_{2\delta} \subseteq \operatorname{Im}\eta \]
    and if $\eta(N) = N'$, then $N$ and $N'$ are $\delta$-equivalent. \\
    We say that $\mathcal{B}$ and $\mathcal{B}'$ are \emph{$\delta$-matched} if there is a $\delta$-matching between $\mathcal{B}$ and $\mathcal{B}'$.
\end{definition}

\begin{remark} \label{SymmetryB}
   By considering the inverse matching $\eta^{-1}: \mathcal{B}' \not\to \mathcal{B}$, we observe that $\mathcal{B}$ and $\mathcal{B}'$ are $\delta$-matched if and only if $\mathcal{B}'$ and $\mathcal{B}$ are $\delta$-matched. 
\end{remark}

\begin{definition}
    We define the bottleneck distance between two barcodes $\mathcal{B}$ and $\mathcal{B}'$ by
\[ d_B(\mathcal{B},\mathcal{B}') = \inf\{\delta \in \IZ_{\geq 0} \mid \mathcal{B}\; \text{and} \; \mathcal{B}' \; \text{are $\delta$-matched}\}. \]
Again we use the convention that $\inf \varnothing = \infty$.
    \end{definition}

\begin{lemma} \label{bottlemetric}
    This is a well-defined (possibly infinite) metric on the space of all barcodes.
\end{lemma}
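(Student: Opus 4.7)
The plan is to verify the four standard metric axioms.

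\emph{Non-negativity, definiteness, and symmetry.} The condition $d_B(\mathcal{B}, \mathcal{B}) = 0$ holds because the identity matching is a $0$-matching: each element $N$ is trivially $0$-equivalent to itself via $m_i = n_j = 0$, with non-trivial common value $N$. Conversely, if $d_B(\mathcal{B}, \mathcal{B}') = 0$ then the required $0$-matching is necessarily a bijection (since $\mathcal{B}_0 = \mathcal{B}$ and $\mathcal{B}'_0 = \mathcal{B}'$), and $0$-equivalence forces each matched pair to coincide, so $\mathcal{B} = \mathcal{B}'$. Symmetry is immediate from Remark~\ref{SymmetryB}.

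\emph{Triangle inequality.} Given a $\delta_1$-matching $\eta_1 : \mathcal{B} \not\to \mathcal{B}'$ and a $\delta_2$-matching $\eta_2 : \mathcal{B}' \not\to \mathcal{B}''$, the plan is to show that the composition $\eta := \eta_2 \circ \eta_1$ is a $(\delta_1+\delta_2)$-matching. For the coimage condition, suppose $\ell(N) > 2(\delta_1+\delta_2)$. Then $\ell(N) > 2\delta_1$ forces $N' := \eta_1(N)$ to be defined, and applying Lemma~\ref{stmovements} to the non-trivial equation $t^{m_1}s^{m_2} N = t^{n_1}s^{n_2} N'$ yields $|\ell(N) - \ell(N')| \leq 2\delta_1$, so $\ell(N') > 2\delta_2$ and $\eta_2(N')$ is defined. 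The image condition is analogous.

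The key remaining step, and the main obstacle, is to show that $N$ and $N'' := \eta(N)$ are $(\delta_1+\delta_2)$-equivalent. The plan is to reduce $\delta$-equivalence to a purely combinatorial condition on integer parameters. By Lemma~\ref{stmovements} together with the orientation convention of Remark~\ref{choice}, each finite-length arc $\gamma$ lifts to the universal cover as a pair $(a,\, a-\ell(\gamma)) \in \IZ^2$, with $s$ and $t$ acting by $+1$ on the start and end coordinates respectively. Setting $\alpha := a' - a$ and $\beta := (a'-\ell(\gamma')) - (a-\ell(\gamma))$, a case analysis on the signs of $\alpha$ and $\beta$, using the explicit choice $(m_1, m_2) := (\max(0,\beta), \max(0,\alpha))$ and $(n_1, n_2) := (\max(0,-\beta), \max(0,-\alpha))$, shows that $\delta$-equivalence is \emph{equivalent} to the parameter bounds $|\alpha|, |\beta| \leq \delta$ whenever both arcs are non-trivial; non-triviality of the common shifted object is then automatic. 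For closed curves the analogous reformulation via Definition~\ref{stclosed} uses only the single length parameter, and for infinite-length arcs non-triviality holds by convention. Once this combinatorial reformulation is in place, the scalar triangle inequality gives $|a - a''| \leq \delta_1+\delta_2$ and the analogous bound on $\beta$, establishing the desired $(\delta_1+\delta_2)$-equivalence. Lemma~\ref{partcommutivity} enters either in verifying this reformulation directly, or (in a more algebraic variant of the argument) in rearranging composed products such as $t^{p_1}s^{p_2} t^{m_1}s^{m_2} N$ into $t^{m_1+p_1}s^{m_2+p_2} N$, the commutativity hypothesis being satisfied precisely because the relevant intermediate objects are the non-trivial common values from the original equivalence equations.
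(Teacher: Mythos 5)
Your proof is correct, and the triangle-inequality step takes a genuinely different route from the paper's. The paper works directly at the level of the operations $s$ and $t$: it composes the two equivalence equations, reorders the composed products $t^{m_1'}s^{m_2'}t^{m_1}s^{m_2}$ using Lemma~\ref{partcommutivity}, and checks non-triviality of the intermediate terms in order to invoke that commutativity lemma. Your approach instead lifts everything to the universal cover, encodes a non-trivial finite-length arc as a pair of integers, and observes (via the case analysis on $\operatorname{sgn}(\alpha)$ and $\operatorname{sgn}(\beta)$, which I verified does force $m_1-m_2<\ell(\gamma)$ in all four cases, so non-triviality of the common shifted arc is indeed automatic) that $\delta$-equivalence is precisely the condition $\lvert\alpha\rvert\le\delta$ and $\lvert\beta\rvert\le\delta$. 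The triangle inequality in $\mathbb{Z}$ then does the remaining work. This reformulation is arguably more transparent and isolates the combinatorial content, but it relies on the explicit coordinate description of Lemma~\ref{stmovements}'s proof rather than just its statement, and it requires re-deriving the correspondence separately for each of the three kinds of object (finite-length arcs, infinite-length arcs, closed curves). Two small cautions about the sketch: for closed curves the one-parameter reformulation is $\lvert l-l'\rvert\le 2\delta$ rather than $\lvert l-l'\rvert\le\delta$ (the two operators $s,t$ act on the \emph{same} coordinate with opposite signs, so the bound doubles; the scalar triangle inequality then correctly yields $2(\delta_1+\delta_2)$), and the coimage/image condition should be stated for $\mathcal{B}_{2(\delta_1+\delta_2)}$ rather than $\mathcal{B}_{\delta_1+\delta_2}$ — your bound $\lvert\ell(N)-\ell(N')\rvert\le 2\delta_1$ is consistent with this but the intermediate threshold on $\ell(N')$ needs to be $>2\delta_2$ as you wrote. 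With those details pinned down the argument is sound.
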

\begin{proof}
If $\mathcal{B}$ is a barcode, then the identity map induces a bijection $\eta : \mathcal{B} \not\to \mathcal{B}$ such that 
 \[ \mathcal{B}_{0} = \mathcal{B} = \operatorname{Coim}\eta = \operatorname{Im}\eta. \]
 Any arc in $\mathcal{B}$ is $0$-equivalent to itself, so $d_B(\mathcal{B},\mathcal{B})= 0$.
    If $\mathcal{B}$ and $\mathcal{B}'$ are any two barcodes, then clearly $d_B(\mathcal{B},\mathcal{B}')\geq 0$. If $d_B(\mathcal{B},\mathcal{B}')= 0$ then there exists $\eta : \mathcal{B} \not\to \mathcal{B}'$ such that 
 \[ \mathcal{B} = \operatorname{Coim}\eta, \; \text{and} \; \mathcal{B'} = \operatorname{Im}\eta, \]
so $\eta$ induces a bijection between the elements of $\mathcal{B}$ and $\mathcal{B}'$. Furthermore, under this bijection,
if $\eta(\underline{\gamma}) = \underline{\gamma}' $, then $\underline{\gamma}$ and $\underline{\gamma}'$ are $0$-equivalent and so $\underline{\gamma} \cong \underline{\gamma}'$. Therefore $\mathcal{B}=\mathcal{B}'$. Symmetry follows from Remark~\ref{SymmetryB}.

To prove the triangle inequality, suppose $d_1= d_B(\mathcal{B},\mathcal{B}'), d_2 =d_B(\mathcal{B}',\mathcal{B}'') < \infty$. Since $\IZ$ is discrete, the infima are obtained and there exists a $d_1$-matching $\eta_1 \colon \mathcal{B} \not\to \mathcal{B}'$ and a $d_2$-matching $\eta_2 \colon \mathcal{B}' \not\to \mathcal{B}''$. Composing these, we obtain a matching $\eta_2 \circ \eta_1 \colon \mathcal{B} \not\to \mathcal{B}''$. 

Suppose $\underline{\gamma} \in \mathcal{B}_{2(d_1+d_2)}$. Then $\underline{\gamma} \in \mathcal{B}_{2 d_1} \subseteq \operatorname{Coim}\eta_1$ since $d_1+d_2 \geq d_1$. If $\underline{\gamma}'=\eta_1(\underline{\gamma})$, then $\underline{\gamma}$ and $\underline{\gamma}'$ are $d_1$-equivalent. Therefore, there exist integers $0 \leq n_1, n_2, m_1, m_2 \leq d_1$ such that  ${t}^{m_1} {s}^{m_2}\underline{\gamma} = {t}^{n_1} {s}^{n_2}\underline{\gamma}' $ is non-trivial. By Lemma~\ref{stmovements}, non-triviality implies that $m_1-m_2<\ell(\underline{\gamma})$ and $n_1-n_2<\ell(\underline{\gamma}')$.
Suppose to the contrary that $\underline{\gamma}'$ is $2d_2$-short. Again using Lemma~\ref{stmovements} 
\begin{align*}  
 \ell(\underline{\gamma}) = \ell({t}^{m_1} {s}^{m_2}\underline{\gamma} )+m_1-m_2 &= \ell({t}^{n_1} {s}^{n_2}\underline{\gamma}')+m_1-m_2\\ &= \ell(\underline{\gamma}')+m_1-m_2-n_1+n_2 \\ &\leq 2d_2 +2d_1.
\end{align*}
        However, this would contradict the assumption that $\underline{\gamma} \in \mathcal{B}_{2(d_1+d_2)}$. Therefore $\underline{\gamma}' \in \mathcal{B}_{2d_2} \subseteq \operatorname{Coim}\eta_2$. It follows that $\underline{\gamma} \in \operatorname{Coim}( \eta_2 \circ \eta_1)$ and so $\mathcal{B}_{2(d_1+d_2)} \subseteq \operatorname{Coim}( \eta_2 \circ \eta_1)$. A similar argument shows that $\mathcal{B}_{2(d_1+d_2)}'' \subseteq \operatorname{Im}( \eta_2 \circ \eta_1)$. 

Now suppose that $\eta_2 \circ \eta_1(\underline{\gamma}) =\underline{\gamma}''$ and let $\underline{\gamma}' = \eta_1(\underline{\gamma})$. Then by assumption $\underline{\gamma}, \underline{\gamma}'$ are $d_1$-equivalent and $\underline{\gamma}',\underline{\gamma}''$ are $d_2$-equivalent. Therefore, there exist integers $0 \leq n_1, n_2, m_1, m_2 \leq d_1$ and $0 \leq n_1', n_2', m_1', m_2' \leq d_2$ such that  ${t}^{m_1} {s}^{m_2}\underline{\gamma} = {t}^{n_1} {s}^{n_2}\underline{\gamma}' $ and ${t}^{m_1'} {s}^{m_2'}\underline{\gamma}' = {t}^{n_1'} {s}^{n_2'}\underline{\gamma}'' $ are both non-trivial arcs. In particular, this implies that $\ell({t}^{m_1} {s}^{m_2}\underline{\gamma})>0$  and $\ell({t}^{m_1'} {s}^{m_2'}\underline{\gamma'})>0 $. Then
\begin{align*}
    {t}^{m_1+m_1'} {s}^{m_2+m_2'}\underline{\gamma} = {t}^{m_1'} {s}^{m_2'}{t}^{m_1} {s}^{m_2}\underline{\gamma} &= {t}^{m_1'} {s}^{m_2'}{t}^{n_1} {s}^{n_2}\underline{\gamma}'\\ &= {t}^{n_1} {s}^{n_2}{t}^{m_1'} {s}^{m_2'}\underline{\gamma}' = {t}^{n_1} {s}^{n_2}{t}^{n_1'} {s}^{n_2'}\underline{\gamma}'' = {t}^{n_1+n_1'} {s}^{n_2+n_2'}\underline{\gamma}''\end{align*}
where the first and third equalities hold by Lemma~\ref{partcommutivity}. Since $0 \leq m_1+m_1', m_2+m_2', n_1+n_1', n_2+n_2'\leq d_1 +d_2$, it follows that $\underline{\gamma}$ and $\underline{\gamma}''$ are $d_1 +d_2$-equivalent and so $\mathcal{B}'$ and $\mathcal{B}''$ are $d_1 +d_2$-matched. Therefore $d_B(\mathcal{B},\mathcal{B}'') \leq d_1 +d_2 = d_B(\mathcal{B},\mathcal{B}') + d_B(\mathcal{B'},\mathcal{B}'')$ as required.

\end{proof}

In Section~\ref{Interleaving}, we showed that we could calculate the interleaving distance between two objects by restricting to the summands lying in the different components of the Auslander Reiten quiver separately. We can do something similar for the bottleneck distance.

For a barcode $\mathcal{B}$, let $\mathcal{B}_\mathcal{P}$ ($\mathcal{B}_\mathcal{I}$ or $\mathcal{B}_\mathcal{R}$) denote the subset of $\mathcal{B}$ consisting of objects which are preprojective (preinjective or regular respectively), so,
\[\mathcal{B}  = \mathcal{B}_\mathcal{P} \sqcup \mathcal{B}_\mathcal{R} \sqcup \mathcal{B}_\mathcal{I}.\]

\begin{proposition}
    Let $\mathcal{B}$ and $\mathcal{B}'$ be barcodes. Then
    \[ d_B(\mathcal{B}, \mathcal{B}') = \max\{ d_B(\mathcal{B}_\mathcal{P},\mathcal{B}'_\mathcal{P}),d_B(\mathcal{B}_\mathcal{R},\mathcal{B}'_\mathcal{R}),d_B(\mathcal{B}_\mathcal{I},\mathcal{B}'_\mathcal{I}). \}\]
\end{proposition}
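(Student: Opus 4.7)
The plan is to prove both inequalities by using the observation that the operations $s$ and $t$ preserve Auslander–Reiten components, and hence any $\delta$-matching must respect the decomposition $\mathcal{B} = \mathcal{B}_\mathcal{P} \sqcup \mathcal{B}_\mathcal{R} \sqcup \mathcal{B}_\mathcal{I}$. The ``$\leq$'' direction is then essentially formal, and the ``$\geq$'' direction rests on the key observation above.

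The main technical step is to establish that if two graded arcs or closed curves $\underline{\gamma}, \underline{\gamma}'$ are $\delta$-equivalent, then they lie in the same AR component. This follows from Remark~\ref{remarkends}(6) and Definition~\ref{stclosed}: the operations $s$ and $t$ only ever move along a single AR mesh arrow (or, for homogeneous tubes, along the ray), and so $s^k$ and $t^k$ preserve each of $\mathcal{P}$, $\mathcal{R}$, and $\mathcal{I}$, as well as each individual regular tube. If $t^{m_1} s^{m_2} \underline{\gamma} = t^{n_1} s^{n_2} \underline{\gamma}'$ is non-trivial, then this common object lies in the component of both $\underline{\gamma}$ and $\underline{\gamma}'$, forcing them into the same component.

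For the ``$\leq$'' direction, I would let $\delta$ be the maximum of the three component distances (assumed finite; the infinite case is trivial) and select $\delta$-matchings $\eta_\mathcal{P}, \eta_\mathcal{R}, \eta_\mathcal{I}$ on each component. (Here I use that a $\delta_0$-matching is automatically a $\delta$-matching for any $\delta \geq \delta_0$, since enlarging $\delta$ relaxes both the coverage condition $\mathcal{B}_{2\delta} \subseteq \operatorname{Coim}\eta$ and the $\delta$-equivalence condition.) The disjoint union $\eta = \eta_\mathcal{P} \sqcup \eta_\mathcal{R} \sqcup \eta_\mathcal{I}$ is then a matching $\mathcal{B} \not\to \mathcal{B}'$, and since $\mathcal{B}_{2\delta} = (\mathcal{B}_\mathcal{P})_{2\delta} \sqcup (\mathcal{B}_\mathcal{R})_{2\delta} \sqcup (\mathcal{B}_\mathcal{I})_{2\delta}$ and analogously for $\mathcal{B}'$, all three conditions of Definition~\ref{matching} are verified componentwise.

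For the ``$\geq$'' direction, I would take a $\delta$-matching $\eta \colon \mathcal{B} \not\to \mathcal{B}'$ with $\delta = d_B(\mathcal{B},\mathcal{B}')$. By the key observation, whenever $\eta(\underline{\gamma}) = \underline{\gamma}'$ the two objects lie in the same AR component, so $\eta$ restricts to matchings $\eta_\mathcal{P}, \eta_\mathcal{R}, \eta_\mathcal{I}$ on each pair of subcodes. Each restriction inherits the $\delta$-equivalence condition on matched pairs, and the coverage condition on, say, $\mathcal{P}$ follows from $(\mathcal{B}_\mathcal{P})_{2\delta} = \mathcal{B}_{2\delta} \cap \mathcal{B}_\mathcal{P} \subseteq \operatorname{Coim}\eta \cap \mathcal{B}_\mathcal{P} = \operatorname{Coim}\eta_\mathcal{P}$, and similarly for image and for the other components. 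Thus each of $d_B(\mathcal{B}_\mathcal{P},\mathcal{B}'_\mathcal{P})$, $d_B(\mathcal{B}_\mathcal{R},\mathcal{B}'_\mathcal{R})$, $d_B(\mathcal{B}_\mathcal{I},\mathcal{B}'_\mathcal{I})$ is bounded by $\delta$, giving the desired inequality. The only step requiring genuine content is the component-preservation claim, and this is essentially immediate from the geometric description of $s$ and $t$.
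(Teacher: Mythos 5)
Your proof is correct and takes essentially the same approach as the paper's: for the ``$\geq$'' direction, restrict the matching to each component using the observation that $\delta$-equivalent objects lie in the same AR component, and for the ``$\leq$'' direction, take the disjoint union of $\delta$-matchings on each component. The only difference is that you make explicit two small points the paper leaves implicit — the monotonicity of $\delta$-matchings in $\delta$, and the justification (via $s$ and $t$ preserving components) for why matched objects must lie in the same component — so your write-up is, if anything, slightly more careful on these minor points.
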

\begin{proof}
Suppose $\mathcal{B}$ and $\mathcal{B}'$ are $\delta$-matched with matching $\eta$ and let $\mathcal{C} \in \{ \mathcal{P},\mathcal{I},\mathcal{R} \}$ be a component. Consider $\eta'$ to be the restriction of $\eta$ to $\mathcal{B_\mathcal{C}}$. For any $N \in \operatorname{Coim}\eta$, then $N$ and $\eta(N)$ must be in the same component, since they are $\delta$-equivalent. Therefore $\eta' \colon \mathcal{B_\mathcal{C}} \not\to \mathcal{B'_\mathcal{C}}$. Furthermore, $N$ and $\eta'(N)$ inherit $\delta$-equivalence and 
\begin{align*}   
(\mathcal{B_\mathcal{C}})_{2\delta} &= \mathcal{B}_{2\delta} \cap \mathcal{C}  \subseteq \operatorname{Coim}\eta \cap \mathcal{C} = \operatorname{Coim}\eta', \\
(\mathcal{B_\mathcal{C}})_{2\delta}' &= \mathcal{B}_{2\delta}' \cap \mathcal{C}  \subseteq \operatorname{Im}\eta \cap \mathcal{C} = \operatorname{Im}\eta'.
\end{align*} 
Therefore, $\mathcal{B_\mathcal{C}}$ and $\mathcal{B'_\mathcal{C}}$ are also $\delta$-matched. It follows that \[ d_B(\mathcal{B}, \mathcal{B}') \geq \max\{ d_B(\mathcal{B}_\mathcal{P},\mathcal{B}'_\mathcal{P}), d_B(\mathcal{B}_\mathcal{R},\mathcal{B}'_\mathcal{R}), d_B(\mathcal{B}_\mathcal{I}, \mathcal{B}'_\mathcal{I}) \}.\]
Conversely, suppose $\eta_\mathcal{P} \colon \mathcal{B_\mathcal{P}} \not\to \mathcal{B'_\mathcal{P}}$, $\eta_\mathcal{R} \colon \mathcal{B_\mathcal{R}} \not\to \mathcal{B'_\mathcal{R}}$ and $\eta_\mathcal{I} \colon \mathcal{B_\mathcal{I}} \not\to \mathcal{B'_\mathcal{I}}$ are $\delta$-matchings. Then $\eta = \eta_\mathcal{P} \sqcup \eta_\mathcal{R} \sqcup \eta_\mathcal{I} \colon \mathcal{B} \not\to \mathcal{B}'$ is a matching with the property that if $\eta(N) =N'$ then $N$ and $N'$ are $\delta$-equivalent. Furthermore,
\begin{align*}   
\mathcal{B}_{2\delta} &= (\mathcal{B}_\mathcal{P})_{2\delta} \sqcup (\mathcal{B}_\mathcal{R})_{2\delta} \sqcup (\mathcal{B}_\mathcal{I})_{2\delta}  \subseteq \operatorname{Coim}\eta_\mathcal{P} \sqcup \operatorname{Coim}\eta_\mathcal{R} \sqcup \operatorname{Coim}\eta_\mathcal{I} = \operatorname{Coim}\eta, \\
\mathcal{B}_{2\delta}' &= (\mathcal{B}'_\mathcal{P})_{2\delta} \sqcup (\mathcal{B}'_\mathcal{R})_{2\delta} \sqcup (\mathcal{B}'_\mathcal{I})_{2\delta}  \subseteq \operatorname{Im}\eta_\mathcal{P} \sqcup \operatorname{Im}\eta_\mathcal{R} \sqcup \operatorname{Im}\eta_\mathcal{I} = \operatorname{Im}\eta.
\end{align*} 
The result follows.
\end{proof}

\section{Stability for non-regular persistence modules}\label{StabilityNonreg}
We consider persistence modules that are either in the preprojective or the preinjective components in type $\tilde A$. There is a relation on the indecomposable objects in the preprojective or the preinjective components, where for any indecomposable objects $M,N$, 
\[ M \leq N \qquad \iff \Hom(M,N) \neq 0.\]

\begin{lemma}
    This is a well-defined partial order.
\end{lemma}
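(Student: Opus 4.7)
The plan is to verify reflexivity, antisymmetry, and transitivity for $\leq$. Reflexivity is immediate since $\operatorname{id}_M\in\Hom(M,M)$ is nonzero for every indecomposable $M$, so $M\leq M$.

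Both antisymmetry and transitivity will be reduced to a combinatorial reachability question in the AR-quiver. By Proposition~\ref{nonregcomponents}, the AR-quivers of $\mathcal{P}$ and $\mathcal{I}$ are isomorphic as translation quivers to $(-\IN)Q^{\text{op}}$ and $\IN Q^{\text{op}}$ respectively. Both are acyclic as ordinary quivers: every arrow either lies within a single copy of $Q^{\text{op}}$ at a fixed level $n$, or goes from level $n+1$ to level $n$. Since $Q$ is of type $\tilde A$ and is not cyclically oriented, $Q^{\text{op}}$ is also acyclic, and combining an acyclicity-witness for $Q^{\text{op}}$ with the level function yields a potential that strictly decreases along every arrow of $(-\IN)Q^{\text{op}}$ or $\IN Q^{\text{op}}$. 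Consequently, reachability in the AR-quiver is itself a partial order on its vertices.

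The central claim is that for indecomposables $M,N$ in the same non-regular component, $\Hom(M,N)\neq 0$ if and only if there is a directed path from $M$ to $N$ in the AR-quiver. The ``only if" direction uses that any morphism between indecomposables in a standard component is a $\mathbf{k}$-linear combination of compositions of irreducible morphisms, each corresponding to an arrow in the AR-quiver. The ``if" direction uses the standardness of $\mathcal{P}$ and $\mathcal{I}$: the full subcategory of indecomposables is equivalent to the mesh category of the AR-quiver, whose $\Hom$-spaces are spanned by paths modulo mesh relations. Because each mesh relation is of the form $g_1 f_1 + g_2 f_2 = 0$ — a relation between the two paths around a single square — no single path is annihilated, and so the existence of at least one path from $M$ to $N$ guarantees a nonzero element of $\Hom(M,N)$.

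Granted this equivalence, antisymmetry follows from the acyclicity of the AR-quiver (paths $M\to N$ and $N\to M$ would concatenate to form a non-trivial cycle, forcing $M\cong N$), and transitivity follows from concatenation of paths. The main obstacle is precisely the equivalence ``$\Hom$-nonzero $\iff$ path exists"; this works cleanly in type~$\tilde A$ because each mesh has at most two middle terms so the mesh relations are particularly transparent, but care is still required to check that no iterated combination of mesh relations collapses the span of paths from $M$ to $N$ to zero.
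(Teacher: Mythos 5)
Your overall approach coincides with the paper's: reflexivity comes from the identity, and antisymmetry and transitivity are reduced to the acyclicity of the AR-quiver together with the claim that $\Hom(M,N)\neq 0$ if and only if there is a directed path from $M$ to $N$. The paper compresses these last two steps into one-line assertions (``the structure of the Auslander-Reiten quiver'' and ``the composition of basis morphisms is non-zero''); your potential-function argument makes the acyclicity step explicit, which is a nice clarification.

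The caveat you raise at the end is, however, a genuine gap, and your earlier sentence (``no single path is annihilated, and so the existence of at least one path from $M$ to $N$ guarantees a nonzero element of $\Hom(M,N)$'') is a non sequitur: the fact that each mesh relation is individually binomial does not by itself rule out that an iterated substitution kills a path. The analogous statement already fails for a finite $A_n$. For the equioriented $A_3$ with $1\to 2\to 3$ there is a path $[3,4)\to[2,4)\to[2,3)$ in the AR-quiver, yet $\Hom([3,4),[2,3))=0$ since the two representations have disjoint support; the culprit is the mesh $[3,4)\to[2,4)\to[2,3)$, which has only one nonzero middle term (because $\tau[2,3)=[3,4)$ is the simple projective at the edge of the AR-quiver), so the relation collapses to a zero relation $gf=0$. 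What rescues your argument for $\mathcal{P}$ and $\mathcal{I}$ in type $\tilde{A}_{p,q}$ is precisely that every vertex of a cycle has degree two, so every mesh lying entirely inside $(-\IN)Q^{\text{op}}$ or $\IN Q^{\text{op}}$ has exactly two nonzero middle terms and the degenerate one-term meshes never occur; together with standardness this yields the path characterisation of nonvanishing $\Hom$ in these components. To make the proof complete you would need to spell out this observation (or cite the standard directedness result for postprojective/postinjective components), and it is worth noting that the paper leaves exactly the same point implicit.
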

\begin{proof}
   Reflexivity follows from the existence of the identity map for any indecomposable object. Antisymmetry follows from the structure of the Auslander-Reiten quiver (see Proposition~\ref{nonregcomponents}). Transitivity holds, since the composition of basis morphisms is non-zero.
\end{proof}
We can also consider the corresponding partial order on arcs in the geometric model. For any indecomposable objects $M,N$ in either the preprojective or the preinjective component, the arcs $\gamma_M$ and $\gamma_N$ connect the two boundary circles of the annulus. Then $\Hom(M,N)\neq 0$ if and only if there is a graded intersection point between $\underline{\gamma}_M$ and $\underline{\gamma}_N$. Lifting $\underline{\gamma}_M$ to the universal cover, there is such an intersection point if and only if there is a graded intersection point between the lift $\tilde{\underline{\gamma}}_M$ and some lift $\tilde{\underline{\gamma}}_N$ of $\underline{\gamma}_N$. It can be seen that this happens precisely when $\underline{\gamma}_N = t^{n_1}s^{n_2} \underline{\gamma}_M$ for some $n_1, n_2 \in \IZ_{\geq 0}$. This is illustrated in the example below, where the arrows indicate the positive orientation of the boundary components.
\begin{center}
    
\definecolor{yqyqyq}{rgb}{0.5019607843137255,0.5019607843137255,0.5019607843137255}
\begin{tikzpicture}[line cap=round,line join=round,x=0.5cm,y=0.5cm]
\draw [line width=1pt] (-12,3)-- (-4,3);
\draw [line width=1pt] (-12,0)-- (-4,0);
\draw [line width=1pt] (-8,0)-- (-8,3);
\draw [line width=1pt] (-8,1.65) -- (-7.82,1.5);
\draw [line width=1pt] (-8,1.65) -- (-8.18,1.5);
\draw [line width=1pt] (0,0)-- (8,0);
\draw [line width=1pt] (0,3)-- (8,3);
\draw [line width=1pt,color=yqyqyq] (4,0)-- (4,3);
\draw [line width=1pt,color=yqyqyq] (4,1.65) -- (4.18,1.5);
\draw [line width=1pt,color=yqyqyq] (4,1.65) -- (3.82,1.5);
\draw [line width=1pt] (2,0)-- (7,3);
\draw [line width=1pt] (4.628623938856879,1.5771743633141295) -- (4.592609235976952,1.345651273371742);
\draw [line width=1pt] (4.628623938856879,1.5771743633141295) -- (4.407390764023042,1.654348726628258);
\draw [->,line width=1pt] (-6,-0.6) -- (-7,-0.6);
\draw [->,line width=1pt] (-11,3.6) -- (-10,3.6);
\draw [->,line width=1pt] (6,-0.6) -- (5,-0.6);
\draw [->,line width=1pt] (1,3.6) -- (2,3.6);
\draw [color=black] (-8,0)-- ++(-2.5pt,0 pt) -- ++(5pt,0 pt) ++(-2.5pt,-2.5pt) -- ++(0 pt,5pt);
\draw[color=black] (-8,-0.6) node {$u$};
\draw [color=black] (-8,3)-- ++(-2.5pt,0 pt) -- ++(5pt,0 pt) ++(-2.5pt,-2.5pt) -- ++(0 pt,5pt);
\draw[color=black] (-8,3.5) node {$v$};
\draw[color=black] (-8.5,1.73) node {$\tilde \gamma$};
\draw [color=black] (4,0)-- ++(-2.5pt,0 pt) -- ++(5pt,0 pt) ++(-2.5pt,-2.5pt) -- ++(0 pt,5pt);
\draw[color=black] (4,-0.6) node {$u$};
\draw [color=black] (4,3)-- ++(-2.5pt,0 pt) -- ++(5pt,0 pt) ++(-2.5pt,-2.5pt) -- ++(0 pt,5pt);
\draw[color=black] (4,3.5) node {$v$};
\draw[color=black] (7.7,3.6) node {$v^{+++}$};
\draw[color=yqyqyq] (3.5,1.73) node {$\tilde\gamma$};
\draw [color=black] (2,0)-- ++(-2.5pt,0 pt) -- ++(5pt,0 pt) ++(-2.5pt,-2.5pt) -- ++(0 pt,5pt);
\draw[color=black] (2.,-0.5) node {$u^{++}$};
\draw [color=black] (7,3)-- ++(-2.5pt,0 pt) -- ++(5pt,0 pt) ++(-2.5pt,-2.5pt) -- ++(0 pt,5pt);
\draw[color=black] (5.78,1.47) node {$t^2 s^3 \tilde \gamma$};
\draw [color=black] (3,0)-- ++(-2.5pt,0 pt) -- ++(5pt,0 pt) ++(-2.5pt,-2.5pt) -- ++(0 pt,5pt);
\draw [color=black] (5,3)-- ++(-2.5pt,0 pt) -- ++(5pt,0 pt) ++(-2.5pt,-2.5pt) -- ++(0 pt,5pt);
\draw [color=black] (6,3)-- ++(-2.5pt,0 pt) -- ++(5pt,0 pt) ++(-2.5pt,-2.5pt) -- ++(0 pt,5pt);
\draw [color=black] (-11,3)-- ++(-2.5pt,0 pt) -- ++(5pt,0 pt) ++(-2.5pt,-2.5pt) -- ++(0 pt,5pt);
\draw [color=black] (-10,3)-- ++(-2.5pt,0 pt) -- ++(5pt,0 pt) ++(-2.5pt,-2.5pt) -- ++(0 pt,5pt);
\draw [color=black] (-9,3)-- ++(-2.5pt,0 pt) -- ++(5pt,0 pt) ++(-2.5pt,-2.5pt) -- ++(0 pt,5pt);
\draw [color=black] (-7,3)-- ++(-2.5pt,0 pt) -- ++(5pt,0 pt) ++(-2.5pt,-2.5pt) -- ++(0 pt,5pt);
\draw [color=black] (-6,3)-- ++(-2.5pt,0 pt) -- ++(5pt,0 pt) ++(-2.5pt,-2.5pt) -- ++(0 pt,5pt);
\draw [color=black] (-5,3)-- ++(-2.5pt,0 pt) -- ++(5pt,0 pt) ++(-2.5pt,-2.5pt) -- ++(0 pt,5pt);
\draw [color=black] (-11,0)-- ++(-2.5pt,0 pt) -- ++(5pt,0 pt) ++(-2.5pt,-2.5pt) -- ++(0 pt,5pt);
\draw [color=black] (-10,0)-- ++(-2.5pt,0 pt) -- ++(5pt,0 pt) ++(-2.5pt,-2.5pt) -- ++(0 pt,5pt);
\draw [color=black] (-9,0)-- ++(-2.5pt,0 pt) -- ++(5pt,0 pt) ++(-2.5pt,-2.5pt) -- ++(0 pt,5pt);
\draw [color=black] (-7,0)-- ++(-2.5pt,0 pt) -- ++(5pt,0 pt) ++(-2.5pt,-2.5pt) -- ++(0 pt,5pt);
\draw [color=black] (-6,0)-- ++(-2.5pt,0 pt) -- ++(5pt,0 pt) ++(-2.5pt,-2.5pt) -- ++(0 pt,5pt);
\draw [color=black] (-5,0)-- ++(-2.5pt,0 pt) -- ++(5pt,0 pt) ++(-2.5pt,-2.5pt) -- ++(0 pt,5pt);
\draw [color=black] (1,3)-- ++(-2.5pt,0 pt) -- ++(5pt,0 pt) ++(-2.5pt,-2.5pt) -- ++(0 pt,5pt);
\draw [color=black] (2,3)-- ++(-2.5pt,0 pt) -- ++(5pt,0 pt) ++(-2.5pt,-2.5pt) -- ++(0 pt,5pt);
\draw [color=black] (3,3)-- ++(-2.5pt,0 pt) -- ++(5pt,0 pt) ++(-2.5pt,-2.5pt) -- ++(0 pt,5pt);
\draw [color=black] (1,0)-- ++(-2.5pt,0 pt) -- ++(5pt,0 pt) ++(-2.5pt,-2.5pt) -- ++(0 pt,5pt);
\draw [color=black] (5,0)-- ++(-2.5pt,0 pt) -- ++(5pt,0 pt) ++(-2.5pt,-2.5pt) -- ++(0 pt,5pt);
\draw [color=black] (6,0)-- ++(-2.5pt,0 pt) -- ++(5pt,0 pt) ++(-2.5pt,-2.5pt) -- ++(0 pt,5pt);
\draw [color=black] (7,0)-- ++(-2.5pt,0 pt) -- ++(5pt,0 pt) ++(-2.5pt,-2.5pt) -- ++(0 pt,5pt);
\end{tikzpicture}
\end{center}

 We can define a bipartite graph $G(M,N)$ which has vertex set $V= V_M \sqcup V_N$ with parts 
\begin{align*} V_M &= \{M_i \mid M_i  \: \text{is an indecomposable summand of}\:  M\} \\ 
 V_N &= \{N_i \mid N_i  \: \text{is an indecomposable summand of}\:  N\} 
\end{align*}
and the set of edges is given by
\[ E = \{ (M_i, N_j) \in V_M \times V_N \mid M_i \leq \tau^{-\delta}N_j \leq \tau^{-2\delta}M_i \}. \]
Note that $G(M,N)= G(N,M)$ since the functors $\tau^{\pm 1}$ are order preserving.
\begin{lemma}\label{Halls}
Suppose $M,N$ are $\delta$-interleaved persistence modules which are both in either the preprojective component or the preinjective component. Then there is a perfect matching on the graph $G(M,N)$.
\end{lemma}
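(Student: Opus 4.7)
The proof will invoke Hall's Marriage Theorem (as signalled by the name of the lemma) applied to the bipartite graph $G(M,N)$ with parts $V_M$ and $V_N$. Since $G(M,N)=G(N,M)$, it suffices to verify Hall's condition on one side: for every finite subset $S\subseteq V_M$, the neighbourhood satisfies $|N_G(S)|\geq|S|$. A perfect matching (and in particular $|V_M|=|V_N|$) then follows.

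The first key input is that in a preprojective or preinjective component of type $\tilde{A}$, every indecomposable summand $M_i$ of $M$ corresponds to a graded arc joining the two distinct boundary circles of the annulus, so it has infinite length. Consequently the images $s^k M_i$ and $t^k M_i$ are nontrivial for every $k\geq 0$, which via the AR-mesh construction of $\Phi$ (Definition~\ref{phidefn}) gives $\Phi^{2\delta}_{M_i}\neq 0$ for every $\delta\geq 0$. Decomposing the interleaving morphisms via Krull--Schmidt into components $f_{ji}\colon M_i\to\tau^{-\delta}N_j$ and $g_{ij}\colon N_j\to\tau^{-\delta}M_i$, the relation $\tau^{-\delta}g\circ f=\Phi^{2\delta}_M$ yields, for each $i$, the identity
\[ \sum_{j}\tau^{-\delta}g_{ij}\circ f_{ji} = \Phi^{2\delta}_{M_i}, \]
while all sums $\sum_j \tau^{-\delta}g_{kj}\circ f_{ji}$ with $k\neq i$ vanish. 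For each $i\in S$ the nonzero right-hand side forces some $j$ with $f_{ji}\neq 0$ and $g_{ij}\neq 0$; this gives $M_i\leq\tau^{-\delta}N_j\leq\tau^{-2\delta}M_i$, an edge of $G(M,N)$, so each individual vertex of $V_M$ already has a neighbour.

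The main obstacle is lifting this pointwise statement to the full counting inequality. My plan is to set $J^{\ast}=\{j\in V_N:\exists\, i\in S \text{ with } f_{ji}\neq 0 \text{ and } g_{ij}\neq 0\}$, so that $J^{\ast}\subseteq N_G(S)$ by construction and the previous paragraph shows each $i\in S$ contributes some element. The displayed identity, restricted to $M_S=\bigoplus_{i\in S}M_i$, exhibits the block-diagonal morphism $\Phi^{2\delta}_{M_S}$ as factoring through $\bigoplus_{j\in J^{\ast}}\tau^{-\delta}N_j$. Using the crucial fact that Hom-spaces between indecomposables in these standard components are at most one-dimensional, this factorisation can be encoded as a matrix product whose composition has rank $|S|$, which forces $|J^{\ast}|\geq|S|$. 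The delicate technical point, and the one I expect to require the most care, is controlling categorical compositions that vanish despite both factors being nonzero (via the mesh relations in the standard AR-component), so that the rank-$|S|$ argument passes through intact.
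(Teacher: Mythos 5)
Your proposal follows the same overall strategy as the paper — Hall's marriage theorem applied to $G(M,N)$, with the counting inequality $|S|\leq|\operatorname{Nbd}_G(S)|$ extracted from the factorisation of $\Phi^{2\delta}_{M_S}$ through $\tau^{-\delta}N$. The observation that the preprojective/preinjective arcs join distinct boundary circles (hence have infinite length, hence $\Phi^{2\delta}_{M_i}\neq 0$) is a pleasant way to justify a fact the paper leaves implicit.

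However, the proposal rests on a claim that is simply false: Hom-spaces between indecomposables in the preprojective (or preinjective) component of type $\tilde A$ are \emph{not} at most one-dimensional. Already for the Kronecker quiver $\tilde A_{1,1}$ one has $\dim\Hom(P_2,P_1)=2$, and in general these spaces grow as the objects get further apart along the component. So the "matrix product whose composition has rank $|S|$" that you want to write down does not directly exist: the entries of your would-be matrices are elements of Hom-spaces that are not canonically identified with $\mathbf{k}$, and a rank comparison over $\mathbf{k}$ cannot be performed. This is precisely the gap the paper fills by constructing an auxiliary functor $\mathcal{F}\colon\mathcal{S}\to\operatorname{vect}_{\IC}$ taking $\bigoplus_i M_i\mapsto\bigoplus_i\IC$ and sending a morphism $\sum_m\alpha_mf_m$ (expressed in a basis of compositions of irreducible maps) to the scalar $\sum_m\alpha_m$. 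The key structural input — which you do not have and which is where the real work sits — is that compositions of such basis morphisms are again nonzero basis morphisms, so that $\mathcal{F}$ is functorial; this is what turns the categorical factorisation into a genuine $|S|\times|J|$ times $|J|\times|S|$ product of complex matrices whose composite is an invertible diagonal matrix, giving $|J|\geq|S|$. Your identified "delicate technical point" (compositions vanishing) is part of this, but it is not the whole story: the essential missing idea is the collapse of the (generically multi-dimensional) Hom-spaces to one dimension via $\mathcal{F}$, without which the rank argument has no linear-algebraic content.
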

\begin{proof}
We will use Hall's Marriage theorem to prove this result. To show that there is a matching which covers $V_M$, it suffices to show that for every subset of the vertices $I \subseteq V_M$, then
\[ |I| \leq |J|,\]
where $J=\operatorname{Nbd}_G(I) \subseteq V_N$ is the neighbourhood of $I$ in $G(M,N)$. Let $I \subseteq V_M$ be any subset of the vertices and denote by 
\[ M_I = \bigoplus_{M_i \in I} M_i \; \text{and }  N_J = \bigoplus_{N_j \in J} N_j \]
the corresponding summands of $M$ and $N$ respectively.
We start by showing that the map \[\Phi^{2\delta}_{M_I}: M_I \to \tau^{-2\delta}M_I\]
must factor through $\tau^{-\delta}N_J$. 
By the definition of $\delta$-interleaving, the morphism $\Phi^{2\delta}_{M_I}: M_I \to \tau^{-2\delta}M_I$ factors through $\tau^{-\delta}N$.  If $\tau^{-\delta}N_j$ is any summand of $\tau^{-\delta}N$ through which $\Phi^{2\delta}_{M_I}$ factors non-trivially, then certainly  $\Hom( M_i , \tau^{-\delta}N_j) \neq 0$ and  $\Hom( \tau^{-\delta}N_j, \tau^{-2\delta}M_i) \neq 0$, so $ M_i \leq \tau^{-\delta}N_j \leq \tau^{-2\delta}M_i$. Therefore, by definition of $G(M,N)$ we have  $(M_i, N_j) \in E$ and so $ N_j \in J$. 

Consider the full subcategory $\mathcal{S}$ of persistence modules whose summands all lie in the component. There is a functor from $\mathcal{S}$ to the category of finite dimensional vector spaces $\operatorname{vect}_{\IC}$ which takes an object $M = \oplus_{i \in I}M_i$ to $\mathcal{F}M = \oplus_{i \in I}\IC$. Any non-zero morphism $f \in \Hom(M_i,N_j)$ can be written as a linear combination of `basis' morphisms $\{f_m \mid {m=1, \dots, {\dim(\Hom(M_i,N_j)})}\}$ each of which factors as a finite composition of the irreducible morphisms in the component. By construction, these basis morphisms are compatible with composition in the sense that the composition of basis morphisms is again a (nonzero) basis morphism. Then $\mathcal{F}$ acts on a morphism between indecomposable objects by $\mathcal{F}(f) =\mathcal{F}(\sum_{m} \alpha_mf_m) = \sum_{m} \alpha_m \in \IC$, and this extends linearly to morphisms between arbitrary objects.

Applying the functor $\mathcal{F}$, 
we see that
\[ \mathcal{F}( \Phi^{2\delta}_{M_I}):  \mathcal{F}( M_I) \stackrel{\scriptstyle  \mathcal{F}(f) }{\too}  \mathcal{F}(  \tau^{-\delta}N_J)  \stackrel{\scriptstyle  \mathcal{F}(\tau^{-\delta}g) }{\too}   \mathcal{F}( \tau^{-2\delta}M_I)\]
where $\mathcal{F}(f)$ and $\mathcal{F}(\tau^{-\delta}g)$ are respectively $|J| \times |I|$ and $|I| \times |J|$  matrices with complex entries. In particular, since $\mathcal{F}( \Phi^{2\delta}_{M_I})$ is a rank $|I|$ diagonal matrix, it follows that $ |I| \leq |J|$ as required. Therefore, by Hall's Marriage theorem, there is a matching of $G(M,N)$ which covers $V_M$. This implies that $|V_M| \leq |V_N|$.

A symmetric argument shows that there is a matching of $G(M,N)$ which covers $V_N$, so $|V_M| \geq |V_N|$. Therefore, $|V_M| = |V_N|$ and so the matching which covers $V_M$ (or $V_N$) is in fact a perfect matching.
\end{proof}

\begin{proposition}
        Suppose $M,N$ are $\delta$-interleaved persistence modules which are both in either the preprojective component or the preinjective component. Then the corresponding barcodes $\mathcal{B}(M)$ and $ \mathcal{B}(N)$ are $\delta$-matched and so \[ d_I(M,N) \geq d_B(\mathcal{B}(M), \mathcal{B}(N)).\]
\end{proposition}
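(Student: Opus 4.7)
The plan is to use the perfect matching produced by Lemma~\ref{Halls} as the candidate $\delta$-matching $\eta \colon \mathcal{B}(M) \not\to \mathcal{B}(N)$. Since every summand of both $M$ and $N$ is then matched, the containment conditions $\mathcal{B}(M)_{2\delta} \subseteq \operatorname{Coim}\eta$ and $\mathcal{B}(N)_{2\delta} \subseteq \operatorname{Im}\eta$ are automatic, and the entire content of the proof reduces to the following: if $(M_i,N_j)$ is an edge of $G(M,N)$ then the corresponding graded arcs $\underline{\gamma}_{M_i}$ and $\underline{\gamma}_{N_j}$ are $\delta$-equivalent.

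The key geometric fact I will use is that in the preprojective and preinjective components the indecomposable objects correspond to arcs between the two boundary circles of the annulus, hence have infinite length. By Lemma~\ref{partcommutivity} the operations $s$ and $t$ therefore commute on these arcs, and by Lemma~\ref{stmovements} they act bijectively. Combined with Remark~\ref{remarkends}(7), $\tau^{-\delta}$ corresponds to $t^{\delta}s^{\delta}$. Moreover, from the discussion preceding Lemma~\ref{Halls}, the relation $P \leq Q$ between indecomposables in such a component translates into the equation $\underline{\gamma}_Q = t^{a}s^{b}\underline{\gamma}_P$ for some $a,b \in \IZ_{\geq 0}$.

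Translating the edge condition $M_i \leq \tau^{-\delta}N_j \leq \tau^{-2\delta}M_i$ then yields two equalities
\[ t^{\delta}s^{\delta}\underline{\gamma}_{N_j} = t^{n_1}s^{n_2}\underline{\gamma}_{M_i}, \qquad  t^{\delta}s^{\delta}\underline{\gamma}_{M_i} = t^{m_1}s^{m_2}\underline{\gamma}_{N_j}\]
for some $n_1,n_2,m_1,m_2 \in \IZ_{\geq 0}$. Composing these and using bijectivity of $s,t$ on infinite arcs forces $n_1+m_1 = 2\delta$ and $n_2+m_2=2\delta$, so all four exponents lie in $[0,2\delta]$. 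I can then set $a_1 = \max(0,\delta-m_1)$, $b_1 = \max(0,m_1-\delta)$ and analogously $a_2,b_2$ in the $s$-direction; each of these lies in $[0,\delta]$, and a direct check using $\underline{\gamma}_{M_i} = t^{m_1-\delta}s^{m_2-\delta}\underline{\gamma}_{N_j}$ (formally valid because $s,t$ are bijective on infinite arcs) gives
\[ t^{a_1}s^{a_2}\underline{\gamma}_{M_i} = t^{b_1}s^{b_2}\underline{\gamma}_{N_j},\]
and this arc is non-trivial since it lies in the same $s,t$-orbit as $\underline{\gamma}_{M_i}$, which has infinite length. This is exactly the $\delta$-equivalence required by Definition~\ref{matching}.

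The main obstacle I anticipate is purely bookkeeping: confirming that all exponents remain non-negative and that the intermediate identities truly take place among arcs (not merely as formal symbols). The infinite length of non-regular arcs is doing the heavy lifting here, because it is what licences freely shifting by negative powers of $s,t$ and what allows the cancellation argument that produces the constraint $n_1+m_1 = n_2+m_2 = 2\delta$. Once this is verified, $\eta$ satisfies all conditions of Definition~\ref{matching}, establishing that $\mathcal{B}(M)$ and $\mathcal{B}(N)$ are $\delta$-matched and hence $d_B(\mathcal{B}(M),\mathcal{B}(N)) \leq \delta$. Taking the infimum over $\delta$ for which $M,N$ are $\delta$-interleaved yields $d_B(\mathcal{B}(M),\mathcal{B}(N)) \leq d_I(M,N)$.
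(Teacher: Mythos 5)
Your proof is correct and takes essentially the same route as the paper: both start from the perfect matching of Lemma~\ref{Halls}, observe that perfectness makes the coimage/image conditions automatic, translate the edge condition $M_i \leq \tau^{-\delta}N_j \leq \tau^{-2\delta}M_i$ into arc equalities yielding $n_1+m_1=n_2+m_2=2\delta$, and then choose exponents in $[0,\delta]$ to exhibit the $\delta$-equivalence (the paper's choice $k_i=\min\{n_i,\delta\}$ produces exactly your $\max(0,\delta-m_i)$ and $\max(0,m_i-\delta)$ via $m_i=2\delta-n_i$). The non-triviality argument using that the arcs span both boundary components and so $s,t$ are invertible also matches the paper's reasoning.
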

\begin{proof}
    We consider the perfect matching defined in Lemma~\ref{Halls}. This induces a perfect matching $\eta: \mathcal{B}(M) \too \mathcal{B}(N)$. Since this matching is perfect, the conditions on $\operatorname{Coim}\eta$ and $\operatorname{Im}\eta$ are automatically satisfied. It remains to show that if $\eta(\underline{\gamma}_{M'}) = \underline{\gamma}_{N'}$, then $\underline{\gamma}_{M'}$ and $\underline{\gamma}_{N'}$ are $\delta$-equivalent. By construction, if $\eta(\underline{\gamma}_{M'}) = \underline{\gamma}_{N'}$ then there is an edge between $M'$ and $N'$ in the graph $G(M,N)$. Therefore,
    \[  M' \leq \tau^{-\delta}N' \leq \tau^{-2\delta}M' \]
In terms of the partial order on the corresponding arcs, this implies that there exist $n_1, n_2 \in \IZ_{\geq 0}$ such that $\underline{\gamma}_{\tau^{-\delta}N'} = t^{n_1}s^{n_2} \underline{\gamma}_{M'}$ and $m_1, m_2 \in \IZ_{\geq 0}$ such that $\underline{\gamma}_{\tau^{-2\delta}M'} = t^{m_1}s^{m_2} \underline{\gamma}_{\tau^{-\delta}N'}$. By definition, we also have that $\underline{\gamma}_{\tau^{-2\delta}M'} = t^{2\delta}s^{2\delta} \underline{\gamma}_{M'}$. It follows that $n_1+m_1=n_2+m_2=2\delta$, so $n_1,n_2, m_1,m_2 \leq 2\delta$.
For $i\in \{1,2\}$, let $k_i = \min\{n_i, \delta \}$. Then $0 \leq \delta- k_i\leq \delta$ and $0\leq n_i-k_i \leq 2\delta-\delta = \delta$. Since $t^{\delta}s^{\delta}\underline{\gamma}_{N'} = \underline{\gamma}_{\tau^{-\delta}N'} =t^{n_1}s^{n_2} \underline{\gamma}_{M'}$ it follows that 
\[t^{\delta-k_1}s^{\delta-k_2}\underline{\gamma}_{N'} = t^{n_1-k_1}s^{n_2-k_2} \underline{\gamma}_{M'}\]
noting that since the end points on the arcs are on different boundary components, applying the operations $s$ and $t$ never results in a trivial arc. In particular, the operations have well defined inverses. The result follows.
\end{proof}

\begin{proposition} \label{nonregisom}
        Suppose $M,N$ are persistence modules which are both in either the preprojective component or the preinjective component. If $\mathcal{B}(M)$ and $ \mathcal{B}(N)$ are $\delta$-matched, then $M$ and $N$ are $\delta$-interleaved and so \[ d_I(M,N) \leq d_B(\mathcal{B}(M), \mathcal{B}(N)).\]
\end{proposition}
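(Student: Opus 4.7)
The strategy is to use the $\delta$-matching $\eta$ to construct the morphisms witnessing $\delta$-interleaving one matched pair at a time, and then assemble via direct sum. A crucial preliminary observation is that, in the preprojective and preinjective components of type $\tilde{A}$, every indecomposable object corresponds to an arc whose two endpoints lie on the two different boundary components of the annulus. Applying $s$ or $t$ to such an arc merely slides one endpoint around its own boundary and never produces a trivial arc, so every such graded arc has infinite length. Consequently $\mathcal{B}(M)_{2\delta} = \mathcal{B}(M)$ and $\mathcal{B}(N)_{2\delta} = \mathcal{B}(N)$, which together with the conditions on $\operatorname{Coim}\eta$ and $\operatorname{Im}\eta$ force $\eta$ to be a bijection between the indecomposable summands of $M$ and those of $N$.

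Fix a matched pair $(M',N')$ with $\eta(\underline{\gamma}_{M'}) = \underline{\gamma}_{N'}$. From the $\delta$-equivalence we obtain integers $0 \leq m_1, m_2, n_1, n_2 \leq \delta$ satisfying $t^{m_1}s^{m_2}\underline{\gamma}_{M'} = t^{n_1}s^{n_2}\underline{\gamma}_{N'}$. Since both arcs have infinite length, Lemma~\ref{partcommutivity} lets $s$ and $t$ commute freely on them, and on the universal cover of the annulus each of $s,t$ acts invertibly. Combined with the identification (Remark~\ref{remarkends}(7)) of $\tau^{-1}$ with the operation $st$ on arcs, a direct calculation yields that $\underline{\gamma}_{\tau^{-\delta}N'} = t^{p_1}s^{p_2}\underline{\gamma}_{M'}$ and $\underline{\gamma}_{\tau^{-\delta}M'} = t^{q_1}s^{q_2}\underline{\gamma}_{N'}$, where $p_i := \delta + m_i - n_i$ and $q_i := \delta + n_i - m_i = 2\delta - p_i$ all lie in the interval $[0,2\delta]$.

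I would then define $f' \colon M' \to \tau^{-\delta}N'$ to be the composition in the AR quiver of $p_1$ irreducible $t$-morphisms and $p_2$ irreducible $s$-morphisms through the meshes (the order being immaterial up to sign by the mesh relations), and define $g' \colon N' \to \tau^{-\delta}M'$ analogously from $q_1, q_2$. Then $\tau^{-\delta}g' \circ f' \colon M' \to \tau^{-2\delta}M'$ is a mesh composition with $p_1+q_1 = 2\delta$ factors of type $t$ and $p_2+q_2 = 2\delta$ factors of type $s$; by Definition~\ref{phidefn}, $\Phi^{2\delta}_{M'}$ is a mesh composition of exactly the same shape. By standardness of the component, any two mesh compositions between the same pair of indecomposables agree up to a nonzero scalar, so $\tau^{-\delta}g' \circ f'$ equals $\Phi^{2\delta}_{M'}$ up to an element of $\mathbf{k}^*$, and symmetrically for $\tau^{-\delta}f' \circ g'$ and $\Phi^{2\delta}_{N'}$.

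The main obstacle is the scalar bookkeeping: both equalities required by Definition~\ref{deltainterleaved} must hold simultaneously for the same $f'$ and $g'$. Since each $\Phi^{2\delta}$ is itself defined only up to a scalar in $\mathbf{k}^*$, the cleanest approach is to take the specific representatives $\Phi^{2\delta}_{M'} := \tau^{-\delta}g' \circ f'$ and $\Phi^{2\delta}_{N'} := \tau^{-\delta}f' \circ g'$; alternatively, noting that the rescaling $f' \mapsto \lambda f'$, $g' \mapsto \mu g'$ scales both compositions by the same factor $\lambda\mu$, one verifies via Lemma~\ref{commrel} that the two scalars coincide and can therefore be absorbed together. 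Finally, assembling $f = \bigoplus f'$ and $g = \bigoplus g'$ over the finitely many matched pairs, and using that $\Phi^{2\delta}$ is compatible with direct sums by construction, gives the desired $\delta$-interleaving between $M$ and $N$, establishing $d_I(M,N) \leq \delta$ whenever the barcodes are $\delta$-matched.
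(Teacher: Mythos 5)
Your proof is correct and follows essentially the same route as the paper's: both observe that the infinite length of arcs in the non-regular components forces $\eta$ to be a perfect matching, then translate the $\delta$-equivalence of matched arcs into a mesh composition $f_i$ of $s$- and $t$-steps (the paper insists on $s$'s first, you allow any order modulo sign), construct $g_i$ symmetrically, verify $\tau^{-\delta}g_i\circ f_i = \Phi^{2\delta}_{M_i}$ via the mesh relations, and assemble $f,g$ as block-diagonal sums. The one place you go further than the paper is in noticing that both interleaving equations must hold with the same $f',g'$ even though $\Phi$ is defined only up to a scalar; the paper passes over this silently. Your second proposed fix (rescale and use Lemma~\ref{commrel} to see the two scalars coincide) is the right one; your first (``take $\Phi^{2\delta}_{M'} := \tau^{-\delta}g'\circ f'$ as the definition'') should be dropped, since $\Phi^{2\delta}_{M'}$ must be fixed once and for all before any interleaving morphisms are chosen, and it appears in the interleaving condition for every other pair $(M,N)$ containing $M'$ as a summand. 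Finally, the phrase ``any two mesh compositions between the same pair of indecomposables agree up to a nonzero scalar'' is stated more broadly than is true — Hom spaces in these components can have dimension greater than one — but in the case at hand both compositions have the same number of $s$- and $t$-steps, hence the same winding in the cylinder, so they are mesh-equivalent and the conclusion holds.
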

\begin{proof}
     By definition there exists a matching $\eta : \mathcal{B}_M \not\to \mathcal{B}_N$ such that
    \[ (\mathcal{B}_M)_{2\delta} \subseteq \operatorname{Coim}\eta, \qquad (\mathcal{B}_N)_{2\delta} \subseteq \operatorname{Im}\eta. \]
    Since all arcs corresponding to indecomposable objects in any non-regular component have infinite length, it follows that $\eta$ is a perfect matching.
    Let $M_i$ and $N_i$ be summands of $M$ and $N$ respectively such that $\eta(\underline{\gamma}_{M_i}) = \underline{\gamma}_{N_i}$. By assumption $\underline{\gamma}_{M_i}$ and $\underline{\gamma}_{N_i}$ are $\delta$-equivalent, so there exist integers $0 \leq n_1, n_2, m_1, m_2 \leq \delta$ such that ${t}^{m_1} {s}^{m_2}\underline{\gamma}_{M_i} = {t}^{n_1} {s}^{n_2}\underline{\gamma}_{N_i} $ is non-trivial. Using Lemma~\ref{partcommutivity} it follows that \[{t}^{\delta-n_1 +m_1} {s}^{\delta -n_2 +m_2}\underline{\gamma}_{M_i} = {t}^{\delta} {s}^{\delta}\underline{\gamma}_{N_i} = \underline{\gamma}_{\tau^{-\delta}N_i}. \] For each $j = 0, \dots , \delta -n_2 +m_2 $, let $K_j$ be the indecomposable object corresponding to the arc ${s}^{j}\underline{\gamma}_{M_i} $. It follows from Remark~\ref{remarkends} that for each $j = 0, \dots , \delta -n_2 +m_2-1$, then $K_j$ and $K_{j+1}$ are objects in a square of the AR-mesh, and there is a (unique up to scaling) irreducible morphism $ u_j : K_j \to K_{j+1}$. For each $j = \delta -n_2 +m_2+1, \dots , 2\delta -(n_1 +n_2)+(m_1+m_2) $, let $K_j$ be the indecomposable object corresponding to the arc ${t}^{j-(\delta -n_2 +m_2)}{s}^{\delta -n_2 +m_2}\underline{\gamma}_{M_i} $ and for each $j= \delta -n_2 +m_2, \dots , 2\delta -(n_1 +n_2)+(m_1+m_2)-1$ let $v_j : K_j \to K_{j+1}$ be the corresponding nonzero irreducible morphism from the AR-mesh. Consider the morphism \[ f_i = v_{2\delta -(n_1 +n_2)+(m_1+m_2)-1} \circ \cdots \circ v_{\delta -n_2 +m_2} \circ u_{\delta -n_2 +m_2-1} \circ \cdots \circ u_1 \circ u_0: M_i \to \tau^{-\delta}N_i\]
    which is a composition of irreducible morphisms (arrows) in the AR quiver which goes $\delta -n_2 +m_2\geq 0$ steps diagonally up and then $\delta -n_1 +m_1\geq 0$ steps diagonally down.

    By a symmetric argument, we can construct a morphism $g_i$ from $N_i$ to $\tau^{-\delta}M_i$ which goes $\delta -m_2 +n_2\geq 0$ steps diagonally up and then $\delta -m_1 +n_1\geq 0$ steps diagonally down. Then using the mesh relations it follows that $\tau^{-\delta}g_i \circ f_i = \Phi_{M_i}^{2\delta}$, the morphism that goes $2\delta$ steps diagonally up and $2\delta$ steps diagonally down in the AR quiver. We can then define morphisms $f: M \to N$ and $g: N \to M$ as diagonal matrices with the $f_i$ (respectively $g_i$) on the diagonal. It follows that $M$ and $N$ are $\delta$-interleaved.

    \end{proof}
Putting together the two previous results, we have proved the isometry result when restricted to non-regular persistence modules.

\section{Stability for regular persistence modules}\label{StabilityReg}

Now we look at persistence modules whose summands lie in the regular component. The regular component consists of a family of standard stable rank one tubes parametriesed by $\mathbf{k}^*$ together with two standard stable tubes of ranks $p$ and $q$ respectively ($p,q \geq 1$). Because distinct tubes are orthogonal to each other (there are no morphisms between them), the argument from the proof of Lemma~\ref{componentwise} shows that $M,N$ are $\delta$-interleaved if and only if $M_\mathcal{T}$ and $N_\mathcal{T}$ are $\delta$-interleaved for each tube $\mathcal{T}$, where $M_\mathcal{T}$ and $N_\mathcal{T}$ are the summands of $M$ and $N$ respectively that lie in $\mathcal{T}$, or the zero object if there is no summand in $\mathcal{T}$. Therefore, we assume without loss of generality that $M,N$ are $\delta$-interleaved persistence modules whose indecomposable summands all lie in a tube $\mathcal{T}$.

We recall that each tube is a hereditary $\mathbf{k}$-linear abelian category, with quasi-simple objects along the mouth (these are simple in the tube, but may not be simple in the bigger module category). The tubes are uniserial, so every indecomposable object $M$ of the tube has a unique composition series, and a well-defined (composition) length $\ell(M)$. The following lemma justifies the slight abuse of the notation $\ell$.

\begin{lemma}
    For any indecomposable object $M$ in a tube corresponding to an arc $\gamma_M$, the composition length $\ell(M)$ and the length $\ell(\gamma_M)$ coincide.
\end{lemma}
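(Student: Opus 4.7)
The plan is to prove the equality by induction on the composition length $\ell(M)$, exploiting the compatibility between the Auslander-Reiten mesh and the operations $s,t$ on arcs given by Remark~\ref{remarkends}(6) (together with the length-change formula of Lemma~\ref{stmovements}).

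For the base case $\ell(M) = 1$, the module $M$ is a quasi-simple at the mouth of the tube. Since no indecomposable of composition length $0$ exists to fill a would-be second summand on the coray below the mouth, the Auslander-Reiten sequence $0 \to M \to E \to \tau^{-1}M \to 0$ has a single indecomposable middle term $E$ of composition length $2$. By Remark~\ref{remarkends}(6) this translates into the fact that exactly one of $s(\gamma_M),\, t(\gamma_M)$ is trivial, and the orientation convention from Remark~\ref{choice} ensures it is $t(\gamma_M)$. Hence $\ell(\gamma_M) = 1 = \ell(M)$.

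For the inductive step, assume the equality for all indecomposables of length $<k$ in the tube, and let $M$ have $\ell(M) = k \geq 2$. Let $M'$ be the indecomposable of composition length $k-1$ on the same ray as $M$, so that there is an irreducible monomorphism $M' \hookrightarrow M$ realising $M$ as a middle term of the AR mesh at $M'$. By Remark~\ref{remarkends}(6), $\gamma_M \in \{s(\gamma_{M'}),\, t(\gamma_{M'})\}$, and by the inductive hypothesis $\ell(\gamma_{M'}) = k-1$, so Lemma~\ref{stmovements} gives $\ell(s(\gamma_{M'})) = k$ and $\ell(t(\gamma_{M'})) = k-2$.

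If $k = 2$, then $M' = S$ is quasi-simple and the base case forces $t(\gamma_{M'})$ to be trivial, so $\gamma_M = s(\gamma_{M'})$ and $\ell(\gamma_M) = 2$. If $k \geq 3$, the other middle term of the AR mesh at $M'$ is the indecomposable $M''$ with $\ell(M'') = k-2$ (sitting on the coray through $M'$), which by the inductive hypothesis satisfies $\ell(\gamma_{M''}) = k-2$. Since $\{\gamma_M, \gamma_{M''}\} = \{s(\gamma_{M'}),\, t(\gamma_{M'})\}$, the alternative $\gamma_M = t(\gamma_{M'})$ would force $\gamma_{M''} = s(\gamma_{M'})$ and hence $\ell(\gamma_{M''}) = k$, contradicting the inductive hypothesis. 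Therefore $\gamma_M = s(\gamma_{M'})$ and $\ell(\gamma_M) = k = \ell(M)$, completing the induction.

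The principal obstacle is to identify, among the two middle terms of the AR mesh at $M'$, which one corresponds to $s(\gamma_{M'})$ and which to $t(\gamma_{M'})$; this is resolved by combining the length-change formula of Lemma~\ref{stmovements} with the inductive hypothesis applied to the other middle term.
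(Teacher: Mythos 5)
Your proof is correct, and it fills in the paper's inductive step in a slightly different way from what the paper actually does. The paper's argument pins down the base case by identifying $\gamma_S = \sigma_v^{-1}$ explicitly, then handles composition length $2$ via the cone/concatenation description $\gamma_M \simeq \gamma_{S'}\cdot\gamma_S$ coming from the short exact sequence $0 \to S' \to M \to S \to 0$, and finally dismisses the rest as ``a straightforward inductive argument.'' You instead run everything through the AR-mesh correspondence of Remark~\ref{remarkends}(6) and the length bookkeeping of Lemma~\ref{stmovements}: the base case comes from observing that the AR-sequence at the mouth has a single indecomposable middle term (so exactly one of $s(\gamma_S), t(\gamma_S)$ is trivial), and the inductive step comes from comparing the lengths of the two middle terms $s(\gamma_{M'}),\,t(\gamma_{M'})$ against the known length $k-2$ of the coray neighbour $M''$ to pin down $\gamma_M = s(\gamma_{M'})$. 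This has the advantage of being uniform across all $k$ and of not invoking the geometric description of cones, at the cost of needing the uniserial/tube structure of the AR-mesh (which summand has which length) rather slightly more explicitly than the paper does. One small quibble: in the $k\geq 3$ step you deduce the lengths of $s(\gamma_{M'})$ and $t(\gamma_{M'})$ from Lemma~\ref{stmovements} applied to $\gamma_{M'}$ in its standard orientation, then compare with $\ell(\gamma_{M''})$; it's worth saying out loud that $\ell(\cdot)$ is orientation-independent so the comparison is unambiguous even though the labelling $s$ vs.\ $t$ is not. That said, the logic is sound and the case split at $k=2$ is handled correctly where the first part of Lemma~\ref{stmovements} doesn't apply.
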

\begin{proof}
    The arc corresponding to a quasi-simple $S$ is an arc of the form $\gamma_S =\sigma_v^{-1}$ for some vertex $v$ on the boundary. Therefore $t(\gamma_S)$ is trivial and so $\ell(S)=1=\ell(\gamma_S)$. An indecomposable object $M$ with composition length $2$ is the extension of $2$ quasi-simples, so there is a short exact sequence $0 \to S' \to M \to S \to 0$. It follows that $M$ is the cone of the (unique up to scaling) morphism $S[-1] \to S'$ in the derived category and, using the description of cones in the geometric model, that $\gamma_M \simeq \gamma_{S'} \cdot \gamma_{S}$, wher the dot denotes the concatenation of arcs. Therefore $t(\gamma_M)= \gamma_S$ which is non-trivial, but $t^2(\gamma_M)$ is trivial. A straightforward inductive argument then completes the proof.
\end{proof}

In this section we will use the notation $\hom(A,B)$ to denote the dimension of $\Hom(A,B)$.
We can use the uniseriality property of the tubes to calculate the dimension of $\Hom$ spaces. In particular, for any indecomposable object $M$, then $\hom(S,M) = 1$ if $S$ is the quasi-simple socle of $M$, and $\hom(S,M) = 0$ if $S$ is any of the other quasi-simples.

\begin{lemma} \label{lemma-counting-summands}Let $\mathcal{T}$ be a rank $p$ standard stable tube with quasi-simples $S_1, \dots , S_p$, and let $\sigma = \bigoplus_{i=1}^p S_i$. Suppose $M$ is any object in the tube $\mathcal{T}$. Then:
\begin{enumerate}
\item The number of indecomposable summands of $M$ with socle $S$ is equal to $\hom(S, M)$. 
\item The number of indecomposable summands of $M$ with top $S$ is equal to $\hom(M,S)$. 
\item The number of indecomposable summands of $M$ is equal to $\hom(\sigma, M)= \hom(M, \sigma)$.
\end{enumerate}
\end{lemma}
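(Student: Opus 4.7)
The plan is to reduce to the indecomposable case and then exploit uniseriality of the tube, invoking Lemma~\ref{uniserialprops} to determine how many morphisms from (or to) a quasi-simple object an indecomposable can support.

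First I would establish (1) for an indecomposable $M$: for any quasi-simple $S$ I claim that $\hom(S, M) = 1$ if $\operatorname{soc}(M) \cong S$ and $\hom(S, M) = 0$ otherwise. Since $\mathcal{T}$ is a uniserial abelian subcategory, $M$ has a unique composition series and hence a unique quasi-simple subobject, namely $\operatorname{soc}(M)$. A nonzero morphism $f \colon S \to M$ has image a nonzero subobject of $M$ which, being a nonzero quotient of a simple object of $\mathcal{T}$, is itself quasi-simple; by uniqueness $\operatorname{Im}(f) = \operatorname{soc}(M)$ and so $S \cong \operatorname{soc}(M)$. Conversely, when $S \cong \operatorname{soc}(M)$, the inclusion $\operatorname{soc}(M) \hookrightarrow M$ is such a morphism, and any other differs from it by an element of $\End(S) = \mathbf{k}$ (a brick over an algebraically closed field, by Schur). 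Writing $M = \bigoplus_i M_i$ via Krull--Schmidt and using additivity of $\Hom(S, -)$ one gets
\[
\hom(S, M) = \sum_i \hom(S, M_i) = \#\{\, i : \operatorname{soc}(M_i) \cong S \,\},
\]
which is precisely (1). Part (2) is entirely dual: a nonzero morphism $M \to S$ from an indecomposable $M$ must factor through the unique quasi-simple quotient $\operatorname{top}(M)$, so $\hom(M, S) \in \{0, 1\}$ with value $1$ exactly when $\operatorname{top}(M) \cong S$, and the same additivity argument gives the count.

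For (3), every indecomposable summand $M_i$ of $M$ lies in $\mathcal{T}$ and therefore has socle (resp.\ top) isomorphic to one of $S_1, \dots, S_p$. Summing (1) over all quasi-simples,
\[
\hom(\sigma, M) = \sum_{j=1}^p \hom(S_j, M) = \sum_{j=1}^p \#\{\, i : \operatorname{soc}(M_i) \cong S_j \,\} = \#\{\text{indecomposable summands of } M\},
\]
and the analogous sum of (2) gives the same count via tops, yielding $\hom(\sigma, M) = \hom(M, \sigma)$. The only mildly delicate point, and the closest thing to an obstacle, is justifying that in a uniserial indecomposable object the socle (resp.\ top) really is the \emph{unique} quasi-simple subobject (resp.\ quotient); this is immediate from the existence of a unique composition series, so the remainder of the argument is routine additivity.
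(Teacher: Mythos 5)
Your proposal is correct and takes essentially the same route as the paper: reduce to indecomposables via Krull--Schmidt additivity, use uniseriality to see that $\hom(S,M)\in\{0,1\}$ for indecomposable $M$ with value $1$ precisely when $S\cong\operatorname{soc}(M)$ (dually for tops), and sum over the quasi-simples for part (3). The only difference is that the paper states this key fact about $\hom(S,M)$ for indecomposable $M$ in the paragraph preceding the lemma and cites uniseriality, whereas you spell out the short argument (image of a nonzero $S\to M$ is quasi-simple, hence the socle, plus $\End(S)=\mathbf{k}$) inside the proof.
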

\begin{proof}
 For any object $M$ in the tube, we decompose $M =  \bigoplus_{i=1}^a M_i$ into indecomposable objects. Then
 \[ \hom(s, M) =  \sum_{j=1}^a \hom(s, M_j) \]
 counts the number of indecomposable summands which have socle $s$. The argument for the summands with top $s$ is dual. For the total number of indecomposable summands, 
\[ \hom(\sigma, M) =  \sum_{j=1}^a\sum_{i=1}^p \hom( s_i, M_j) = \sum_{j=1}^a 1 =a.\]
\end{proof}

\subsection{Canonical Injections} \label{caninj}
Following \cite{BauerLesnick} we consider canonical injections as follows. \\
Suppose $M,N$ are persistence modules and $\hom(S, M) \leq \hom(S, N)$ for each quasi-simple $S$. We order the indecomposable summands with socle $S$ by length, so \[ \ell(M_1) \geq \ell(M_2)\geq \dots \qquad \text{ and  } \qquad\ell(N_1) \geq \ell(N_2)\geq \dots\] Then there exist canonical injections 
\[ \{ \text{summands of $M$ with socle $S$}\} \hookrightarrow \{ \text{ summands of $N$ with socle $S$}\}\]
where each indecomposable summand $M_i$ maps to $N_i$. This induces an injection for all indecomposable summands,
\[ \{ \text{indecomposable summands of $M$ }\} \hookrightarrow \{ \text{indecomposable summands of $N$}\},\] which maps the $i$th longest summand of $M$ with socle $S$ to the $i$th longest summand of $N$ with socle $S$, for each quasi-simple $S$ and each $1 \leq i \leq \hom(S, M)$.

If $M,N$ are persistence modules and $\hom(M, S) \geq \hom(N, S)$ for each quasi-simple $S$, then we can define canonical injections dually. 

\begin{theorem}[Structure theorem for regular modules] \label{StructureThm} Let $M$ and $N$ be objects in a tube $\mathcal{T}$:
\begin{enumerate} 
\item If $M \hookrightarrow N$ is an injective morphism in $\mathcal{T}$, then for each quasi-simple $S \in \mathcal{T}$,
 \[ \hom(S, M) \leq \hom(S, N).\]
 If $M_i$ is an indecomposable summand of $M$ such that $\operatorname{soc}(M_i)=S$, then
 the canonical injection maps $M_i$ to a summand $N_j$ of $N$,  such that $\operatorname{soc}(N_j)=S$ and $\ell(M_i) \leq \ell(N_j)$.  
\item If $M \twoheadrightarrow N$ is an surjective morphism in $\mathcal{T}$, then for each quasi-simple $S \in \mathcal{T}$,
 \[  \hom(M,S) \geq \hom(N,S). \] 
 If $N_i$ is an indecomposable summand of $N$ such that $\operatorname{top}(N_i)=S$, then
 the canonical injection maps $N_i$ to a summand $M_j$, such that $\operatorname{top}(N_j)=S$ and $\ell(N_i) \leq \ell(M_j)$.  
\end{enumerate}
\end{theorem}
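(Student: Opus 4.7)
The plan is to prove the two parts by dual arguments, each reducing to an exactness property of iterated socle (respectively radical) functors on $\mathcal{T}$. The tube $\mathcal{T}$ is uniserial, so an indecomposable object is determined up to isomorphism by its socle and length (equivalently, by its top and length); for $M\in\mathcal{T}$ and $S$ quasi-simple write $a_k(S,M)$ for the number of indecomposable summands $M_i$ of $M$ with $\operatorname{soc}(M_i)=S$ and $\ell(M_i)\geq k$, and define $b_k(S,M)$ analogously for tops. Ordering summands with a fixed socle (or top) by decreasing length, the conclusion of Part~(1) is equivalent to the inequality $a_k(S,M)\leq a_k(S,N)$ holding for all $k\geq 1$ and all quasi-simples $S$, and Part~(2) is equivalent to the dual inequality $b_k(S,N)\leq b_k(S,M)$.

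For Part~(1), the inequality $\hom(S,M)\leq \hom(S,N)$ is immediate from the left-exactness of $\Hom(S,-)$. For the length inequality, I pass to the iterated socle filtration $\operatorname{soc}_{k-1}$. Using uniseriality, each summand $M_j$ of $M$ with $\ell(M_j)\geq k$ contributes to $M/\operatorname{soc}_{k-1}(M)$ an indecomposable quotient of length $\ell(M_j)-k+1$ whose socle is the $k$-th quasi-simple in the composition series of $M_j$ counted from the bottom, namely $\tau^{-(k-1)}\operatorname{soc}(M_j)$; summands of length $<k$ contribute nothing. Therefore the semisimple object $\operatorname{soc}(M/\operatorname{soc}_{k-1}(M))$ contains $\tau^{-(k-1)}S$ with multiplicity exactly $a_k(S,M)$. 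A short induction on $k$, using the standard identity $\operatorname{soc}(A) = A\cap\operatorname{soc}(B)$ for any inclusion $A\hookrightarrow B$, shows that $\operatorname{soc}_{k-1}$ is a left-exact subfunctor of the identity. Thus $M\hookrightarrow N$ descends to an inclusion $M/\operatorname{soc}_{k-1}(M)\hookrightarrow N/\operatorname{soc}_{k-1}(N)$, which in turn induces an inclusion of their semisimple socles; comparing multiplicities of $\tau^{-(k-1)}S$ on the two sides yields the desired inequality.

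Part~(2) is the formal dual: $\hom(M,S)\geq \hom(N,S)$ follows from left-exactness of the contravariant functor $\Hom(-,S)$, and the surjection $M\twoheadrightarrow N$ induces surjections $\operatorname{rad}^{k-1}(M)\twoheadrightarrow\operatorname{rad}^{k-1}(N)$ and hence $\operatorname{top}(\operatorname{rad}^{k-1}(M))\twoheadrightarrow\operatorname{top}(\operatorname{rad}^{k-1}(N))$, where by the analogous uniserial bookkeeping the multiplicity of $S$ in the latter semisimple object counts indecomposable summands of $M$ with top $\tau^{k-1}S$ and length $\geq k$. The main obstacle is precisely the middle step of identifying $a_k(S,M)$ (resp.~$b_k(S,M)$) as the multiplicity of a specific quasi-simple in a semisimple object, which requires being careful about the $\tau$-shift along a ray (resp.~coray) of the tube. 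Once this identification is set up, both halves of the theorem reduce to the left-exactness (respectively right-exactness) of a familiar filtration functor.
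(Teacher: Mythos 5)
Your proposal is correct in essentials and takes a genuinely different route from the paper's proof. Where you count $a_k(S,M)$ by iterating the socle filtration and comparing the semisimple quotients $\operatorname{soc}\bigl(M/\operatorname{soc}_{k-1}(M)\bigr)$ along the inclusion $M\hookrightarrow N$, the paper instead introduces the length-$k$ object $a_k$ on the ray from $S$ with its unique nonzero map $r: S\to a_k$, observes that the image $r^*\bigl(\Hom(a_k,x)\bigr)\subseteq\Hom(S,x)$ is one-dimensional precisely when $x$ is indecomposable with socle $S$ and $\ell(x)\geq k$, and then compares $r^*\bigl(\Hom(a_k,M)\bigr)\hookrightarrow r^*\bigl(\Hom(a_k,N)\bigr)$ via a single commutative square. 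Both arguments reduce the theorem to the inequality ``number of summands with socle $S$ and length $\geq k$ is monotone under inclusions,'' so the content is the same; the paper's representable-functor argument gets the injectivity in one step from left-exactness of $\Hom(a_k,-)$ and $\Hom(S,-)$ together, whereas yours goes through the intermediate claim that $M\hookrightarrow N$ descends to an inclusion $M/\operatorname{soc}_{k-1}(M)\hookrightarrow N/\operatorname{soc}_{k-1}(N)$. That step is correct (it follows inductively from $\operatorname{soc}(A)=A\cap\operatorname{soc}(B)$ that $\operatorname{soc}_{k-1}(M)=M\cap\operatorname{soc}_{k-1}(N)$, which is exactly what is needed to descend the inclusion) but requires a couple of extra lines of lattice-theoretic care, and this is the point you yourself flagged as the main obstacle.

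Two details to tidy up. First, you should say explicitly that $\operatorname{soc}$, $\operatorname{rad}$ and ``semisimple'' are all meant internally to the tube $\mathcal{T}$, using the quasi-simples as simple objects; the quasi-simples are generally not simple in $\mod{\mathbf{k}Q}$, so the ambient socle is a different thing. Second, there is a sign slip in the $\tau$-bookkeeping in Part~(2): since $\operatorname{top}\bigl(\operatorname{rad}^{k-1}(M_j)\bigr)=\tau^{k-1}\operatorname{top}(M_j)$ along a coray, the multiplicity of $S$ in $\operatorname{top}\bigl(\operatorname{rad}^{k-1}(M)\bigr)$ counts summands with top $\tau^{-(k-1)}S$, not $\tau^{k-1}S$. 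This doesn't affect the structure of the argument, but it should be fixed for correctness.
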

\begin{proof}
 We prove the first statement, the other one is dual. By Lemma~\ref{lemma-counting-summands}, the number of indecomposable summands of $M$ with socle $S$ is equal to $\hom(S, M)$. Since $\Hom(S,-)$ is a left exact functor and $M \hookrightarrow N$ is an injection, it follows that $\Hom(S,M) \hookrightarrow \Hom(S,N)$ is also an injection and so $\hom(S,M) \leq \hom(S,N)$.

 Now let $a_k \in \mathcal{T}$ be the indecomposable object with socle $S$ and length $k$ and let $r \in \Hom(S,a_k)$ be the unique non-zero morphism (up to scaling). An indecomposable object $x$ with socle $S$ has length $\ell(x) \geq k$ precisely when the (unique up to scaling) non-zero morphism $\phi \in \Hom(S,x)$ factors through the object $a_k$, so $\phi= \tilde{\phi} \circ r = r^* (\tilde{\phi})$ for some $\tilde{\phi} \in \Hom(a_k,x)$. In particular, the dimension of \[ r^*(\Hom(a_k,x)) \subseteq \Hom(S,x) \] is one when $x$ has socle $S$ and length $\ell(x) \geq k$, and is zero otherwise.

  Therefore, if we decompose $M =  \bigoplus_{i=1}^a M_i$ into indecomposable summands, the dimension of $r^*(\Hom(a_k, M))$ counts the number of indecomposable summands that have a socle $S$ and length $\ell(x) \geq k$.
 Since $\Hom(S,-)$ is a left exact functor and $\iota \colon M \hookrightarrow N$ is an injection, there is a commutative diagram with exact rows:

\begin{center}
  \begin{tikzcd} 0 \arrow[r,] &\Hom(a_k,M)\arrow[r,"\iota_*"]\arrow[d,"r^*"] &\Hom(a_k,N)\arrow[d,"r^*"]\\ 
  0 \arrow[r,] &\Hom(S,M)\arrow[r,"\iota_*"] &\Hom(S,N) \end{tikzcd}  
\end{center}
which induces an injective map $r^*(\Hom(a_k, M)) \hookrightarrow r^*(\Hom(a_k, N))$. Therefore, for any $k\geq 0$, there are at least as many summands of $N$ with socle $S$ of length $\geq k$, as there are summands of $M$ with socle $S$ of length $\geq k$. It follows that the canonical injection maps any summand of $M$ length $k$ to a summand of $N$ of length at least $k$. 
 \end{proof}

\subsection{The induced matching associated to a morphism}
Following the general structure of the argument in \cite{BauerLesnick}, we show that given any morphism $f:M \to N$ of modules where all summands are regular, there is an induced matching $\eta_f \colon \mathcal{B}(M) \not\to \mathcal{B}(N)$. First, we factorise $f: M \twoheadrightarrow \operatorname{Im}f \hookrightarrow N$ as a surjection followed by an injection.
Using Theorem~\ref{StructureThm}, we see that these two maps induce two canonical injections which, in turn, induce the matching $\eta_f$
\begin{center} 
\begin{tikzcd} & \mathcal{B}({\operatorname{Im}f})\arrow[dl, hook]\arrow[dr, hook]&\\ \mathcal{B}(M)\arrow[rr,"/"{anchor=center,sloped}, "\eta_f"]& &\mathcal{B}(N). \end{tikzcd}
\end{center}
Since regular modules are neither injective nor projective, the AR-translate $\tau$ has a well defined inverse and induces a perfect matching $\theta_\delta : \mathcal{B}(N) \to \mathcal{B}(\tau^{\delta}N)$.

\begin{theorem}
    Suppose $M$ and $N$ are $\delta$-interleaved persistence modules with respect to the morphisms \[ f: M \to \tau^{-\delta}N \text{ and } g: N \to \tau^{-\delta}M. \]
    Then $\theta_\delta \circ\eta_f:\mathcal{B}_{M} \to \mathcal{B}_{N}$ is a $\delta$-matching and so \[ d_I(M,N) \geq d_B(\mathcal{B}_{M}, \mathcal{B}_{N}).\]
\end{theorem}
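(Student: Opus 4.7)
The plan is to verify that $\eta := \theta_\delta \circ \eta_f$ satisfies the three defining conditions in Definition~\ref{matching}: coverage of $\mathcal{B}(M)_{2\delta}$ by $\operatorname{Coim}(\eta)$, coverage of $\mathcal{B}(N)_{2\delta}$ by $\operatorname{Im}(\eta)$, and $\delta$-equivalence of matched arcs. To set up notation, factor $f = \beta \circ \alpha$ with $\alpha : M \twoheadrightarrow L$ a surjection onto $L = \operatorname{Im}(f)$ and $\beta : L \hookrightarrow \tau^{-\delta}N$ the corresponding inclusion. By construction, $\eta_f$ equals $c_\beta \circ c_\alpha^{-1}$ on $\operatorname{Im}(c_\alpha)$, where $c_\alpha : \mathcal{B}(L) \hookrightarrow \mathcal{B}(M)$ is the canonical injection matched by tops (from the surjection $\alpha$, via Theorem~\ref{StructureThm}(2)) and $c_\beta : \mathcal{B}(L) \hookrightarrow \mathcal{B}(\tau^{-\delta}N)$ is the canonical injection matched by socles (from the injection $\beta$, via Theorem~\ref{StructureThm}(1)); the composition with $\theta_\delta$ merely identifies each summand of $\tau^{-\delta}N$ with its $\tau^\delta$-image in $N$.

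The main technical ingredient, which I would prove by a direct mesh computation in the uniserial tube, is that $\Phi_{M_i}^{2\delta}$ is nonzero precisely when $\ell(M_i) > 2\delta$; in that case its image is the unique length-$(\ell(M_i) - 2\delta)$ subobject of $\tau^{-2\delta}M_i$, which has top $\operatorname{top}(M_i)$ and socle $\tau^{-2\delta}\operatorname{soc}(M_i)$. Since $\tau^{-\delta}g \circ f = \Phi_M^{2\delta}$, the morphism $\tau^{-\delta}g$ restricted to $L$ surjects onto $\operatorname{Im}(\Phi_M^{2\delta})$, so by Theorem~\ref{StructureThm}(2), for every quasi-simple $T$,
\[ \#\{L_k \subseteq L : \operatorname{top}(L_k) = T\} \;\geq\; \#\{M_i \subseteq M : \operatorname{top}(M_i) = T,\; \ell(M_i) > 2\delta\}, \]
together with a length control $\ell(L_k) \geq \ell(M_i) - 2\delta$ on matched summands. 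This places every such $M_i$ in $\operatorname{Im}(c_\alpha) = \operatorname{Coim}(\eta)$, giving the first coverage condition. The coverage of $\mathcal{B}(N)_{2\delta}$ by $\operatorname{Im}(\eta)$ is the symmetric statement obtained by applying the same argument to $g$ and the identity $\tau^{-\delta}f \circ g = \Phi_N^{2\delta}$, combined with a compatibility check between the two induced matchings.

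For the $\delta$-equivalence of matched arcs, suppose $\eta$ matches $M_i$ to $N_j$, witnessed by some $L_k \subseteq L$ with $\operatorname{top}(L_k) = \operatorname{top}(M_i)$ and $\operatorname{soc}(L_k) = \operatorname{soc}(\tau^{-\delta}N_j)$ and with $\ell(L_k)$ controlled as above. Translating via the dictionary between socles and tops of indecomposables and the endpoints of their arcs in the geometric model, and using the orientation convention of Remark~\ref{choice}, the shared top and socle give non-negative integers $a_1, a_2, b_1, b_2$ with
\[ t^{a_1} s^{a_2} \underline{\gamma}_{M_i} \;=\; \underline{\gamma}_{L_k} \;=\; t^{b_1} s^{b_2} \underline{\gamma}_{\tau^{-\delta}N_j}. \]
Since $\theta_\delta$ identifies $\underline{\gamma}_{\tau^{-\delta}N_j}$ with $t^{\delta} s^{\delta} \underline{\gamma}_{N_j}$, after absorbing this shift the length bounds force each of the resulting exponents to lie in $[0, \delta]$, producing the required $\delta$-equivalence of $\underline{\gamma}_{M_i}$ and $\underline{\gamma}_{N_j}$. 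The main obstacle I anticipate is this length-tracking step: ensuring that the bounds on $\ell(L_k)$ obtained from the two canonical injections combine to give all four exponents in $[0, \delta]$ requires careful bookkeeping, especially in a low-rank tube where $\tau$ acts with short period on the mouth and the $s, t$ operations interact with the cyclic identification. I expect to handle this by lifting the arcs to the universal cover of the annulus as in the proof of Lemma~\ref{stmovements}.
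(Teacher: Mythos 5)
Your proof is correct, and the key step is genuinely different from the paper's. Both arguments begin identically: factor $f=\beta\circ\alpha$ through $L=\operatorname{Im}f$, observe via the commutative diagram that the map $\tau^{-\delta}g|_L:L\to\operatorname{Im}\Phi^{2\delta}_M$ is surjective, and use the resulting inequality on numbers of summands with a given top (the paper phrases this via Serre duality and $\Hom(S[-1],-)$, which is exactly the cardinality part of Theorem~\ref{StructureThm}(2)) to get $\mathcal{B}(M)_{2\delta}\subseteq\operatorname{Coim}\eta_f$. Where you diverge is in establishing the crucial length bound $\ell(M_i)-\ell(L_k)\leq 2\delta$ for a matched pair. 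The paper proves this by hand: it introduces the indices $w_n$, proves $w_n\geq n$ by an inductive dimension-count, and then uses uniseriality to show that $C_{w_i}$ sits between $M_i$ and $\operatorname{Im}\Phi^{2\delta}_{M_i}$ on the coray. You instead invoke the length-control clause of Theorem~\ref{StructureThm}(2) directly on the surjection $L\twoheadrightarrow\operatorname{Im}\Phi^{2\delta}_M$, compose the resulting canonical injection (by tops) with $c_\alpha^{-1}$, and read off $\ell(M_i)-2\delta\leq\ell(L_k)\leq\ell(M_i)$; this is cleaner because it re-uses the structure theorem the paper has already proved, rather than re-deriving its conclusion in the specific situation. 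The compatibility check you flag is real but benign: both canonical injections are by tops and hence send the $i$-th longest summand with a fixed top to the $i$-th longest, so they compose to name the same $L_k$; the dual chain (by socles, via the injection $\operatorname{Im}\Phi^{2\delta}_N\hookrightarrow\tau^{-\delta}L$ and $c_\beta$) likewise names the same $L_k$ after applying $\tau^{\delta}$. One small wording issue: the coverage $\mathcal{B}(M)_{2\delta}\subseteq\operatorname{Im}(c_\alpha)$ follows from the count of summands per top, not from the length control, which you need only for $\delta$-equivalence; and for $\delta$-equivalence you should also note the easy case $\ell(M_i)\leq 2\delta$ (where $a\leq 2\delta$ holds trivially), which the paper handles separately.
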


\begin{proof}
The composition $\Phi^{2 \delta}_M = \tau^{-\delta}g\circ f$ induces a commutative diagram with exact rows:
\begin{equation}\label{diagram}  \begin{tikzcd} 0 \arrow[r,] &\operatorname{Ker}{f} \arrow[r,""]\arrow[d,""] & M \arrow[d,"="]\arrow[r,"q"]&\operatorname{Im}{f} \arrow[d,"\psi"] \arrow[r,""]& 0\\ 
  0 \arrow[r,] &\operatorname{Ker}{\Phi^{2 \delta}_M}\arrow[r,""] &M \arrow[r,"\overline{q}"] &\operatorname{Im}\Phi^{2 \delta}_M \arrow[r,""]& 0\end{tikzcd}  
\end{equation}
Since $\Phi^{2 \delta}_M$ is by definition diagonal, the lower short exact sequence splits into the direct sum of sequences of the form:
\begin{center}
  \begin{tikzcd}  0 \arrow[r,] &\operatorname{Ker}{\Phi^{2 \delta}_{M_i}}\arrow[r,""] &M_i \arrow[r,"\overline{q}_i"] &\operatorname{Im}\Phi^{2 \delta}_{M_i} \arrow[r,""]& 0\end{tikzcd}  
\end{center}
for each indecomposable summand $M_i$ of $M$. An explicit calculation shows that if $\ell(M_i) \leq 2 \delta$, then $\operatorname{Im}\Phi^{2 \delta}_{M_i}=0$. If $\ell(M_i) > 2 \delta$ then the module $\operatorname{Im}\Phi^{2 \delta}_{M_i}$ is the indecomposable object on the same coray as $M_i$ such that $\ell(\operatorname{Im}\Phi^{2 \delta}_{M_i}) = \ell(M_i)-2 \delta$, and the morphism $\overline{q}_i$ factors down the coray between $M_i$ and $\operatorname{Im}\Phi^{2 \delta}_{M_i}$. In particular, the number of indecomposable summands of $\operatorname{Im}\Phi^{2 \delta}_{M}$ is the same as the number of indecomposable summands of $M$ which have length greater than $2 \delta$.

Let $\mathsf{C}$ denote the coray ending at a quasi-simple $S'$ and let $S=\tau^{-1}S'$. For any module $X$, considered as a complex concentrated at degree zero in the derived category, Serre duality implies $\hom(X,S') = \hom(S[-1],X)$. In particular, $\hom(S[-1],X)$ also counts the number of indecomposable summands of $X$ on $\mathsf{C} $. Applying $\Hom(S[-1],-)$ to the triangles induced by the commutative diagram~(\ref{diagram}), and using the fact that the module category is hereditary, we obtain the following commutative square in which all maps are surjective. 
\begin{center}
  \begin{tikzcd} \Hom{(S[-1],M)} \arrow[r,"q_*"]\arrow[d,"="] & \Hom{(S[-1],\operatorname{Im}{f})} \arrow[d,"\psi_*"]\\ 
 \Hom{(S[-1],M)}\arrow[r,""] &\Hom{(S[-1],\operatorname{Im}{\Phi^{2 \delta}_{M}})} \end{tikzcd}  
\end{center}
Considering the dimensions of these spaces we can conclude that the number of indecomposable summands of $\operatorname{Im}{f}$ on $\mathsf{C}$ is greater than or equal to the number of indecomposable summands of $M$ on $\mathsf{C}$ of length $> 2\delta$. Therefore any summand of $M$ which is not matched to a summand of $\operatorname{Im}{f}$ under the induced matching must be $2 \delta$-trivial. In other words, $ (\mathcal{B}_M)_{2\delta} \subseteq \operatorname{Coim}\eta_f$. A dual argument shows that $(\mathcal{B}({\tau^{-\delta}N}))_{2\delta} \subseteq \operatorname{Im}\eta_f$, so $(\mathcal{B}({N}))_{2\delta} \subseteq \operatorname{Im}(\theta_\delta \circ\eta_f)$.

It remains to prove that if $M_i$ and $N_i$ are summands of $M$ and $N$ respectively, such that $\eta_f(\gamma_{M_i}) = \gamma_{\tau^{-\delta}N_i}$, then $\gamma_{M_i}$ and $ \gamma_{N_i}$ are $\delta$-equivalent. Suppose $M_0$ is a $2 \delta$-trivial indecomposable summand of $M$ which is in $\operatorname{Coim}\eta_f$. Let $C_0$ denote the summand of $\operatorname{Im}f$ to which $M_0$ is matched. It follows from Theorem~\ref{StructureThm} that $0<\ell(C_0) \leq \ell(M_0) \leq 2 \delta$ so $\underline{\gamma}_{C_0} = t^{k} \underline{\gamma}_{M_0}$ for $k = \ell(M_0) -\ell(C_0) \leq 2\delta$. 

Now consider the indecomposable summands $M_i$ of $M$ on $\mathsf{C}$ which have length $\ell(M_i)>2 \delta$. We order them so $\ell(M_1) \geq \ell(M_2)\geq \dots \geq \ell(M_m)>2\delta$. Similarly, we order the indecomposable summands $C_i$ of $\operatorname{Im}{f}$ so that $\ell(C_1) \geq \ell(C_2)\geq \dots \geq \ell(C_k)$. We note that $m \leq k$ and by definition $M_i$ is matched with $C_i$ for each $i=1, \dots m$. We say that the composition $\overline{q_i}: M_i \hookrightarrow M {\to} \operatorname{Im}\Phi^{2 \delta}_M {\twoheadrightarrow} \operatorname{Im}\Phi^{2 \delta}_{M_i}$ maps non-trivially via  some $C_j$ if the composition
\[M_i \hookrightarrow M \stackrel{q}{\to} \operatorname{Im}f \twoheadrightarrow C_j \hookrightarrow \operatorname{Im}f  \stackrel{\psi}{\to} \operatorname{Im}\Phi^{2 \delta}_M {\twoheadrightarrow} \operatorname{Im}\Phi^{2 \delta}_{M_i} \] is non-zero, where the monomorphisms are inclusions of summands and the epimorphisms are projections onto summands. If this holds then, since $\overline{q_i}$ factors along the coray $\mathsf{C}$, it follows that $C_j$ lies between $M_i$ and $\operatorname{Im}\Phi^{2 \delta}_{M_i}$ on $\mathsf{C}$. Therefore $\ell(M_i)-\ell(C_j) \leq 2\delta$ and $\underline{\gamma}_{C_j} = t^{k} \underline{\gamma}_{M_i}$ for $k = \ell(M_i) -\ell(C_j) \leq 2\delta$. 
We extend this idea. For any $n \in \{1, \dots , m\}$, we define \[ w_n = \max\{ j \in \{1, \dots , k \} \mid \bigoplus_{i=1}^n M_i \hookrightarrow M \stackrel{\overline{q}}{\to} \operatorname{Im}\Phi^{2 \delta}_M \twoheadrightarrow \bigoplus_{i=1}^n \operatorname{Im}\Phi^{2 \delta}_{M_i} \text{  maps non-trivially via }  C_j\}.\]
We claim this is well defined and that $w_n \geq n$ for all $n \in \{1, \dots , m\}$. We prove this by induction. It follows from diagram~(\ref{diagram}) that $\overline{q_1}$ factors via $\operatorname{Im}{f}$ and therefore maps via at least one of its summands. Therefore $w_1$ is well defined and $w_1 \geq 1$. Now suppose that $w_n \geq n$ for some $n \in \{1, \dots , m-1\}$. It is clear from the definition that $w_{n+1}\geq w_n \geq n$. It is therefore sufficient to show that $w_{n+1} \neq n$. Let $M'= \bigoplus_{i=1}^{n+1} M_i$. If $w_{n+1} =n$ this would imply that the map $M' \hookrightarrow M \to \operatorname{Im}\Phi^{2 \delta}_M \twoheadrightarrow \operatorname{Im}\Phi^{2 \delta}_{M'}$ factors via $C'= \bigoplus_{i=1}^{n} C_i$. It follows that the map $C' \to \operatorname{Im}\Phi^{2 \delta}_{M'}$ is an epimorphism and therefore, using the hereditary property, there is a surjective map 
\[\Hom{(s[-1],C')} \twoheadrightarrow \Hom{(s[-1],\operatorname{Im}{\Phi^{2 \delta}_{M'}})}.\]
Comparing the dimensions of these two spaces, this would imply that $n>n+1$ which would be a contradiction.

Given any $M_i$, then by construction $\ell(M_i) \geq \ell(C_{k_i}) \geq \ell(\operatorname{Im}{\Phi^{2 \delta}_{M_i}})$. Since $w_i \geq i$ then $\ell(C_{w_i}) \leq \ell(C_i)$ and it follows from Theorem~\ref{StructureThm} that $\ell(M_i) \geq \ell(C_{i})$. Therefore, $C_i$ lies between $M_i$ and $\operatorname{Im}\Phi^{2 \delta}_{M_i}$ on $\mathsf{C}$, so $\underline{\gamma}_{C_i} = t^{k} \underline{\gamma}_{M_i}$ for $k = \ell(M_i) -\ell(C_i) \leq 2\delta$.

A dual argument shows that for any summand $C_i$ of $\operatorname{Im}f$ then $\underline{\gamma}_{\tau^{-\delta}N_i} = s^{k} \underline{\gamma}_{C_i}$ for $k = \ell(N_i) -\ell(C_i) \leq 2\delta$. Putting these parts together we see that $\underline{\gamma}_{\tau^{-\delta}N_i} = s^{k_2} \underline{\gamma}_{C_i} = s^{k_2} t^{k_1} \underline{\gamma}_{M_i} =  t^{k_1}s^{k_2} \underline{\gamma}_{M_i}$ for $0 \leq k_1, k_2  \leq 2\delta$.
For $j\in \{1,2\}$, let $n_j = \min \{ k_j, \delta \}$. Then $0 \leq \delta-n_j, k_j-n_j \leq \delta$ and $ t^{\delta -n_1} s^{\delta-n_2} \underline{\gamma}_{N_i} \simeq t^{k_1-n_1}s^{k_2-n_2} \underline{\gamma}_{M_i}$ is nontrivial since $k_1< \ell(M_i)$.

\end{proof}

\begin{proposition}
        Suppose $M,N$ are persistence modules which are both in the same regular component. If $\mathcal{B}(M)$ and $ \mathcal{B}(N)$ are $\delta$-matched, then $M$ and $N$ are $\delta$-interleaved and so \[ d_I(M,N) \leq d_B(\mathcal{B}(M), \mathcal{B}(N)).\]
\end{proposition}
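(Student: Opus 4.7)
The plan is to mimic the construction from Proposition~\ref{nonregisom}, producing explicit morphisms $f: M \to \tau^{-\delta}N$ and $g: N \to \tau^{-\delta}M$ that witness the $\delta$-interleaving. The adaptation to the tube setting requires two modifications: the matching $\eta: \mathcal{B}(M) \not\to \mathcal{B}(N)$ provided by the hypothesis need not be a bijection, and the operations $s$ and $t$ no longer have well-defined inverses on arcs of finite length. I would treat the matched and unmatched summands separately, and assemble the result as block morphisms between the Krull--Schmidt decompositions of $M$ and $N$.

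For each pair of matched summands $(M_i, N_i)$ with $\eta(\underline{\gamma}_{M_i}) = \underline{\gamma}_{N_i}$, the $\delta$-equivalence provides integers $0 \leq n_1, n_2, m_1, m_2 \leq \delta$ and a non-trivial common arc $t^{m_1}s^{m_2}\underline{\gamma}_{M_i} = t^{n_1}s^{n_2}\underline{\gamma}_{N_i}$. Non-triviality of the intermediate arcs means Lemma~\ref{partcommutivity} applies, and the same rearrangement used in Proposition~\ref{nonregisom} produces exponents $0 \leq a_1, a_2 \leq 2\delta$ with $\underline{\gamma}_{\tau^{-\delta}N_i} = t^{a_1}s^{a_2}\underline{\gamma}_{M_i}$. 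Composing the irreducible mesh morphisms along the corresponding ray-then-coray path in the tube yields $f_i: M_i \to \tau^{-\delta}N_i$, and the symmetric construction yields $g_i: N_i \to \tau^{-\delta}M_i$. The mesh relations then force $\tau^{-\delta}g_i \circ f_i = \Phi^{2\delta}_{M_i}$ and $\tau^{-\delta}f_i \circ g_i = \Phi^{2\delta}_{N_i}$, exactly as in the non-regular case.

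For the summands not covered by $\eta$, the matching condition forces $\ell(X) \leq 2\delta$ for every unmatched summand $X$ of $M$ or $N$. An explicit AR-quiver calculation, already carried out in the proof of the previous theorem, shows that $\Phi^{2\delta}_X = 0$ whenever $\ell(X) \leq 2\delta$, since it factors through an object on the coray of $X$ of length $\ell(X) - 2\delta$. I therefore define the corresponding blocks of $f$ and $g$ to be zero; the identities $\tau^{-\delta}g \circ f = \Phi^{2\delta}_M$ and $\tau^{-\delta}f \circ g = \Phi^{2\delta}_N$ remain valid on these blocks because both sides vanish simultaneously.

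The hard part will be the careful bookkeeping of the $s, t$ operations in the tube, where they commute only conditionally (Lemma~\ref{partcommutivity} requires intermediate arcs to stay non-trivial). Specifically, to apply the rearrangement from Proposition~\ref{nonregisom} one must choose a factoring that keeps all intermediate arcs non-trivial. Since the only given constraints are that the terminal arc $t^{m_1}s^{m_2}\underline{\gamma}_{M_i}$ is non-trivial and that $\underline{\gamma}_{\tau^{-\delta}N_i}$ is non-trivial, I expect that there is always a legal ordering, for example by applying all necessary $s$ operations first and only then the $t$ operations; verifying this case by case, together with checking that the resulting composition is independent of the chosen path up to scalars (so that the mesh-relation identification with $\Phi^{2\delta}$ is unambiguous), is the most delicate step of the proof.
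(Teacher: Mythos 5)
Your proposal follows essentially the same route as the paper's proof: construct $f_i, g_i$ for matched summands as ray/coray compositions realizing the $s,t$-exponents coming from $\delta$-equivalence, verify $\tau^{-\delta}g_i \circ f_i = \Phi^{2\delta}_{M_i}$ via the mesh relations, and extend by zero blocks on unmatched summands (where $\ell \leq 2\delta$ forces $\Phi^{2\delta} = 0$). The commutativity concern you flag at the end is resolved precisely by Lemma~\ref{partcommutivity} in combination with Remark~\ref{choice} --- applying $s$ first can never trivialise an arc, and the assumed non-triviality of the terminal arc forces every intermediate $t$-application to remain non-trivial --- and the path-independence of the resulting morphism up to scalar is a standard consequence of the mesh relations, which suffices because $\Phi$ itself is only defined up to a nonzero scalar.
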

\begin{proof}
     By definition there exists a matching $\eta : \mathcal{B}_M \not\to \mathcal{B}_N$ such that
    \[ (\mathcal{B}_M)_{2\delta} \subseteq \operatorname{Coim}\eta, \qquad (\mathcal{B}_N)_{2\delta} \subseteq \operatorname{Im}\eta. \]
    Let $M_i$ and $N_i$ be summands of $M$ and $N$ respectively such that $\eta(\underline{\gamma}_{M_i}) = \underline{\gamma}_{N_i}$. By assumption $\underline{\gamma}_{M_i}$ and $\underline{\gamma}_{N_i}$ are $\delta$-equivalent, so there exist integers $0 \leq n_1, n_2, m_1, m_2 \leq \delta$ such that ${t}^{m_1} {s}^{m_2}\underline{\gamma}_{M_i} = {t}^{n_1} {s}^{n_2}\underline{\gamma}_{N_i} $ is non-trivial. Using Lemma~\ref{partcommutivity} it follows that \[{t}^{\delta-n_1 +m_1} {s}^{\delta -n_2 +m_2}\underline{\gamma}_{M_i} = {t}^{\delta} {s}^{\delta}\underline{\gamma}_{N_i} = \underline{\gamma}_{\tau^{-\delta}N_i}. \] 
    As in the proof of Proposition~\ref{nonregisom}, we can define a morphism $f_i: M_i \to \tau^{-\delta}N_i$ which is a composition of irreducible morphisms (arrows) in the AR quiver which goes $\delta -n_2 +m_2\geq 0$ steps diagonally up and then $\delta -n_1 +m_1\geq 0$ steps diagonally down. This morphism is zero if $ \delta-n_1 +m_1 \geq \ell(M_i)$ and non-zero otherwise.
    By a symmetric argument, we can construct a morphism $g_i:N_i \to \tau^{-\delta}M_i$ which goes $\delta -m_2 +n_2\geq 0$ steps diagonally up and then $\delta -m_1 +n_1\geq 0$ steps diagonally down. Then using the mesh relations it follows that $\tau^{-\delta}g_i \circ f_i$ is the morphism obtained by composing irreducible morphisms that go $2\delta$ steps diagonally up and $2\delta$ steps diagonally down in the AR quiver. By definition and the mesh relations, this is precisely $\Phi_{M_i}^{2\delta}$.     
    Similarly,  $\tau^{-\delta}f_i \circ g_i = \Phi_{N_i}^{2\delta}$. 
    Note that for any summand $M_0$ of $M$ such that $\underline{\gamma}_{M_0} \notin \operatorname{Coim}\eta$ then $\ell(M_0) \leq 2\delta$ and so $\Phi_{M_0}^{2\delta} = 0$. For any summand $N_0$ of $N$ such that $\underline{\gamma}_{N_0} \notin \operatorname{Im}\eta$ then $\ell(N_0) \leq 2\delta$ and so $\Phi_{N_0}^{2\delta} = 0$. We can then define morphisms $f: M \to N$ and $g: N \to M$ as 
\[
\begin{pmatrix}
\begin{matrix}
f_1 & 0 & \cdots & 0 \\
0 & f_2 & \cdots & 0 \\
\vdots & \vdots & \ddots & \vdots \\
0 & 0 & \cdots & f_n
\end{matrix} 
& \vline \;
\mathbf{\underline{0}} \\
\hline
\mathbf{\underline{0}} & \vline \;
\mathbf{\underline{0}} \\
\end{pmatrix} \qquad  \text{and }\qquad
\begin{pmatrix}
\begin{matrix}
g_1 & 0 & \cdots & 0 \\
0 & g_2 & \cdots & 0 \\
\vdots & \vdots & \ddots & \vdots \\
0 & 0 & \cdots & g_n
\end{matrix} 
& \vline \;
\mathbf{\underline{0}} \\
\hline
\mathbf{\underline{0}} & \vline \;
\mathbf{\underline{0}} \\
\end{pmatrix}
\]
    where $\mathbf{\underline{0}}$ denotes a zero matrix. A short calculation then shows that $M$ and $N$ are $\delta$-interleaved with respect to $f$ and $g$.

    \end{proof}

Putting together the previous results, we have proved the following theorem.
\begin{theorem}[Isometry Theorem] \label{isometry} Suppose $M,N$ are persistence modules of type $\tilde{A}$. Then 
\[ d_I(M,N) = d_B(\mathcal{B}(M), \mathcal{B}(N)).\]
    
\end{theorem}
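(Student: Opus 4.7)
The plan is to reduce the isometry theorem to the component-wise results already established in Sections~\ref{StabilityNonreg} and~\ref{StabilityReg}. The key observation is that both $d_I$ and $d_B$ respect the decomposition of persistence modules and their barcodes according to the connected components of the Auslander-Reiten quiver of $D^b(\mod{\mathbf{k}Q})$. By Krull-Schmidt, any persistence module splits canonically as $M = M_P \oplus M_R \oplus M_I$; Lemma~\ref{componentwise} together with its corollary gives
\[d_I(M,N) = \max\{d_I(M_P,N_P),\; d_I(M_R,N_R),\; d_I(M_I,N_I)\},\]
and the analogous proposition for barcodes gives the same formula for $d_B$.

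First, I would refine this decomposition on the regular part. Since the regular component is a disjoint union of pairwise $\Hom$-orthogonal tubes (two exceptional tubes of ranks $p,q$ and a $\mathbf{k}^*$-family of homogeneous tubes), the block-triangular matrix argument used in the proof of Lemma~\ref{componentwise} applies verbatim: the off-diagonal blocks vanish, and $\Phi^{2\delta}$ restricts tube-by-tube. Hence $d_I(M_R,N_R) = \max_{\mathcal{T}} d_I(M_\mathcal{T}, N_\mathcal{T})$, and similarly for $d_B$ using the fact that the induced matching and $\delta$-equivalence cannot cross tubes. This reduces the problem to the case where all indecomposable summands lie in a single non-regular component, or in a single tube.

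Next, I would invoke the component-wise isometries proved earlier. In the non-regular case, the inequality $d_I \geq d_B$ follows from the perfect matching on $G(M,N)$ produced by the Hall's marriage argument of Lemma~\ref{Halls}, while $d_I \leq d_B$ is exactly Proposition~\ref{nonregisom}. In the tube case, $d_I \geq d_B$ follows from the induced matching $\theta_\delta \circ \eta_f$ constructed via the canonical injections of Section~\ref{caninj} and the diagram chase through $\operatorname{Im} f$, and $d_I \leq d_B$ follows from the construction of interleaving morphisms $f_i, g_i$ as compositions of irreducible arrows around the AR-mesh. Taking maxima on both sides then yields $d_I(M,N) = d_B(\mathcal{B}(M), \mathcal{B}(N))$.

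The subtlety, rather than a genuine obstacle, lies in bookkeeping: one must verify that the tube-by-tube $\delta$-matchings assemble into a single global $\delta$-matching of barcodes, and dually that tube-by-tube interleaving morphisms assemble into block-diagonal morphisms $f, g$ satisfying Definition~\ref{deltainterleaved}. This works cleanly because matchings on disjoint pieces of a multiset literally disjoint-union, and because $\tau^{-\delta}$ and $\Phi^{2\delta}$ act block-diagonally with respect to the tube decomposition, so compositions of block-diagonal morphisms remain block-diagonal with the expected $\Phi^{2\delta}$ entries on the diagonal. Once this is spelled out, the isometry follows immediately from the maximum formulas and the component-wise equalities.
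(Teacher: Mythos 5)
Your proposal is correct and follows essentially the same route as the paper: reduce to components via the $\Hom$-orthogonality arguments and the maximum formulas, further refine the regular part tube-by-tube, and then invoke the component-wise isometries from Sections~\ref{StabilityNonreg} and~\ref{StabilityReg}. Your write-up is, if anything, more explicit than the paper's own one-line assembly ("Putting together the previous results\ldots"), particularly in spelling out that the block-diagonal structure of the interleaving morphisms and the disjoint-union assembly of matchings are exactly what make the reduction work, but the underlying argument is the same.
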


\section{Comparison with Prior Results on Circle-Valued Persistence}\label{BurgheleaBackground}

In this section, we briefly review the level set construction of circle-valued persistence modules from \cite{Haller} and provide a comparative dictionary relating the barcodes and Jordan blocks defined there with those introduced in this paper.

We consider circle-valued maps on topological spaces \( X \), where \( X \) is assumed to be a compact absolute neighborhood retract (ANR), a class that includes simplicial complexes. We impose conditions on the map \( f \), analogous to those required of scalar-valued Morse functions. Specifically, we say that a map \( f: X \to S^1 \) is \emph{tame} if:
\begin{itemize}
    \item every fiber \( X_\theta = f^{-1}(\theta) \) is a deformation retract of some open neighborhood; and
    \item away from a finite set of angles \( \Sigma = \{\theta_1, \theta_2, \dots, \theta_r\} \subset S^1 \), the restriction of $ f$ to ${X \setminus f^{-1}(\Sigma)} $ is a fibration.
\end{itemize}

Given a tame, continuous circle-valued map \( f: X \to S^1 \), we consider its infinite cyclic cover \( \tilde{f}: \tilde{X} \to \mathbb{R} \). This lifted map behaves analogously to a real-valued Morse function, allowing for the definition of critical points, indices, and gradient flow in the usual way.

Under the tameness assumptions, there exists a finite set of angles \( 0 < \theta_1 \leq \dots \leq \theta_r \leq 2\pi \) at which the homotopy type of the fibers changes. These are the \emph{critical values} of \( f \). We choose regular values \( 0 < t_1 < t_2 < \dots < t_r \), such that \( t_1 < \theta_1 \) and \( \theta_{i-1} < t_i < \theta_i \) for each \( i = 2, \dots, r \). The corresponding \emph{singular fibers} are \( X_i = f^{-1}(\theta_i) \), and the \emph{regular fibers} are \( R_i = f^{-1}(t_i) \). There are continuous maps between these spaces that are well-defined up to homotopy equivalence.

\[
\begin{tikzcd}[cramped, sep=tiny]
	&&& R_1 \\
	& X_r &&&& X_1 \\
	\\
	R_r &&&&&& R_2 \\
	\\
	& X_{r-1} &&&& X_2 \\
	&& {} & \dots & {}
	\arrow["{b_r}"', from=1-4, to=2-2]
	\arrow["{a_1}", from=1-4, to=2-6]
	\arrow["{a_r}", from=4-1, to=2-2]
	\arrow["{b_{r-1}}"', from=4-1, to=6-2]
	\arrow["{b_1}"', from=4-7, to=2-6]
	\arrow["{a_2}", from=4-7, to=6-6]
	\arrow[from=7-3, to=6-2]
	\arrow[from=7-5, to=6-6]
\end{tikzcd}
\]

Taking \( k \)-dimensional homology (for any \( k \geq 0 \)) yields a representation of a quiver \( Q \), specifically an \( \tilde{A}_{2r} \) quiver with a zigzag orientation. The linear maps in the representation are induced by the continuous maps \( a_i \) and \( b_i \).

\vspace{0.5em}
\noindent In \cite{Haller}, the authors define two main types of invariants for circle-valued maps:

\begin{enumerate}
    \item \textbf{Barcodes}: These are finite intervals \( I \subset \mathbb{R} \), classified into four types:
    \begin{itemize}
        \item Closed intervals \( [a,b] \), with \( a \leq b \) ;
        \item Open intervals \( (a,b) \), with \( a < b \);
        \item Left-open, right-closed intervals \( (a,b] \), with \( a < b \);
        \item Left-closed, right-open intervals \( [a,b) \), with \( a < b \).
    \end{itemize}
    
    \item \textbf{Jordan blocks}: These are pairs \( (\lambda, k) \), where \( \lambda \in \mathbf{k} \setminus \{0\} \) and \( k \in \mathbb{Z}_{>0} \). Each Jordan block corresponds to the matrix:
    \[
    \begin{pmatrix}
    \lambda & 1 & 0 & \cdots & 0\\
    0 & \lambda & 1 & \cdots & 0\\
    \vdots & \ddots & \ddots & \ddots & \vdots \\
    0 & \cdots & 0 & \lambda & 1\\
    0 & \cdots & 0 & 0 & \lambda
    \end{pmatrix}
    \]
\end{enumerate}

\noindent These invariants correspond to the indecomposable representations of $Q$, the \( \tilde{A}_{2r} \) type quiver, in the following way. Let \( \tilde{Q} \) be the doubly infinite zigzag quiver of type \( A \) which is the universal cover of \( Q \). We consider representations of \( \tilde{Q} \) that are either \emph{periodic} or have \emph{finite support}, referred to as {`good' representations} in \cite{Haller}. These induce representations of \( Q \).

An indecomposable representation of \( Q \) is of \emph{interval type} if it arises from a finitely supported \( \tilde{Q} \)-representation. It corresponds to a:
\begin{itemize} 
    \item \emph{closed interval} if its support lies between two singular vertices;
    \item \emph{open interval} if its support lies between two regular vertices;
    \item \emph{half-open interval} if its support lies between a regular and a singular vertex.
\end{itemize}
Periodic representations of \( \tilde{Q} \) descend to regular representations of \( Q \) and correspond to Jordan blocks.

These correspond to arcs and closed curves on the geometric model as follows.

\begin{table}[h]
\centering
\begin{tabular}{l|l|l}
\textbf{Object from \cite{Haller}} & \textbf{Location in AR Quiver} & \textbf{Object in Geometric Model} \\ \hline
Closed interval                    & Preprojective component         & Arc between different boundary components \\
Open interval                      & Preinjective component          & Arc between different boundary components \\
Half-open interval                 & Rank \( p \) or \( q \) tube     & Arc between the same boundary component \\
Jordan block \( (\lambda, l) \)    & Homogeneous tube                & \( (\underline{\gamma}, \lambda, l) \) for closed curve \( \gamma \)
\end{tabular}
\end{table}
\noindent Note that the arcs corresponding to objects in the preprojective and preinjective components are distinguished by their gradings.

\section{Examples} \label{Examples}

In this section, we present some simple examples that illustrate the theory that we have developed. We highlight some examples of circle-valued maps that are finite distances apart and give some that are infinitely far apart.
\subsection{Winding number}

Let us consider the following two maps $X \to S^1$ to the circle. On the left, the space $X$ consists of two circles, while on the right, the space $X$ consists of one circle, but the map has winding number $2$. Neither of the maps has a singular value, but we will treat $s$ as a singular value, and $t$ as a regular value. Thus, in both cases, we will be dealing with representations of the Kronecker quiver.

\definecolor{ccqqqq}{rgb}{0.8,0,0}
\begin{center}
\begin{tikzpicture}

\draw [line width=1.2pt] (-7,0) circle (1cm);
\draw [line width=1pt,color=ccqqqq] (-7,0) circle (1.7cm);
\draw [line width=1pt,color=ccqqqq] (-7,0) circle (2cm);

\draw [fill] (-8,0) circle (2.5pt);
\draw (-8.2,0) node {$t$};
\draw [fill] (-6, 0) circle (2.5pt);
\draw (-5.8,0) node {$s$};

\draw [line width=1.2pt] (0,0) circle (1cm);
\draw [line width=1pt,color=ccqqqq] (0,0.3) circle (1.7cm);
\draw [line width=1pt,color=ccqqqq] (0,0) circle (2cm);

\draw [fill] (-1,0) circle (2.5pt);
\draw (-1.2,0) node {$t$};
\draw [fill] (1, 0) circle (2.5pt);
\draw (1.2,0) node {$s$};

\end{tikzpicture}
\end{center}

We write down the representations $M_L$ and $M_R$ over $\mathbf{k}$ that are the zero-dimensional circle-valued persistence modules in the two examples:

\begin{minipage}{0.5\textwidth}
\[
\begin{tikzcd}
    \mathbf{k}^2 \arrow[r, shift left=1ex, "\begin{bmatrix} 1 \temp 0 \\ 0 \temp 1 \end{bmatrix}"{above}]
        \arrow[r, shift right=1ex, "\begin{bmatrix} 1 \temp 0 \\ 0 \temp 1 \end{bmatrix}"{below}]
    & \mathbf{k}^2
\end{tikzcd}
\]
\end{minipage}%
\begin{minipage}{0.5\textwidth}
\[
\begin{tikzcd}
    \mathbf{k}^2 \arrow[r, shift left=1ex, "\begin{bmatrix} 0 \temp 1 \\ 1 \temp 0 \end{bmatrix}"{above}]
        \arrow[r, shift right=1ex, "\begin{bmatrix} 1 \temp 0 \\ 0 \temp 1 \end{bmatrix}"{below}]
    & \mathbf{k}^2
\end{tikzcd}
\]
\end{minipage}
Both representations decompose into the sum of two indecomposable summands:

\begin{minipage}{0.5\textwidth}
\[\left( \begin{tikzcd}
    \mathbf{k} \arrow[r, shift left=1ex, "1"{above}]
        \arrow[r, shift right=1ex, "1"{below}]
    & \mathbf{k}
\end{tikzcd} \right) \bigoplus \left( \begin{tikzcd}
    \mathbf{k} \arrow[r, shift left=1ex, "1"{above}]
        \arrow[r, shift right=1ex, "1"{below}]
    & \mathbf{k}
\end{tikzcd} \right) \]
\end{minipage}%
\begin{minipage}{0.5\textwidth}
\[
\left( \begin{tikzcd}
    \mathbf{k} \arrow[r, shift left=1ex, "1"{above}]
        \arrow[r, shift right=1ex, "1"{below}]
    & \mathbf{k}
\end{tikzcd} \right) \bigoplus \left( \begin{tikzcd}
    \mathbf{k} \arrow[r, shift left=1ex, "-1"{above}]
        \arrow[r, shift right=1ex, "1"{below}]
    & \mathbf{k}
\end{tikzcd} \right)
\]
\end{minipage}

\noindent All four summands are regular representations, and they all have length $1$ and sit at the mouths of rank $1$ tubes. The corresponding barcodes both contain band objects; in the left-hand example this is two copies of $(\underline{\gamma},1,1)$, while the right-hand example has $(\underline{\gamma},1,1)$ and $(\underline{\gamma},-1,1)$, where $\gamma$ is the closed curve on the annulus. It is clear that $M_L$ and $M_R$ are not isomorphic since their second summands are not - they lie in different tubes. Therefore $M_L$ and $M_R$ are not $0$-interleaved and their corresponding barcodes are not $0$-matched.

Since all summands of $M_L$ and $M_R$ are at the mouths of tubes, $\Phi_{M_L}^2=0=\Phi_{M_R}^2$. It follows that $M_L$ and $M_R$ are $1$-interleaved with respect to the zero morphisms $M_L \stackrel{0}{\to} \tau^{-1}M_R$ and $M_R \stackrel{0}{\to} \tau^{-1}M_L$. Therefore $ d_I(M_L,M_R) =1$. 

We can also calculate the bottleneck distance. We check that there is a $1$-matching. Since no objects in either barcode have length $>2$ it follows that the conditions on $\operatorname{Coim} \eta$ and $\operatorname{Im} \eta$ are empty. Therefore, one possible choice of matching $\eta$ is the zero map. The other condition is then also empty, and so a $1$-matching trivially exists. It follows that $ d_B(\mathcal{B}(M_L),\mathcal{B}(M_R)) =1$ as expected. Since the first summands of $M_L$ and $M_R$ are isomorphic, we could also have chosen the $\eta$ that matches these summands, which are certainly $1$-equivalent. 

Thus, we are able to compare persistence modules resulting from maps coming from two different spaces and find them to have a finite distance.

\subsection{Homotopy class}

Let $X$ be a figure-of-eight and let us consider two maps from $X$ to the circle. The first map has both loops wound around the circle, while the second only has one. Looking in homological degree $0$, we get the following representations:
\[
\begin{tikzpicture}
    \def\radius{1.5cm}

    \node[xshift=-4cm] (s1) at (90:\radius)   {$\mathbf{k}$};
    \node[xshift=-4cm] (t1) at (30:\radius)    {$\mathbf{k}^2$};
    \node[xshift=-4cm] (s2) at (-30:\radius)  {$\mathbf{k}^2$};
    \node[xshift=-4cm] (t2) at (-90:\radius) {$\mathbf{k}^2$};
    \node[xshift=-4cm] (s3) at (-150:\radius)  {$\mathbf{k}^2$};
    \node[xshift=-4cm] (t3) at (150:\radius)  {$\mathbf{k}^2$};
    
    \node[xshift=4cm] (s1b) at (90:\radius)   {$\mathbf{k}^2$};
    \node[xshift=4cm] (t1b) at (30:\radius)    {$\mathbf{k}^3$};
    \node[xshift=4cm] (s2b) at (-30:\radius)  {$\mathbf{k}^2$};
    \node[xshift=4cm] (t2b) at (-90:\radius) {$\mathbf{k}^3$};
    \node[xshift=4cm] (s3b) at (-150:\radius)  {$\mathbf{k}^2$};
    \node[xshift=4cm] (t3b) at (150:\radius)  {$\mathbf{k}$};
    \path[->,font=\small]
        (t1) edge node[right of=t1, yshift = 0.2cm] {$\begin{bmatrix} 1 \temp 1 \end{bmatrix}$} (s1)
        (t1) edge node[auto] {$Id$} (s2)
        (t2) edge node[auto] {$Id$} (s2)
        (t2) edge node[auto] {$Id$} (s3)
        (t3) edge node[auto] {$Id$} (s3)
        (t3) edge node[auto] {$\begin{bmatrix} 1 \temp 1 \end{bmatrix}$} (s1);

    \path[->,font=\small]
        (t1b) edge node[auto, right of=t1b, yshift = 0.5cm] {$\begin{bmatrix} 1 \temp 0 \temp 0 \\
                                                0\temp 1 \temp 1\end{bmatrix}$} (s1b)
        (t1b) edge node[auto] {$\begin{bmatrix} 1 \temp 1 \temp 0 \\
                                                0\temp 0 \temp 1 \end{bmatrix}$} (s2b)
        (t2b) edge node[auto, right of=t2b, yshift = -0.5cm] {$\begin{bmatrix} 1 \temp 1 \temp 0 \\
                                                0\temp 0 \temp 1 \end{bmatrix}$} (s2b)
        (t2b) edge node[auto] {$\begin{bmatrix} 1 \temp 0 \temp 0 \\
                                                0\temp 1 \temp 1\end{bmatrix}$} (s3b)
        (t3b) edge node[auto, left of=t3b] {$\begin{bmatrix} 1 \\ 0 \end{bmatrix}$} (s3b)
        (t3b) edge node[auto] {$\begin{bmatrix} 1 \\ 0 \end{bmatrix}$} (s1b);
\end{tikzpicture}
\]
Using GAP \cite{GAP}, we found that the left persistence module decomposes into indecomposable representations with the following dimensions:

\[
\begin{tikzpicture}
    \def\radius{1.5cm}  
    
    \node[xshift=-2cm] (s1) at (90:\radius)   {$0$};
    \node[xshift=-2cm] (t1) at (30:\radius)    {$\mathbf{k}$};
    \node[xshift=-2cm] (s2) at (-30:\radius)  {$\mathbf{k}$};
    \node[xshift=-2cm] (t2) at (-90:\radius) {$\mathbf{k}$};
    \node[xshift=-2cm] (s3) at (-150:\radius)  {$\mathbf{k}$};
    \node[xshift=-2cm] (t3) at (150:\radius)  {$\mathbf{k}$};
    
    \node[xshift=2cm] (s1b) at (90:\radius)   {$\mathbf{k}$};
    \node[xshift=2cm] (t1b) at (30:\radius)    {$\mathbf{k}$};
    \node[xshift=2cm] (s2b) at (-30:\radius)  {$\mathbf{k}$};
    \node[xshift=2cm] (t2b) at (-90:\radius) {$\mathbf{k}$};
    \node[xshift=2cm] (s3b) at (-150:\radius)  {$\mathbf{k}$};
    \node[xshift=2cm] (t3b) at (150:\radius)  {$\mathbf{k}$};

    \node[xshift=-1.5cm] (sum) at (0:\radius) {$\bigoplus$};
    
    \path[->,font=\small]
        (t1) edge node[right] {} (s1)
        (t1) edge node[auto] {} (s2)
        (t2) edge node[auto] {} (s2)
        (t2) edge node[auto] {} (s3)
        (t3) edge node[auto] {} (s3)
        (t3) edge node[auto] {} (s1);
    
    \path[->,font=\small]
        (t1b) edge node[auto] {} (s1b)
        (t1b) edge node[auto] {} (s2b)
        (t2b) edge node[auto] {} (s2b)
        (t2b) edge node[auto] {} (s3b)
        (t3b) edge node[auto] {} (s3b)
        (t3b) edge node[auto] {} (s1b);
\end{tikzpicture}
\]
while the right persistence module decomposes into indecomposable representations with the following dimensions:

\[
\begin{tikzpicture}[>=latex]
    \def\radius{1.5cm}  
    
    \node[xshift=-2cm] (s1) at (90:\radius)   {$0$};
    \node[xshift=-2cm] (t1) at (30:\radius)    {$0$};
    \node[xshift=-2cm] (s2) at (-30:\radius)  {$0$};
    \node[xshift=-2cm] (t2) at (-90:\radius) {$\mathbf{k}$};
    \node[xshift=-2cm] (s3) at (-150:\radius)  {$\mathbf{k}$};
    \node[xshift=-2cm] (t3) at (150:\radius)  {$0$};
    
    \node[xshift=2cm] (s1b) at (90:\radius)   {$\mathbf{k}$};
    \node[xshift=2cm] (t1b) at (30:\radius)    {$\mathbf{k}$};
    \node[xshift=2cm] (s2b) at (-30:\radius)  {$0$};
    \node[xshift=2cm] (t2b) at (-90:\radius) {$0$};
    \node[xshift=2cm] (s3b) at (-150:\radius)  {$0$};
    \node[xshift=2cm] (t3b) at (150:\radius)  {$0$};

    \node[xshift=-1.5cm] (sum1) at (0:\radius) {$\bigoplus$};

    \node[xshift=-6cm] (s1c) at (90:\radius)   {$0$};
    \node[xshift=-6cm] (t1c) at (30:\radius)    {$\mathbf{k}$};
    \node[xshift=-6cm] (s2c) at (-30:\radius)  {$\mathbf{k}$};
    \node[xshift=-6cm] (t2c) at (-90:\radius) {$\mathbf{k}$};
    \node[xshift=-6cm] (s3c) at (-150:\radius)  {$0$};
    \node[xshift=-6cm] (t3c) at (150:\radius)  {$0$};

    \node[xshift=-5.5cm] (sum2) at (0:\radius) {$\bigoplus$};

    \node[xshift=6cm] (s1d) at (90:\radius)   {$\mathbf{k}$};
    \node[xshift=6cm] (t1d) at (30:\radius)    {$\mathbf{k}$};
    \node[xshift=6cm] (s2d) at (-30:\radius)  {$\mathbf{k}$};
    \node[xshift=6cm] (t2d) at (-90:\radius) {$\mathbf{k}$};
    \node[xshift=6cm] (s3d) at (-150:\radius)  {$\mathbf{k}$};
    \node[xshift=6cm] (t3d) at (150:\radius)  {$\mathbf{k}$};

    \node[xshift=2.5cm] (sum3) at (0:\radius) {$\bigoplus$};
    
    \path[->,font=\small]
        (t1) edge node[right] {} (s1)
        (t1) edge node[auto] {} (s2)
        (t2) edge node[auto] {} (s2)
        (t2) edge node[auto] {} (s3)
        (t3) edge node[auto] {} (s3)
        (t3) edge node[auto] {} (s1);
    
    \path[->,font=\small]
        (t1b) edge node[auto] {} (s1b)
        (t1b) edge node[auto] {} (s2b)
        (t2b) edge node[auto] {} (s2b)
        (t2b) edge node[auto] {} (s3b)
        (t3b) edge node[auto] {} (s3b)
        (t3b) edge node[auto] {} (s1b);

        \path[->,font=\small]
        (t1c) edge node[auto] {} (s1c)
        (t1c) edge node[auto] {} (s2c)
        (t2c) edge node[auto] {} (s2c)
        (t2c) edge node[auto] {} (s3c)
        (t3c) edge node[auto] {} (s3c)
        (t3c) edge node[auto] {} (s1c);

        \path[->,font=\small]
        (t1d) edge node[auto] {} (s1d)
        (t1d) edge node[auto] {} (s2d)
        (t2d) edge node[auto] {} (s2d)
        (t2d) edge node[auto] {} (s3d)
        (t3d) edge node[auto] {} (s3d)
        (t3d) edge node[auto] {} (s1d);
\end{tikzpicture}
\]

An explicit calculation of the dimension vectors of indecomposable representations shows that the leftmost components in each case are preinjective and can be matched to each other. The remaining representations are all in the regular components. In particular, they all have finite length and so can potentially be matched to zero. As a consequence, we deduce that the interleaving distance is finite.
Thus, it is possible that two maps from the same space $X$ with different cohomology classes can induce persistence modules which are a finite distance apart.

\subsection{An example of an infinite interleaving distance}
Let us compare the identity map from the circle to the circle with the following map, with its associated persistence module:

\
\[
\begin{tikzpicture}
\draw [line width=1pt, color=ccqqqq] (0,0) circle (2cm);
\draw [shift={(0.3,0)}, line width=1pt, color=ccqqqq]  plot[domain=-1.5:1.5,variable=\t]({2*cos(\t r)},{2*sin(\t r)});
\draw [line width=1pt] (0,0) circle (1cm);

\draw [fill] (-1,0) circle (2pt);
\draw (-1.4,0) node {$t1$};
\draw [fill] (1,0) circle (2pt);
\draw (1.4,0) node {$t2$};

\draw [fill] (0.25,0.95) circle (2pt);
\draw (0.25,1.2) node {$s1$};
\draw [fill] (0.25,-0.95) circle (2pt);
\draw (0.25,-1.2) node {$s2$};

\draw [fill, color=ccqqqq] (0.4,1.98) circle (2pt);
\draw [fill, color=ccqqqq] (0.4,-1.98) circle (2pt);

  \def\radius{1cm}  
    
    \node[xshift=7cm] (s1) at (90:\radius)   {$\mathbf{k}$};
    \node[xshift=7cm] (t2) at (0:\radius)    {$\mathbf{k}^2$};
    \node[xshift=7cm] (s2) at (-90:\radius)  {$\mathbf{k}$};
    \node[xshift=7cm] (t1) at (180:\radius) {$\mathbf{k}$};
  
    \path[->,font=\small]
        (t1) edge node[right] {$1$} (s1)
        (t1) edge node[auto] {$1$} (s2)
        (t2) edge node[auto] {$\begin{bmatrix} 1 \temp 1 \end{bmatrix}$} (s2)
        (t2) edge node[auto, right of=t2, yshift = 0.3cm] {$\begin{bmatrix} 1 \temp 1 \end{bmatrix}$} (s1);

\end{tikzpicture}
\]
This persistence module decomposes into the following summands:

\
\[
\begin{tikzpicture}

  \def\radius{1cm}  
    
    \node[xshift=-1.5cm] (s1) at (90:\radius)   {$\mathbf{k}$};
    \node[xshift=-1.5cm] (t2) at (0:\radius)    {$\mathbf{k}$};
    \node[xshift=-1.5cm] (s2) at (-90:\radius)  {$\mathbf{k}$};
    \node[xshift=-1.5cm] (t1) at (180:\radius) {$\mathbf{k}$};

    \node[xshift=-1cm] (sum) at (0:\radius) {$\bigoplus$};

    \node[xshift=1.5cm] (s1a) at (90:\radius)   {$0$};
    \node[xshift=1.5cm] (t2a) at (0:\radius)    {$\mathbf{k}$};
    \node[xshift=1.5cm] (s2a) at (-90:\radius)  {$0$};
    \node[xshift=1.5cm] (t1a) at (180:\radius) {$0$};
  
    \path[->,font=\small]
        (t1) edge node[right] {} (s1)
        (t1) edge node[auto] {} (s2)
        (t2) edge node[auto] {} (s2)
        (t2) edge node[auto] {} (s1);

    \path[->,font=\small]
        (t1a) edge node[right] {} (s1a)
        (t1a) edge node[auto] {} (s2a)
        (t2a) edge node[auto] {} (s2a)
        (t2a) edge node[auto] {} (s1a);

\end{tikzpicture}
\]
While the first summand would match up with the representation of the identity map, the second summand is an injective module, and hence infinitely far away from the zero module. This can also be seen from the corresponding barcode, as the arc corresponding to this module has endpoints on both boundary components and therefore cannot be trivalised by moving these end points along the boundaries. Thus, the persistence modules corresponding to these two maps are infinitely far from each other. Note that the spaces involved are different.

\addtocontents{toc}{\protect{\setcounter{tocdepth}{-1}}}

\enlargethispage{3ex}
\bigskip
\noindent

\smallskip
\noindent
\resizebox{\textwidth}{!}{%
\begin{tabular}{@{}l@{}}
Nathan Broomhead, Mathematical Sciences, University of Plymouth, Drake Circus, Plymouth, PL4 8AA, United Kingdom \\
Mariam Pirashvili, Mathematical Sciences, University of Plymouth, Drake Circus, Plymouth, PL4 8AA, United Kingdom 
\end{tabular}
}
\end{document}